\documentclass[a4paper, 11pt]{article}
\addtolength{\hoffset}{-1cm}
\addtolength{\textwidth}{2cm}
\usepackage[T1]{fontenc}
\usepackage[utf8]{inputenc}
\usepackage[english]{babel}
\usepackage{amsmath}
\usepackage{amssymb}
\usepackage{amsfonts}
\usepackage{amsthm}
\usepackage{enumitem}
\usepackage{mathtools}
\usepackage{csquotes}
\usepackage{hyperref}

\numberwithin{equation}{section}
\newtheorem{theorem}{Theorem}[section]
\newtheorem{lemma}[theorem]{Lemma}

\newtheorem{proposition}[theorem]{Proposition}
\theoremstyle{definition}

\newtheorem{remark}[theorem]{Remark}

\newcommand{\per}{\mathrm{per}}
\newcommand{\e}{\mathrm{e}}
\newcommand{\R}{\mathbb R}
\newcommand{\C}{\mathbb C}

\newcommand{\N}{\mathbb N}

\newcommand{\Om}{\mathcal O}
\newcommand{\SL}{\mathcal S}
\newcommand{\LL}{\mathcal L}
\newcommand{\T}{\mathcal T}
\newcommand{\M}{\mathcal M}
\newcommand{\K}{\mathcal K}
\newcommand{\Pro}{\mathcal P}
\newcommand{\A}{\mathcal A}
\newcommand{\U}{\mathcal U}
\newcommand{\Ch}{\mathcal C_h^L}

\DeclareMathOperator{\im}{im}

\allowdisplaybreaks

\begin{document}
	
\title{Global bifurcation of capillary-gravity water waves with overhanging profiles and arbitrary vorticity}
\date{}
\author{Erik Wahlén\footnotemark[1] \and Jörg Weber\footnotemark[2]}
\renewcommand{\thefootnote}{\fnsymbol{footnote}}
\footnotetext[1]{Centre for Mathematical Sciences, Lund University, 221 00 Lund, Sweden; \tt\href{mailto:erik.wahlen@math.lu.se}{erik.wahlen@math.lu.se}}
\footnotetext[2]{Centre for Mathematical Sciences, Lund University, 221 00 Lund, Sweden; \tt\href{mailto:jorg.weber@math.lu.se}{jorg.weber@math.lu.se}}

\maketitle

\begin{abstract}
	We study two-dimensional periodic capillary-gravity water waves propagating at the free surface of water in a flow with arbitrary, prescribed vorticity over a flat bed. Using conformal mappings and a new Babenko-type reformulation of Bernoulli's equation, the problem is equivalently cast into the form ``identity plus compact'', which is amenable to Rabinowitz' global bifurcation theorem, while no restrictions on the geometry of the surface profile and no assumptions regarding the absence of stagnation points in the flow have to be made. Within the scope of this new formulation, local and global solution curves, bifurcating from laminar flows with a flat surface, are constructed.
\end{abstract}

\section{Introduction}
In this paper we construct two-dimensional periodic steady water waves of large amplitude propagating in an inviscid, incompressible, and homogeneous fluid with a flat bed. The waves we consider are capillary-gravity waves, that is, they are under the influence of both gravity and surface tension. While many papers have been devoted to the construction of water waves of small/large amplitude by means of local/global bifurcation arguments, the novelty of this paper is that our approach is very general and accounts for \textit{all} of the following features:

First, our waves are rotational. The vorticity function can be arbitrary, as long as a very mild regularity assumption (to be specified later) is satisfied. While the mathematical study of certain steady water-wave problems has an extensive history, most of the literature of the previous century dealt with irrotational flows, which enjoy the advantage of being thoroughly treatable by tools of complex analysis, since the water velocity can be written as the gradient of a harmonic potential. For surveys on irrotational flows, we refer to \cite{Groves04,Toland96}. However, there are many situations in which it is necessary to take vorticity into account, for example, in the presence of underlying non-uniform currents.

Second, our waves can have interior stagnation points and arbitrarily many critical layers. In particular, we do not assume that the stream function is strictly monotone with respect to the vertical coordinate. It was this assumption, giving rise to the semi-hodograph transformation of Dubreil-Jacotin \cite{Dubreil34}, that Constantin and Strauss utilised to construct large steady periodic water waves with vorticity for the first time in their breakthrough paper \cite{ConsStr04}, which neglects surface tension. As for taking into account capillary effects and/or stratification, we mention \cite{Wahlen06b,Wahlen06a} and \cite{EKLM20,Walsh09,Walsh14a,Walsh14b}, respectively. However, in the presence of stagnation points, such a transformation is no longer justified and the theory breaks down.

Third, our waves can have overturning profiles. Such waves are known to exist in the irrotational context with surface tension \cite{AkersAmbroseWright14, Crapper57, deBoeck14b, Kinnersley76} and in the case of constant vorticity without surface tension \cite{HurWheeler20, HurWheeler21}, and can therefore also be expected in the present setting.
 In order to overcome the inherited downsides of the above-mentioned semi-hodograph transformation and to allow for stagnation points and critical layers, in many papers a naive flattening transform, where on each vertical ray the vertical coordinate is scaled to a constant, was used. We like to mention \cite{AasVar18,EhrnEschWahl11,EhrnVill08,EhrnWahl15,HenMat13,KozKuz14,Varholm20,Wahlen09} at this point. The immediate disadvantage of this naive flattening is that for this the surface profile needs to be a graph, thus precluding overhanging waves. We therefore make use of a flattening via a conformal mapping. 
This is classical in the irrotational context and can  be used to reformulate the problem either as a singular integral equation for the tangent angle of the free surface \cite{Groves04, Toland96} or as a nonlinear pseudodifferential equation for the surface elevation in the new variables introduced by Babenko \cite{Babenko87b} (see also \cite{Groves04} and reference therein). The latter approach was extended to constant vorticity by Constantin and V\u{a}rv\u{a}ruc\u{a} in \cite{ConsVarva11} and then utilised, together with Strauss, in \cite{ConsStrVarv16} to obtain pure gravity water waves of large amplitude allowing for overhanging surface profiles. 
%A corresponding numerical treatment can be found in \cite{DyaHur19b,DyaHur19a}. 
We also like to mention \cite{Martin13}, where, in the same spirit, local bifurcation for capillary-gravity waves with constant vorticity was investigated.

Fourth, we construct steady water waves of large amplitude. In order to do this, the usual strategy is to apply a global bifurcation theorem. First, degree theoretic methods using the Healey--Simpson degree and Kielhöfer degree, respectively, were used in \cite{ConsStr04} and \cite{Walsh14b}, respectively, albeit relying on the semi-hodograph transform in order to prove admissibility of the nonlinear operator. Second, analytic global bifurcation, going back to Dancer \cite{Dancer73} and Buffoni and Toland \cite{BuffTol03}, was, for example, used in \cite{ConsStrVarv16,Varholm20} to construct a global continuum of solutions, and has the advantage of producing a curve of solutions admitting locally an analytic reparametrisation. However, to this end the occurring nonlinear operators have to be analytic; this, in turn, requires that the vorticity function be analytic unless the semi-hodograph transform has been applied in the first place. Third, if one wants to relax this assumption on analyticity and to apply the global bifurcation theorem of Rabinowitz \cite{Rabinowitz71,Kielhoefer}, the operator equation needs to have the form \enquote{identity plus compact}. Therefore, the water wave problem has to be reformulated suitably, and at exactly this point the presence of surface tension is crucial. In this spirit, the strategy of \cite{HenMat14,Matioc14} was to invert the curvature operator in order to \enquote{gain a derivative}; however, restricted to the case where the surface profile is a graph, and where the vorticity function satisfies a monotonicity assumption, thus giving rise to a reformulation via reduction to the boundary. We also mention \cite{AmbroseStraussWright16} where a similar strategy is used for vortex sheets in the irrotational context, allowing for overhanging waves and using a reformulation as a nonlinear singular integral equation in terms of the tangent angle.
It is this general approach to global bifurcation that we will pursue in the present work. However, when using conformal mappings in order to allow for overhanging waves and leaving the vorticity function general, such a reformulation becomes more involved. To get hands on this and still gain a derivative, in Lemma \ref{lma:Reformulation}, which lies somewhat at the heart of the present work, a Babenko-type reformulation of Bernoulli's equation is introduced.

Our paper is organised as follows: In Section \ref{sec:Preliminaries}, we state the problem, some preliminary tools, the functional analytic setting, and our reformulation. The justification of this reformulation is the content of Section \ref{sec:Reformulation}. Having the new operator equation at hand, we proceed in Section \ref{sec:LocalBifurcation} with the investigation of local bifurcation. To this end, we make use of the so-called good unknown and characterise the kernel and the range of the linearisation and, moreover, the transversality condition; see Lemmas \ref{lma:kernel} and \ref{lma:transversality_condition}. This characterisation, though not explicit, is complete under the assumption \eqref{ass:SL-spectrum} on the Dirichlet spectrum of a certain Sturm--Liouville operator. A corresponding result on local bifurcation is stated in Theorem \ref{thm:LocalBifurcation}. Afterwards, in order to cast a different light on the condition for local bifurcation obtained in Section \ref{sec:LocalBifurcation}, we first make some general conclusions in Section \ref{sec:general_conclusions}, and then show in Section \ref{sec:ConditionsLocalBifurcation_unidirectional} that the dispersion relation is equivalent to a certain Sturm--Liouville eigenvalue problem having a nontrivial solution, assuming that the semi-hodograph transformation can be applied at a trivial solution from which the curve shall bifurcate. Furthermore, we illustrate our condition with examples for the vorticity function, namely, constant and affine vorticity; see Section \ref{sec:ConditionsLocalBifurcation_examples}. Finally, we construct in Section \ref{sec:GlobalBifurcation} large amplitude solutions, applying the global bifurcation theorem of Rabinowitz, and investigate which alternatives for the global solution curve can occur; see Theorem \ref{thm:GlobalBifurcation}.

\section{Statement of the problem and preliminaries}\label{sec:Preliminaries}
Throughout this paper, for a given open set $\Omega\subset\R^n$, $n\in\N\coloneqq\{1,2,\ldots\}$, and parameters $k\in\N_0\coloneqq\N\cup\{0\}$, $0<\alpha\le 1$, we denote by $C^{k,\alpha}(\overline\Omega)$ the usual Hölder space (Lipschitz space if $\alpha=1$), that is, the space of functions on $\Omega$ having derivatives up to order $k$ such that all derivatives of order $k$ are Hölder continuous with parameter $\alpha$. These spaces are equipped with the usual norm
\[\|f\|_{C^{k,\alpha}(\overline\Omega)}=\sum_{|\beta|<k}\sup_\Omega |D^\beta f|+\sum_{|\beta|=k}\sup_{x,y\in\Omega,x\ne y}\frac{|D^\beta f(x)-D^\beta f(y)|}{|x-y|^\alpha}.\]
The index \enquote{loc} in, for example, $C_{\text{loc}}^{k,\alpha}(\overline\Omega)$ indicates that such functions are locally of class $C^{k,\alpha}$, that is, elements of $C^{k,\alpha}(\overline{\Omega'})$ for all open, bounded sets $\Omega'\subset\R^n$ such that $\overline{\Omega'}\subset\Omega$. Furthermore, for $\Omega$ and $k$ as above and $1\le p<\infty$, $W^{k,p}(\Omega)$ denotes the usual Sobolev space (Lebesgue space if $k=0$) equipped with the standard norm
\[\|f\|_{W^{k,p}(\Omega)}=\left(\sum_{|\beta|\le k}\int_\Omega|D^\beta f|^p\right)^{1/p}.\]
We use the usual abbreviations $L^p=W^{0,p}$ and $H^k=W^{k,2}$. Henceforth, $0<\alpha<1$ is arbitrary and fixed. Moreover, we write $\SL$ for the operator evaluating functions $f$, which are defined (at least) for $(x,y)\in\R\times\{0\}$, at $y=0$, that is, $\SL f$ is the function $x\mapsto f(x,0)$. Furthermore, we denote (partial) derivatives by lower indices; for example, $f_x$ is the (partial) derivative of $f$ with respect to $x$.

The equations (cf. \cite{Wahlen06b}, for example) we want to solve are
\begin{subequations}\label{eq:OriginalEquations}
	\begin{align}
	\Delta\psi&=-\gamma(\psi)&\text{in }\Omega,\label{eq:OriginalEquations_Poisson}\\
	\frac{|\nabla\psi|^2}{2}-\sigma\kappa+g(y-h)&=Q&\text{on }S\label{eq:OriginalEquations_Bernoulli},\\
	\psi&=0&\text{on }S,\label{eq:OriginalEquations_KinematicTop}\\
	\psi&=-m&\text{on }y=0\label{eq:OriginalEquations_KinematicBottom}.
	\end{align}
\end{subequations}
Here, $\psi$ is the stream function, which is assumed to be $L$-periodic in $x$ (where the period $L>0$ is fixed; we abbreviate $\nu\coloneqq 2\pi/L$) and which provides the velocity field $(\psi_y,-\psi_x)$ in a frame moving at a constant wave speed. Moreover, $\gamma$ is the vorticity function satisfying
\[\gamma\in C_{\text{loc}}^{2,1}(\R),\quad\|\gamma'\|_\infty<\infty,\]
$g>0$ is the gravitational constant, $\sigma>0$ is the constant coefficient of surface tension, $\Omega$ is the a priori unknown $L$-periodic fluid domain bounded by the free surface $S$ and flat bed $\R\times\{0\}$, $\kappa$ is the mean curvature of $S$, $h>0$ is the conformal mean depth of $\Omega$, and $Q,m\in\R$ are constants; in physical language, $Q$ is related to the hydraulic head $E$ and the (constant) atmospheric pressure $p_{\mathrm{atm}}$ via $Q=E-p_{\mathrm{atm}}-gh$, and $m$ is the relative mass flux. In order to clarify the notion of conformal mean depth and to provide a very important preliminary result on conformal mappings, we pause here to state the following lemma, where $\Ch$, the $L$-periodic Hilbert transform on a strip with depth $h$ given by
\[
\Ch u(x)=-i\sum_{k\ne 0} \coth(k\nu h) \hat u_k e^{ik\nu x}
\]
for a $L$-periodic function
\[u(x)=\sum_{k\ne 0} \hat u_k e^{ik\nu x}\]
with zero average, appears first.
\begin{lemma}\label{lma:ConformalMapping}
	Let $\Omega\subset\R^2$ be a $L$-periodic strip-like domain, that is, a domain contained in the upper half plane such that its boundary consists of the real axis $\R\times\{0\}$ and a not self-intersecting curve $S=\{(u(s),v(s)):s\in\R\}$ with $u(s+L)=u(s)+L$, $v(s+L)=v(s)$, $s\in\R$. Then:
	\begin{enumerate}[label=(\roman*)]
		\item There exists a unique positive number $h$ such that there exists a conformal mapping $H=U+iV$ from the strip $\Omega_h\coloneqq\R\times(-h,0)$ to $\Omega$ which admits an extension as a homeomorphism between the closures of these domains, with $\R\times\{0\}$ being mapped onto S and $\R\times\{-h\}$
		being mapped onto $\R\times\{0\}$, and such that $U(x+L,y)=U(x,y)+L$, $V(x+L,y)=V(x,y)$, $(x,y)\in\overline{\Omega_h}$.
		\item The conformal mapping $H$ is unique up to translations in the variable $x$ (in the preimage and the image).
		\item $U$ and $V$ are (up to translations in the variable $x$) uniquely determined by $w=V(\cdot,0)-h$ as follows: $V$ is the unique ($L$-periodic) solution of
		\begin{subequations}\label{eq:BVP_for_V}
		\begin{align}
			\Delta V&=0&\mathrm{in\ }\Omega_h,\\
			V&=w+h&\mathrm{on\ }y=0,\\
			V&=0&\mathrm{on\ }y=-h,
		\end{align}
		\end{subequations}
		and $U$ is the (up to a real constant unique) harmonic conjugate of $V$. Furthermore, after a suitable horizontal translation, $S$ can be parametrised by
		\[S=\{(x+(\Ch w)(x),w(x)+h):x\in\R\}\]
		and it holds that
		\begin{align}\label{eq:nablaV_on_top}
			\SL\nabla V=(1+\Ch w',w').
		\end{align}
		\item If $S$ is of class $C^{1,\beta}$ for some $\beta>0$, then $U,V\in C^{1,\beta}(\overline{\Omega_h})$ and
		\[|dH/dz|^2=|\nabla V|^2\neq 0\mathrm{\ in\ }\overline{\Omega_h}.\]
	\end{enumerate}
\end{lemma}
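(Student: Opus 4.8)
The plan is to reduce parts (i) and (ii) to the conformal classification of doubly connected domains, to obtain the homeomorphic boundary correspondence from Carathéodory's theorem, to read off the representation in (iii) from the boundary value problem \eqref{eq:BVP_for_V} together with a Fourier computation, and to deduce (iv) from the Kellogg--Warschawski boundary regularity theory for conformal maps.

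For (i) and (ii), I would pass to the quotient by the translation group $L\Z$ --- equivalently, apply the $L$-periodic map $z\mapsto e^{i\nu z}$. Under it $\Omega$ is carried to a doubly connected planar domain $D$ whose two boundary components are the unit circle (the image of $\R\times\{0\}$) and the Jordan curve coming from one period of $S$; here one uses that $S$ is non-self-intersecting, $L$-periodic, and stays a positive distance above the real axis, which is what ``strip-like'' should be understood to include. The strip $\Omega_h$ is carried to the circular annulus $\{1<|\zeta|<e^{\nu h}\}$, whose modulus $e^{\nu h}$ increases strictly from $1$ to $\infty$ as $h$ runs over $(0,\infty)$. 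By the conformal classification of annuli, $D$ is conformally equivalent to a unique circular annulus $\{1<|\zeta|<R\}$ with $R>1$, and setting $e^{\nu h}=R$ singles out the unique conformal mean depth $h$. Such an equivalence is unique up to rotations of the annulus --- the orientation-reversing automorphism that swaps the two boundary circles is excluded once we prescribe that the unit circle correspond to $\R\times\{0\}$, and the one fixing them is excluded by requiring $U(x+L,y)=U(x,y)+L$ rather than $-L$. Undoing the substitution, a rotation corresponds exactly to a translation $x\mapsto x+\mathrm{const}$, and since $\Omega_h$ is simply connected the equivalence lifts to the sought conformal map $H$, unique up to such a translation; this settles (i) and (ii). Since $D$ and the circular annulus are ring domains bounded by Jordan curves, the conformal equivalence between them extends to a homeomorphism of the closures (the ring-domain version of Carathéodory's theorem), and pulling this back through $z\mapsto e^{i\nu z}$ yields the homeomorphic extension of $H$ with the stated periodicity. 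No regularity of $S$ is used here, consistently with regularity being assumed only in (iv). (Alternatively one can Schwarz-reflect $\Omega$ and $\Omega_h$ in their flat boundaries and argue on the doubled domains.)

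For (iii), write $H=U+iV$ with $H$ as in (i). Then $V$ is harmonic and $L$-periodic on $\Omega_h$, vanishes on $\{y=-h\}$ (the preimage of the flat bed, which is $\{V=0\}$ in the image), and on $\{y=0\}$ equals $V(\cdot,0)$; with $w\coloneqq V(\cdot,0)-h$ this is precisely \eqref{eq:BVP_for_V}, which has a unique bounded $L$-periodic solution by the maximum principle, boundedness being supplied by periodicity. Since $\Omega_h$ is simply connected, $V$ has a harmonic conjugate $U$, unique up to an additive constant; imposing $U(x+L,y)=U(x,y)+L$ forces the $x$-mean of $U_x=V_y$ to equal $1$, equivalently the $x$-mean of $V(\cdot,0)$ to equal $h$, i.e. $w$ to have zero mean --- this is exactly the property defining the conformal mean depth, and it pins down the additive constant up to the horizontal-translation freedom. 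Expanding $w=\sum_{k\neq0}\hat w_k e^{ik\nu x}$, solving \eqref{eq:BVP_for_V} by separation of variables, forming the harmonic conjugate, and using $\coth\theta=1+2/(e^{2\theta}-1)$ together with the definition of $\Ch$, a direct computation gives $\SL U=\mathrm{id}+\Ch w$ (after the translation normalisation) and the boundary identity \eqref{eq:nablaV_on_top}; in particular $S=H(\{y=0\})$ has the stated parametrisation.

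For (iv), in the interior $U,V$ are harmonic, hence smooth, and $dH/dz=H'$ is nonvanishing there because a holomorphic injection has nonvanishing derivative; the Cauchy--Riemann equations give $|dH/dz|^2=U_x^2+V_x^2=|\nabla V|^2$. Up to the boundary, $H$ extends analytically across $\{y=-h\}$ by Schwarz reflection (both $\{y=-h\}$ and its image $\R\times\{0\}$ being straight lines), while near $S$, when $S$ is of class $C^{1,\beta}$, the Kellogg--Warschawski theorem gives that $H$ extends to $\overline{\Omega_h}$ in class $C^{1,\beta}$ with $H'$ Hölder continuous and \emph{nonvanishing} on the closure; hence $U,V\in C^{1,\beta}(\overline{\Omega_h})$ and $|dH/dz|^2=|\nabla V|^2\neq0$ throughout $\overline{\Omega_h}$. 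I expect the real obstacle to be exactly this last step: the non-degeneracy $H'\neq0$ and the sharp $C^{1,\beta}$ regularity at the boundary genuinely require $\beta>0$ and the quantitative Kellogg--Warschawski estimates, not a soft argument. By comparison, the annulus classification behind (i)--(ii) and the Fourier bookkeeping in (iii) are routine, the points needing care being only the correct accounting of the translation freedom and of the zero-mean normalisation that characterises $h$.
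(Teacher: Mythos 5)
Your proposal is correct and follows the same route as the reference \cite[Thm.~2.2, Appx.~A]{ConsVarva11}, to which the paper simply defers for its proof: quotient by the period to pass to a doubly connected domain, invoke the conformal classification of annuli (with the translation freedom reflecting its rotation group, as the paper's own parenthetical note indicates), read off (iii) from the Dirichlet problem and a Fourier computation, and obtain (iv) from Schwarz reflection at the flat bed together with Kellogg--Warschawski regularity at the free surface. The only step that genuinely needs care, exactly as you flag, is the nondegeneracy $H'\neq 0$ up to the closure, for which the quantitative $C^{1,\beta}$ boundary theory is indeed the right tool.
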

\begin{proof}
	See \cite[Thm. 2.2., Appx. A]{ConsVarva11}. (That horizontal translations are the only degree of freedom is reminiscent of the fact that the automorphisms of annuli leaving their boundary circles invariant are exactly the rotations.)
\end{proof}

In \eqref{eq:OriginalEquations}, we demand that $\Omega$ is a $L$-periodic strip-like domain of class $C^{1,\alpha}$ with conformal mean depth $h$ so that it is, due to Lemma \ref{lma:ConformalMapping}, the conformal image of $\Omega_h$ and the free surface $S$ is determined by some $L$-periodic $w$ of class $C^{1,\alpha}$ with zero average over one period satisfying
\[(1+\Ch w')^2+w'^2\ne 0\text{ on }\R\]
via
\begin{align}\label{eq:Sw}
	S=\{(x+(\Ch w)(x),w(x)+h):x\in\R\}.
\end{align}
Throughout this paper, $h>0$ is fixed.

In order to make sense of the mean curvature
\[\kappa=\kappa[w](x)=\frac{(1+(\Ch w')(x))w''(x)-w'(x)(\Ch w'')(x)}{((1+(\Ch w')(x))^2+w'(x)^2)^{3/2}}\]
at $(x+(\Ch w)(x),w(x)+h)$, the function $w$ should be of class $C^{2,\alpha}$. Since $\Omega$ should be a strip-like domain, we have to exclude self-intersection of the wave and intersection of the surface with the bed, which reads in terms of $w$ as
\begin{subequations}\label{eq:additional_requirements}
\begin{gather}
	x\mapsto(x+(\Ch w)(x),w(x)+h)\text{ is injective on }\R,\\
	w>-h\text{ on }\R.
\end{gather}	
\end{subequations}

In order to identify a curve of trivial solutions, we have to solve the boundary value problem
\[\psi_{yy}=-\gamma(\psi)\text{ on }[-h,0],\quad\psi(0)=0,\quad\psi(-h)=-m\]
for $\psi=\psi(y)$; then $\psi(\cdot-h)$ solves \eqref{eq:OriginalEquations} with $w=0$ (that is, $\Omega=\R\times(0,h)$). However, in general, a solution to this problem neither has to exist nor be unique. Thus, we introduce a new parameter $\lambda\in\R$, which will later serve as the bifurcation parameter, and prescribe $\psi_y(0)=\lambda$. Notice that $\lambda$ can be interpreted as the (horizontal) velocity at the surface (or, rather, the relative velocity as everything is usually written down in a moving frame). The trivial solution corresponding to $\lambda$ is then $\psi=\psi^\lambda$, which is defined to be the unique solution of the Cauchy problem
\begin{align}\label{eq:trivial_sol}
	\psi_{yy}^\lambda=-\gamma(\psi^\lambda)\text{ on }[-h,0],\quad\psi^\lambda(0)=0,\quad\psi_y^\lambda(0)=\lambda.
\end{align}
Indeed, there exists a unique solution due to the global Lipschitz continuity of $\gamma$. We therefore view $m$ not as a parameter, but as a function $m=m(\lambda)$ of $\lambda$ defined by
\begin{align}\label{eq:def_m}
	m(\lambda)\coloneqq -\psi^\lambda(-h).
\end{align}
At this point we mention that the assumption that $\gamma$ be globally Lipschitz continuous is only needed to ensure that all trivial solutions exist on $[-h,0]$; we could therefore relax this assumption and demand that all trivial solutions under consideration exist on $[-h,0]$, which is, however, not very explicit.

We now introduce the functional-analytic setting: The unknowns are $(w,\phi)\in\U\subset X$, where we denote 
\begin{align*}
	X&\coloneqq C_{0,\per,\e}^{1,\alpha}(\R)\times\left\{\phi\in C_{\per,\e}^{0,\alpha}(\overline{\Omega_h})\cap H_\per^1(\Omega_h):\phi=0\text{ on }y=0\text{ and on }y=-h\right\},\\
	\U&\coloneqq\left\{(w,\phi)\in X:(1+\Ch w')^2+w'^2>0\text{ on }\R\right\},\\
	\Om&\coloneqq\R\times\U.
\end{align*}
Here, the indices \enquote{$\per$}, \enquote{$\e$}, and \enquote{$0$} denote $L$-periodicity, evenness (in $x$ with respect to $x=0$), and zero average over one period. Moreover, $X$ is equipped with the norm
\begin{align*}
	\|(w,\phi)\|_X&\coloneqq\|w\|_{C_\per^{1,\alpha}(\R)}+\|\phi\|_{C_\per^{0,\alpha}(\overline{\Omega_h})\cap H_\per^1(\Omega_h)};\\
	\|w\|_{C_\per^{1,\alpha}(\R)}&\coloneqq\|w\|_{C^{1,\alpha}([0,L])},\\
	\|\phi\|_{C_\per^{0,\alpha}(\overline{\Omega_h})\cap H_\per^1(\Omega_h)}&\coloneqq\|\phi\|_{C_\per^{0,\alpha}(\overline{\Omega_h})}+\|\phi\|_{H_\per^1(\Omega_h)}\coloneqq\|\phi\|_{C^{0,\alpha}(\overline{\Omega_h^*})}+\|\phi\|_{H^1(\Omega_h^*)}.
\end{align*}
Here and in the following, $\Omega^*$ denotes one copy of a $L$-periodic domain $\Omega\subset\R^2$. In general, we will write
\[\|\cdot\|_{C_\per^{k,\alpha}(\R)}\coloneqq\|\cdot\|_{C^{k,\alpha}([0,L])},\quad\|\cdot\|_{C_\per^k(\R)}\coloneqq\sum_{j=0}^k\|\cdot^{(j)}\|_\infty,\quad\|\cdot\|_{C_\per^{k,\alpha}(\overline\Omega)}\coloneqq\|\cdot\|_{C^{k,\alpha}(\overline{\Omega^*})}\]
for $k\in\N_0$ and $\Omega$ as above, where $C^k$ is the space of $k$-times continuously differentiable functions as usual.

Given $w\in C_{0,\per}^{1,\alpha}(\R)$, we denote by $V$ the unique solution of \eqref{eq:BVP_for_V} in $C_\per^{1,\alpha}(\overline{\Omega_h})$. Provided \eqref{eq:additional_requirements}, $V$ gives rise to a conformal mapping $H=U+iV$ from $\Omega_h$ to $\Omega=\Omega_w$, the surface of which being determined by \eqref{eq:Sw}, in view of Lemma \ref{lma:ConformalMapping}. Notice that $V$ is explicitly given by
\begin{align}\label{eq:V_explicit}
	V(x,y)=y+h+\sum_{k\neq 0}\hat w_k e^{ikx} \frac{\sinh(k\nu(y+h))}{\sinh(k\nu h)},
\end{align}
where the $\hat w_k$ are the Fourier coefficients of $w$. We sometimes denote $V=V[w+h]$ (and likewise also for $H$ and $U$) if we want to express the dependency of $V$ on the boundary condition at $y=0$ explicitly. It is clear that $V$ is analytic in $w$. In general, for boundary conditions $v$ at $y=0$ not necessarily having the form $w+h$, \eqref{eq:V_explicit} reads
\begin{align*}
	V[v](x,y)=\frac{\langle v\rangle}{h}(y+h)+\sum_{k\neq 0}\hat v_k e^{ikx}\frac{\sinh(k\nu (y+h))}{\sinh(k\nu h)}.
\end{align*}
Here and in the following, $\langle f\rangle$ denotes the average of a $L$-periodic function $f$ over one period.
	
For $(\lambda,w,\phi)\in\Om$, we write $\A(\lambda,w,\phi)=\tilde\psi$ for the unique solution of
\begin{align*}
	\Delta\tilde\psi&=-\gamma(\phi+\psi^\lambda)|\nabla V|^2+\gamma(\psi^\lambda)&\text{in }\Omega_h,\\
	\tilde\psi&=0&\text{on }y=0,\\
	\tilde\psi&=0&\text{on }y=-h.
\end{align*}
in $C_\per^{2,\alpha}(\overline{\Omega_h})$. Notice that this is equivalent to
\begin{align*}
	\Delta\psi&=-\gamma((\phi+\psi^\lambda)\circ H^{-1})&\text{in }\Omega_w,\\
	\psi&=0&\text{on }S_w,\\
	\psi&=-m(\lambda)&\text{on }y=0,
\end{align*}
where $\psi=(\tilde\psi+\psi^\lambda)\circ H^{-1}$. Thus, $\A(\lambda,w,\phi)=\phi$ is equivalent to the statement that $\psi=(\phi+\psi^\lambda)\circ H^{-1}$ solves \eqref{eq:OriginalEquations_Poisson}, \eqref{eq:OriginalEquations_KinematicTop}, and \eqref{eq:OriginalEquations_KinematicBottom} with $\Omega=H(\Omega_h)$ and $m=m(\lambda)$, provided \eqref{eq:additional_requirements}. In particular, the points $(\lambda,0,0)$ are identified as the trivial solutions. Moreover, clearly $V$ and thus $\A(\lambda,w,\phi)$ are even in $x$.

Let us denote
\begin{align*}
	\K(w)&\coloneqq((1+\Ch w')^2+w'^2)^{1/2},\\
	Q(\lambda,w,\phi)&\coloneqq\frac{\left\langle\K(w)\left(\frac{1}{2\K(w)^2}\SL|\nabla(\A(\lambda,w,\phi)+\psi^\lambda)|^2+gw\right)\right\rangle}{\langle\K(w)\rangle},\\
	R(\lambda,w,\phi)&\coloneqq\sigma^{-1}\K(w)\left(\frac{1}{2\K(w)^2}\SL|\nabla(\A(\lambda,w,\phi)+\psi^\lambda)|^2+gw-Q(\lambda,w,\phi)\right)
\end{align*}
for $(\lambda,w,\phi)\in\Om$. Due to the appearance of the squares, it is evident that all three expressions are also even in $x$.

We denote by $\partial_x^{-2}\colon C_{0,\per}^{0,\alpha}(\R)\to C_{0,\per}^{2,\alpha}(\R)$ the inverse operation to twice differentiation; explicitly, it holds that
\[\partial_x^{-2}f=\Pro\left(x\mapsto\int_0^x\int_0^sf(t)\,dtds-\frac{x}{L}\int_0^L\int_0^sf(t)\,dtds\right),\]
where $\Pro$ is the projection onto the space of functions with zero average. From a Fourier point of view, $\partial_x^{-2}$ is the multiplier with symbol $-\frac{1}{(k\nu)^2}$.
	
We reformulate the original problem \eqref{eq:OriginalEquations} as 
\begin{align}\label{eq:F=0}
	F(\lambda,w,\phi)=0
\end{align}
for $(\lambda,w,\phi)\in\Om$, where
\begin{gather*}
	F\colon\Om\to X,\;F(\lambda,w,\phi)=(w,\phi)-\M(\lambda,w,\phi),\\
	\M(\lambda,w,\phi)\coloneqq\left(\partial_x^{-2}((1+\Ch w')R(\lambda,w,\phi)+w'\Ch R(\lambda,w,\phi)),\A(\lambda,w,\phi)\right).
\end{gather*}
We point out that (in particular the first component of) $\M$ is well-defined since $R(\lambda,w,\phi)$ has zero average by definition and $\langle(\Ch f_1)f_2+f_1\Ch f_2\rangle=0$ for any $f_1,f_2\in C_{0,\per}^{0,\alpha}(\R)$ as $-i\coth(k\nu h)$, the symbol of $\Ch$, is odd, and, moreover, $\Ch w'$, $R(\lambda,w,\phi)$ are even and $w'$, $\Ch R(\lambda,w,\phi)$ are odd so that $\M_1(\lambda,w,\phi)$ is even in $x$.

\section{On the reformulation}\label{sec:Reformulation}
We now prove two important lemmas which justify and motivate this new reformulation. A similar version of the following lemma in the case of constant vorticity was obtained in \cite{deBoeck14a}.
\begin{lemma}\label{lma:Reformulation}
	Under the assumption \eqref{eq:additional_requirements}, a tuple $(\lambda,w,\phi)\in\Om$ solves \eqref{eq:F=0} if and only if $w$ is of class $C^{2,\alpha}$, $\Omega_w$, the surface of which being determined by \eqref{eq:Sw}, is of class $C^{1,\alpha}$, and $(w,(\phi+\psi^\lambda)\circ H^{-1})$, where the conformal mapping $H=U+iV\colon\Omega_h\to\Omega_w$ is uniquely determined by $w$, solves \eqref{eq:OriginalEquations} with $\Omega=\Omega_w$, $Q=Q(\lambda,w,\phi)$, and $m=m(\lambda)$.
\end{lemma}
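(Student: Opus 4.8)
The plan is to split the operator equation $F(\lambda,w,\phi)=0$ into its two components,
\[
	\phi=\A(\lambda,w,\phi),\qquad w=\partial_x^{-2}\bigl((1+\Ch w')R(\lambda,w,\phi)+w'\Ch R(\lambda,w,\phi)\bigr),
\]
and treat them separately. The first component is, by the equivalence recorded immediately after the definition of $\A$ in Section~\ref{sec:Preliminaries}, nothing but the assertion that $\psi\coloneqq(\phi+\psi^\lambda)\circ H^{-1}$ solves \eqref{eq:OriginalEquations_Poisson}, \eqref{eq:OriginalEquations_KinematicTop} and \eqref{eq:OriginalEquations_KinematicBottom} with $\Omega=\Omega_w$ and $m=m(\lambda)$, so nothing new has to be shown there; and the claim that $\Omega_w$ is of class $C^{1,\alpha}$ is automatic, since for $(w,\phi)\in\U$ satisfying \eqref{eq:additional_requirements} the curve $S_w$ given by \eqref{eq:Sw} is a non-self-intersecting $C^{1,\alpha}$ curve not meeting the bed, so that $\Omega_w$ is a strip-like domain of class $C^{1,\alpha}$ to which Lemma~\ref{lma:ConformalMapping} applies. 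Thus everything reduces to showing that, in the presence of the first component, the second component is equivalent to the conjunction of ``$w\in C^{2,\alpha}$'' and ``$\psi$ satisfies \eqref{eq:OriginalEquations_Bernoulli} on $S_w$ with $Q=Q(\lambda,w,\phi)$''.

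First I would dispose of two bookkeeping points. Since $R(\lambda,w,\phi)$ and, as noted before \eqref{eq:F=0}, also $(1+\Ch w')R(\lambda,w,\phi)+w'\Ch R(\lambda,w,\phi)$ have zero average, while $w$ does too, the second component of $F=0$ is, upon applying $\partial_x^2$, respectively $\partial_x^{-2}$, equivalent to
\begin{equation}\label{eq:planReform}
	w''=(1+\Ch w')R(\lambda,w,\phi)+w'\Ch R(\lambda,w,\phi).
\end{equation}
A one-step bootstrap then shows that \eqref{eq:planReform} forces $w\in C^{2,\alpha}$: one has $\A(\lambda,w,\phi)\in C^{2,\alpha}(\overline{\Omega_h})$ by elliptic regularity, hence $\SL|\nabla(\A(\lambda,w,\phi)+\psi^\lambda)|^2\in C^{1,\alpha}(\R)$, hence $R(\lambda,w,\phi)\in C^{0,\alpha}(\R)$ (using $w\in C^{1,\alpha}$ and that $\K(w)$ is bounded away from $0$), hence the right-hand side of \eqref{eq:planReform} lies in $C^{0,\alpha}(\R)$ and $w=\partial_x^{-2}(\cdots)\in C^{2,\alpha}(\R)$. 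Granting the Babenko-type identity \eqref{eq:planBabenko} below, \eqref{eq:planReform} is then equivalent to $R(\lambda,w,\phi)=\K(w)\kappa[w]$; dividing by $\K(w)>0$ and unwinding the definition of $R$ --- and using that on $S_w$ one has $y-h=w$, that $|\nabla\psi|^2\circ H=|\nabla(\phi+\psi^\lambda)|^2/|H'|^2$ with $|H'|^2=\K(w)^2$ on $y=0$, and that $\kappa[w]$ is the curvature of $S_w$ --- this is precisely \eqref{eq:OriginalEquations_Bernoulli} on $S_w$ with $Q(\lambda,w,\phi)$ in place of $Q$; and taking the $\K(w)/\langle\K(w)\rangle$-weighted average of \eqref{eq:OriginalEquations_Bernoulli} and using $\langle\K(w)\kappa[w]\rangle=0$ shows that no value of $Q$ other than $Q(\lambda,w,\phi)$ is admissible, so that value is forced.

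The heart of the matter is the Babenko-type identity: for $w$ of class $C^{2,\alpha}$ with $(w,\phi)\in\U$ and every $g\in C_{0,\per}^{0,\alpha}(\R)$,
\begin{equation}\label{eq:planBabenko}
	w''=(1+\Ch w')g+w'\Ch g\quad\Longleftrightarrow\quad g=\K(w)\kappa[w].
\end{equation}
Write $a\coloneqq1+\Ch w'$, $b\coloneqq w'$, so that $\K(w)^2=a^2+b^2$, $a'=\Ch w''$, $b'=w''$, and $a+ib$ is the trace on $y=0$ of $H'$, which is holomorphic and nowhere zero on $\overline{\Omega_h}$ by Lemma~\ref{lma:ConformalMapping}(iv). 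Directly from the curvature formula, $\K(w)\kappa[w]=\theta'=(ab'-ba')/(a^2+b^2)$ and $(\log\K(w))'=(aa'+bb')/(a^2+b^2)$, where $\theta\coloneqq\arctan(b/a)=\arg(a+ib)$ is the tangent angle. The implication ``$\Leftarrow$'' rests on the conformal fact that $\Ch$ conjugates $\theta$ to $\log\K(w)$: since $\log H'$ is holomorphic and $L$-periodic on $\Omega_h$ and real on $y=-h$ (there $H'=U_x>0$, as the bed is flat and $x\mapsto U(x,-h)$ is increasing), the definition of $\Ch$ gives $\Ch\theta=\log\K(w)-\langle\log\K(w)\rangle$, whence $\Ch(\K(w)\kappa[w])=(\log\K(w))'$, and a short computation yields
\[
	(1+\Ch w')\,\K(w)\kappa[w]+w'\,\Ch\bigl(\K(w)\kappa[w]\bigr)=\frac{a(ab'-ba')+b(aa'+bb')}{a^2+b^2}=\frac{(a^2+b^2)b'}{a^2+b^2}=w''.
\]

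For the implication ``$\Rightarrow$'', put $\delta\coloneqq g-\K(w)\kappa[w]\in C_{0,\per}^{0,\alpha}(\R)$; subtracting the identity just established from the hypothesis gives $a\delta+b\Ch\delta=0$. Let $\mathcal D$ be the holomorphic function on $\Omega_h$, continuous up to the boundary, that is real on $y=-h$ and equals $\Ch\delta+i\delta$ on $y=0$ --- its existence, with these boundary properties, is exactly what is encoded in the definition of $\Ch$, and uses $\langle\delta\rangle=0$. Then $H'\mathcal D$ is holomorphic on $\Omega_h$, with $\operatorname{Im}(H'\mathcal D)=a\delta+b\Ch\delta=0$ on $y=0$ and $\operatorname{Im}(H'\mathcal D)=0$ on $y=-h$ (a product of real traces); hence $\operatorname{Im}(H'\mathcal D)\equiv0$ by the maximum principle, $H'\mathcal D\equiv C$ for some $C\in\R$, and $\mathcal D=C/H'$, which on $y=0$ gives $\Ch\delta=C\operatorname{Re}(1/H')=C(1+\Ch w')/\K(w)^2$. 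Averaging over a period and using $\langle\Ch\delta\rangle=0$ yields $C\bigl\langle(1+\Ch w')/\K(w)^2\bigr\rangle=0$; but $1/H'$ is holomorphic and $L$-periodic on $\Omega_h$, so $\langle\operatorname{Re}(1/H')(\cdot,y)\rangle$ is independent of $y$ and hence equals its value on $y=-h$, namely $\langle1/U_x(\cdot,-h)\rangle>0$. Therefore $C=0$, $\delta=0$, and $g=\K(w)\kappa[w]$. I expect the main obstacle to be precisely \eqref{eq:planBabenko}: its ``$\Leftarrow$'' direction is where the conformal geometry genuinely enters (through $\Ch\theta=\log\K(w)-\langle\log\K(w)\rangle$), and its ``$\Rightarrow$'' direction, although formally a mere uniqueness assertion, needs the maximum-principle argument above, whose only non-formal ingredient is the strict positivity of $\langle(1+\Ch w')/\K(w)^2\rangle$, itself a consequence of $U_x>0$ along the flat bed. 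All remaining steps --- the splitting into components, the zero-average bookkeeping, the bootstrap, and the weighted average that pins down $Q$ --- are routine.
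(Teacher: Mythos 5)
Your proof is correct and follows essentially the same route as the paper's: the reduction splits $F=0$ into its two components, identifies the first with the Poisson/kinematic system, and reformulates the second as a Babenko-type curvature identity proved via the holomorphic structure attached to $\Ch$ (a holomorphic function of the form $\Ch u + iu$, real on $y=-h$) together with an averaging argument along the flat bottom. Your conjugacy $\Ch\theta=\log\K(w)-\langle\log\K(w)\rangle$ is the same fact the paper encodes in showing $\SL[\mathrm{Re}\{(C+iD)/(A+iB)\}]=\tfrac12\tfrac{d}{dx}\log\K^2$ has zero mean (forcing their constant $K=0$), and your uniqueness step ($H'\mathcal D$ constant, then $C\langle\mathrm{Re}(1/H')\rangle=0$ evaluated at $y=-h$) is exactly the paper's elimination of $\tilde K$ via $\langle 1/A|_{y=-h}\rangle>0$; so the substance is identical, only the packaging of the complex-analytic step differs.
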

\begin{proof}
	As was already observed, $\phi=\A(\lambda,w,\phi)$ is equivalent to that \eqref{eq:OriginalEquations_Poisson}, \eqref{eq:OriginalEquations_KinematicBottom}, \eqref{eq:OriginalEquations_KinematicTop} are solved by $\psi=(\phi+\psi^\lambda)\circ H^{-1}$ with $m=m(\lambda)$. It is also clear that $w$ is of class $C^{2,\alpha}$ and $\Omega_w$ is of class $C^{1,\alpha}$ provided $(\lambda,w,\phi)\in\Om$ solves \eqref{eq:F=0}. Moreover, we rewrite the Bernoulli equation \eqref{eq:OriginalEquations_Bernoulli} for $\psi=(\phi+\psi^\lambda)\circ H^{-1}$:
	\begin{align}
		&\frac{(1+\Ch w')w''-w'\Ch w''}{(1+\Ch w')^2+w'^2}\nonumber\\
		&=\sigma^{-1}((1+\Ch w')^2+w'^2)^{1/2}\left(\SL\left[\frac{|\nabla(\A(\lambda,w,\phi)+\psi^\lambda)|^2}{2|\nabla V|^2}\right]+gw-Q\right)\eqqcolon R.\label{eq:OriginalBernoulliFlat}
	\end{align}
	First notice that
	\begin{align}\label{eq:length_of_nablaV_on_top}
		\SL|\nabla V|^2=\K(w)^2
	\end{align}
	due to \eqref{eq:nablaV_on_top}.
	
	Now let $u$ be any $L$-periodic real-valued function with zero average, written as
	\[
	u(x)=\sum_{k\ne 0} \hat u_k e^{ik\nu x},
	\]
	with $\hat u_{-k}=\overline{\hat u_k}$.
	Then, $\mathcal R_h$ is defined by
	\begin{align*}
	\mathcal R_h u(x+iy)
	&=-\sum_{k\ne 0} \hat u_k e^{ik\nu(x+i(y+h))} \frac{1}{\sinh(k\nu h)}
	\\
	&=\sum_{k\ne 0} \hat u_k e^{ik\nu x} \frac{\sinh(k\nu(y+h))-\cosh(k\nu(y+h))}{\sinh(k\nu h)} \\
	&=\sum_{k\ne 0} \hat u_k e^{ik\nu x}\frac{\sinh(k\nu(y+h))}{\sinh(k\nu h)}
	+i \sum_{k\ne 0} \hat u_k e^{ik\nu x}\frac{i\cosh(k\nu(y+h))}{\sinh(k\nu h)}\\
	&=\vartheta+i\varphi,
	\end{align*}
	where $\vartheta$ is harmonic with $\SL\vartheta=u$, $\vartheta|_{y=-h}=0$ and
	$\varphi_y|_{y=-h}=0$. Then formally $\mathcal R_h u$ is analytic with
	\[
	\mathcal R_h u(x)=u(x)-i\Ch u(x).
	\]
	
	In order to rewrite the Bernoulli equation in a suitable form,
	introduce two analytic functions defined by
	\[
	A+iB=1+i\mathcal R_h w'
	\]
	and
	\[
	C+iD=i\mathcal R_h w''
	\]
	so that
	\[
	\SL[A+iB]=(1+\Ch w')+iw'
	\] 
	and 
	\[
	\SL[C+iD]=\Ch w''+i w''.
	\]
	From \eqref{eq:V_explicit} and Lemma \ref{lma:ConformalMapping}(iv)---notice that in both statements to be proved equivalent the surface is of class $C^{1,\alpha}$---we infer that $A+iB=V_y+iV_x=\frac{d}{dz}H\neq 0$ everywhere.
	Then,
	\[
	\SL\left[\text{Im}\left\{\frac{C+iD}{A+iB}\right\}\right]=\frac{(1+\Ch w')w''-w'\Ch w''}{(1+\Ch w')^2+w'^2}=R.
	\]
	Note that 
	\[
	\text{Im}\left\{\frac{C+iD}{A+iB}\right\}=\frac{AD-BC}{A^2+B^2}
	\]
	and
	\[
	\text{Re}\left\{\frac{C+iD}{A+iB}\right\}=\frac{AC+BD}{A^2+B^2}
	\]
	so that $\text{Im}\left\{\frac{C+iD}{A+iB}\right\}$ vanishes at $y=-h$ (because $B$ and $D$ do), while $\text{Re}\left\{\frac{C+iD}{A+iB}\right\}$ is $L$-periodic in $x$.
	It follows from the Cauchy-Riemann equations that 
	\[
	\SL\left[\text{Im}\left\{\frac{C+iD}{A+iB}\right\}\right]
	\]
	has zero average. 
	Indeed if $\vartheta+i\varphi$ is $L$-periodic and analytic with $\varphi|_{y=-h}=0$ then
	\[
	\frac{d}{dy} \int_{0}^{L} \varphi(x,y) \, dx=\int_0^{L} \varphi_y(x,y)\, dx=\int_0^{L} \vartheta_x(x,y)\, dx
	=\vartheta(L,y)-\vartheta(0,y)=0
	\]
	and
	\[
	\int_{0}^{L} \varphi(x,-h) \, dx=0.
	\]
	Therefore, $R$ has zero average; in particular, $Q=Q(\lambda,w,\phi)$ and $R=R(\lambda,w,\phi)$. Moreover, we can define an analytic function $E+iG\coloneqq i\mathcal R_h R$, and
	\[
	\frac{C+iD}{A+iB}=E+iG +K
	\]
	for some real constant $K$ (the imaginary parts are harmonic, periodic and equal on the top and bottom, and therefore equal everywhere). 
	The constant $K$ is the average over one period of
	\begin{align*}
		\SL\left[\text{Re}\left\{\frac{C+iD}{A+iB}\right\}\right]&=\SL\left[\frac{AC+BD}{A^2+B^2}\right]
		=\frac{(1+\Ch w')\Ch w''+w'w''}{(1+\Ch w')^2+w'^2}\\
		&=
		\frac12 \frac{d}{dx} \ln ((1+\Ch w')^2+w'^2).
	\end{align*}
	Hence, $K=0$ and
	\[
	C+iD=(E+iG)(A+iB)
	\]
	and in particular
	\begin{equation}
	\label{eq:nice}
	w''=\SL D=\SL[AG+BE]=(1+\Ch w')R+w'\Ch R,
	\end{equation}
	which is exactly the equation $F_1(\lambda,w,\phi)=0$, that is, the first component of \eqref{eq:F=0}.
	
	In order to go in the opposite direction, suppose that we start with \eqref{eq:nice} for $R=R(\lambda,w,\phi)$, which has zero average. Then
	we get
	\[
	C+iD=(E+iG)(A+iB)+\tilde K,
	\]
	for some real constant $\tilde K$.
	Hence,
	\[
	\frac{C+iD}{A+iB}=E+iG+ \frac{\tilde K}{A+iB}.
	\]
	We use this to prove that $\tilde K=0$. Indeed, we already know that 
	$\SL\left[\text{Re}\left\{\frac{C+iD}{A+iB}\right\}\right]$ has zero average. Hence, the same is true over any horizontal line.
	Furthermore, $E$ has zero average over any horizontal line. Therefore 
	$\text{Re}\left\{\frac{\tilde K}{A+iB}\right\}$ has zero average over any horizontal line. Evaluating at $y=-h$ we find that $B$ vanishes, and thus the average of $\frac{\tilde K}{A|_{y=-h}}$ vanishes. But clearly this is impossible unless $\tilde K=0$.
	Finally, we get
	\[
	\frac{(1+\Ch w')w''-w'\Ch w''}{(1+\Ch w')^2+w'^2}=\SL\left[\text{Im}\left\{\frac{C+iD}{A+iB}\right\}\right]=\SL G=R,
	\]
	which is exactly \eqref{eq:OriginalEquations_Bernoulli} with constant $Q(\lambda,w,\phi)$.
\end{proof}
\begin{lemma}\label{lma:M_prop}
	The operator $\M$ (in particular, $\A$) and thus $F$ is of class $C^2$ on $\Om$. Moreover, for every $\varepsilon>0$ the operator $\M$ is compact on the set
	\[\Om_\varepsilon\coloneqq\R\times\U_\varepsilon\coloneqq\R\times\left\{(w,\phi)\in\U:(1+\Ch w')^2+w'^2\ge\varepsilon\emph{ on }\R\right\}.\]
\end{lemma}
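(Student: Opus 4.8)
The plan is to break the claim into two parts: $C^2$-smoothness of $\M$ on all of $\Om$, and compactness of $\M$ on each $\Om_\varepsilon$. The key observation throughout is that $\M$ is built from three kinds of ingredients: (i) the solution operator $\A$ of a semilinear elliptic Dirichlet problem on the fixed strip $\Omega_h$; (ii) the explicit Fourier multipliers $\Ch$, $\partial_x^{-2}$, and the Poisson-type operator $w\mapsto V[w+h]$; and (iii) pointwise nonlinear superposition with $\gamma$, its derivatives, and algebraic operations like $\K(w)$ and division by $\langle\K(w)\rangle$. Each of these is either smooth and bounded, or smooth and smoothing (i.e.\ gains regularity), and composing them appropriately gives both conclusions.

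First I would treat $\A$. For fixed $(\lambda,w,\phi)\in\Om$, the right-hand side $-\gamma(\phi+\psi^\lambda)|\nabla V|^2+\gamma(\psi^\lambda)$ lies in $C_\per^{0,\alpha}(\overline{\Omega_h})$: indeed $\phi\in C_\per^{0,\alpha}(\overline{\Omega_h})$ by definition of $X$, $\psi^\lambda$ is smooth in $y$ by \eqref{eq:trivial_sol}, $\gamma\in C_{\text{loc}}^{2,1}$ so superposition with $\gamma$ is a $C^2$ map $C_\per^{0,\alpha}\to C_\per^{0,\alpha}$, and $|\nabla V|^2=(V_y)^2+(V_x)^2$ is $C^{0,\alpha}$ and depends analytically (in particular $C^2$) on $w$ via the explicit formula \eqref{eq:V_explicit}, which shows $w\mapsto V[w+h]$ is a bounded linear-plus-constant map into $C_\per^{1,\alpha}(\overline{\Omega_h})$. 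Schauder theory for the Dirichlet problem on $\Omega_h$ then gives a bounded solution operator $C_\per^{0,\alpha}(\overline{\Omega_h})\to C_\per^{2,\alpha}(\overline{\Omega_h})$, and since the dependence of the data on $(\lambda,w,\phi)$ is $C^2$ and the solution operator is linear, $\A$ is $C^2$ from $\Om$ into $C_\per^{2,\alpha}(\overline{\Omega_h})$; a fortiori it is $C^2$ into the (weaker) target space appearing in $X$, and evenness in $x$ is inherited. Then $\SL|\nabla(\A(\lambda,w,\phi)+\psi^\lambda)|^2$ is obtained by taking gradients (losing one derivative, landing in $C_\per^{1,\alpha}(\overline{\Omega_h})$), restricting to $y=0$ via $\SL$ (a bounded operator $C_\per^{1,\alpha}(\overline{\Omega_h})\to C_\per^{1,\alpha}(\R)$), squaring, and adding, all $C^2$ operations; combined with $gw\in C_\per^{1,\alpha}(\R)$ and $\K(w)=((1+\Ch w')^2+w'^2)^{1/2}$, which is a $C^2$ function of $(w,\phi)\in\U$ into $C_\per^{0,\alpha}(\R)$ and bounded away from $0$ there, so that $1/\K(w)^2$ and the averaged quantity $Q(\lambda,w,\phi)$ are likewise $C^2$ (division by $\langle\K(w)\rangle>0$ is smooth). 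Hence $R(\lambda,w,\phi)\in C_\per^{0,\alpha}(\R)$ depends in a $C^2$ way on $(\lambda,w,\phi)$, and $\M_1=\partial_x^{-2}((1+\Ch w')R+w'\Ch R)$ is $C^2$ since $\Ch$ and $\partial_x^{-2}$ are bounded linear operators on the relevant periodic Hölder spaces and multiplication $C_\per^{0,\alpha}\times C_\per^{0,\alpha}\to C_\per^{0,\alpha}$ is bilinear bounded. This establishes that $F=\mathrm{id}-\M$ is $C^2$ on $\Om$.

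For compactness on $\Om_\varepsilon$, the point is that $\M$ actually maps into a space compactly embedded in $X$. Restricted to $\Om_\varepsilon$, the condition $(1+\Ch w')^2+w'^2\ge\varepsilon$ keeps $\K(w)$ uniformly bounded below, so the nonlinear denominators stay under control and the above chain shows $R$ lands boundedly in $C_\per^{0,\alpha}(\R)$; applying $\partial_x^{-2}$ gains two derivatives, so $\M_1$ maps bounded sets of $\Om_\varepsilon$ into bounded sets of $C_{0,\per,\e}^{2,\alpha}(\R)$, which embeds compactly into $C_{0,\per,\e}^{1,\alpha}(\R)$. For the second component, $\A$ maps bounded subsets of $\Om_\varepsilon$ into bounded subsets of $C_\per^{2,\alpha}(\overline{\Omega_h})$ (uniform Schauder estimate, using that the data stays bounded in $C_\per^{0,\alpha}(\overline{\Omega_h})$ on such sets — here one uses the boundedness of $\gamma'$ to control the superposition, and the continuity of $\lambda\mapsto\psi^\lambda$), and $C_\per^{2,\alpha}(\overline{\Omega_h})$ embeds compactly into $C_\per^{0,\alpha}(\overline{\Omega_h})\cap H_\per^1(\Omega_h)$ (the $C^{2,\alpha}\hookrightarrow C^{0,\alpha}$ part is Arzelà–Ascoli type, and a $C^{2,\alpha}$-bounded sequence is also bounded in $H^2$, hence precompact in $H^1$). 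Thus $\M(\Om_\varepsilon\cap B)$ is precompact in $X$ for every bounded $B$, i.e.\ $\M$ is compact on $\Om_\varepsilon$.

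The main obstacle I anticipate is bookkeeping rather than a genuine difficulty: one must carefully track which space each intermediate object lives in and verify that every nonlinear step (superposition with $\gamma$ and its derivatives, the square root in $\K$, the reciprocal, the quotient defining $Q$) is genuinely $C^2$ as a map between the chosen Hölder/Sobolev spaces — this requires that the arguments stay in a region where these functions are smooth, which for $\K$ and the reciprocals is exactly guaranteed by the definition of $\U$ (for the $C^2$ claim) and of $\U_\varepsilon$ (for the uniform bounds underlying compactness). A secondary point worth stating explicitly is the need for a \emph{uniform} Schauder estimate for $\A$ on bounded subsets of $\Om_\varepsilon$; this follows from the standard interior-and-boundary Schauder estimate on the fixed domain $\Omega_h$ together with the uniform $C^{0,\alpha}$-bound on the data, the latter using $\|\gamma'\|_\infty<\infty$ to bound $\gamma(\phi+\psi^\lambda)$ in $C^{0,\alpha}$ by an affine function of $\|\phi\|_{C^{0,\alpha}}$ and $|\lambda|$.
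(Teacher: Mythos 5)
Your proof is correct and follows essentially the same route as the paper's: $C^2$-regularity traced back through the Nemytskii operator for $\gamma\in C^{2,1}_{\mathrm{loc}}$ and the linear Schauder solution operator, and compactness via uniform Schauder bounds landing in $C^{2,\alpha}$ followed by the compact embeddings into $C^{1,\alpha}(\R)$ and $C^{0,\alpha}(\overline{\Omega_h})\cap H^1(\Omega_h)$. One small remark: invoking $\|\gamma'\|_\infty<\infty$ for the uniform Schauder estimate on bounded subsets of $\Om_\varepsilon$ is harmless but unnecessary, since the arguments of $\gamma$ are confined to a compact interval on such sets and $\gamma\in C^{2,1}_{\mathrm{loc}}$ already suffices (the paper explicitly notes that global Lipschitz continuity of $\gamma$ is needed only to guarantee that the trivial solutions $\psi^\lambda$ exist on all of $[-h,0]$).
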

\begin{proof}
	The other operations in the definition of $\M$ being smooth, the property that $\M$ is of class $C^2$ follows from the property that $\A$ is of class $C^2$; this, in turn, is guaranteed by the assumption $\gamma\in C_{\text{loc}}^{2,1}(\R)$. Now let $(\lambda,w,\phi)\in\Om_\varepsilon$ be arbitrary. In the following, we apply standard Schauder estimates; the quantity $c$ can change from line to line, but is always shorthand for a certain expression in its arguments which remains bounded for bounded arguments. First we have
	\begin{align}\label{eq:est_V}
		\|V\|_{C_\per^{1,\alpha}(\overline{\Omega_h})}\le c\left(\|w\|_{C_\per^{1,\alpha}(\R)}\right).
	\end{align}
	Thus and since $\psi^\lambda$ is of class $C^1$ with respect to $\lambda$, we see that
	\begin{align*}
		\|\A(\lambda,w,\phi)\|_{C_\per^{2,\alpha}(\overline{\Omega_h})}&\le c\left(\|\phi\|_{C_\per^{0,\alpha}(\overline{\Omega_h})},\|V\|_{C_\per^{1,\alpha}(\overline{\Omega_h})},|\lambda|\right)\\
		&\le c\left(\|\phi\|_{C_\per^{0,\alpha}(\overline{\Omega_h})},\|w\|_{C_\per^{1,\alpha}(\R)},|\lambda|\right).
	\end{align*}
	This shows that $\M_2$, the second component of $\M$, is compact on $\Om_\varepsilon$ due to the compact embedding of $C_\per^{2,\alpha}(\overline{\Omega_h})$ in $C_\per^{0,\alpha}(\overline{\Omega_h})$ and in $H_\per^1(\Omega_h)$. As for $\M_1$, we proceed with $R$ and find that
	\begin{align}\label{eq:est_R}
		\|R(\lambda,w,\phi)\|_{C_\per^{0,\alpha}(\R)}&\le c\left(\varepsilon^{-1},\|w\|_{C_\per^{1,\alpha}(\R)},\|\A(\lambda,w,\phi)\|_{C_\per^{1,\alpha}(\overline{\Omega_h})},|\lambda|\right)\nonumber\\
		&\le c\left(\varepsilon^{-1},\|\phi\|_{C_\per^{0,\alpha}(\overline{\Omega_h})},\|w\|_{C_\per^{1,\alpha}(\R)},|\lambda|\right)
	\end{align}
	since $\K(w)\ge\varepsilon^{1/2}$ and $\Ch$ is a bounded operator from $C^{0,\alpha}_{0,\per}(\R)$ (equipped with $\|\cdot\|_{C_\per^{0,\alpha}(\R)}$) to itself. Therefore,
	\begin{align}\label{eq:est_M1}
		\|\M_1(\lambda,w,\phi)\|_{C_\per^{2,\alpha}(\R)}\le c\left(\varepsilon^{-1},\|\phi\|_{C_\per^{0,\alpha}(\overline{\Omega_h})},\|w\|_{C_\per^{1,\alpha}(\R)},|\lambda|\right).
	\end{align}
	Hence, $\M_1$ is compact on $\Om_\varepsilon$ since $C_\per^{2,\alpha}(\R)$ is compactly embedded in $C_\per^{1,\alpha}(\R)$.
\end{proof}

\section{Local bifurcation}\label{sec:LocalBifurcation}
The goal of this section is to apply the following local bifurcation theorem of Crandall--Rabinowitz \cite[Thm. I.5.1]{Kielhoefer}.
\begin{theorem}\label{thm:CrandallRabinowitz}
	Let $X$ be a Banach space, $U\subset\R\times X$ open, and $F\colon U\to X$ have the property $F(\cdot,0)=0$. Assume that there exists $\lambda_0\in\R$ such that $F$ is of class $C^2$ in an open neighbourhood of $(\lambda_0,0)$, and suppose that $F_x(\lambda_0,0)$ is a Fredholm operator with index zero and one-dimensional kernel spanned by $x_0\in X$, and that the transversality condition $F_{\lambda x}(\lambda_0,0)x_0\notin\im F_x(\lambda_0,0)$ holds. Then there exists $\varepsilon>0$ and a $C^1$-curve $(-\varepsilon,\varepsilon)\ni s\mapsto(\lambda(s),x(s))$ with $(\lambda(0),x(0))=(\lambda_0,0)$ and $x(s)\neq 0$ for $s\neq 0$, and $F(\lambda(s),x(s))=0$. Moreover, all solutions of $F(\lambda,x)=0$ in a neighbourhood of $(\lambda_0,0)$ are on this curve or are trivial. Furthermore, the curve admits the asymptotic expansion $x(s)=sx_0+o(s)$.
\end{theorem}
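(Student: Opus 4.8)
This is the classical Crandall--Rabinowitz bifurcation-from-a-simple-eigenvalue theorem, and the plan is to prove it by a Lyapunov--Schmidt reduction; full details may be found in \cite[Ch.~I.5]{Kielhoefer}, so I only outline the argument. Write $L := F_x(\lambda_0, 0)$. Since $L$ is Fredholm of index zero with one-dimensional kernel $\langle x_0\rangle$, the first step is to fix a topological complement $X_1$ of $\langle x_0\rangle$ in $X$ and a one-dimensional complement $Z$ of $\im L$ in $X$, and to let $P$ be the continuous projection of $X$ onto $\im L$ along $Z$ and $Q := I - P$. Writing the unknown as $x = s x_0 + y$ with $(s, y) \in \R \times X_1$, the equation $F(\lambda, x) = 0$ splits into the \emph{auxiliary} equation and the one-dimensional \emph{bifurcation} equation
\[
P F(\lambda, s x_0 + y) = 0, \qquad Q F(\lambda, s x_0 + y) = 0.
\]

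For the auxiliary equation I would apply the implicit function theorem: its $y$-derivative at $(\lambda_0, 0, 0)$ is $P L|_{X_1}\colon X_1 \to \im L$, an isomorphism since $L|_{X_1}$ is injective with range $\im L$. As $F$ is $C^2$, this produces a $C^2$ solution map $y = y(\lambda, s)$ near $(\lambda_0, 0)$ with $y(\lambda_0, 0) = 0$; since $F(\lambda, 0) = 0$, uniqueness forces $y(\lambda, 0) \equiv 0$, and differentiating the defining identity in $s$ and using $L x_0 = 0$ gives $y_s(\lambda_0, 0) = 0$. Substituting back, the bifurcation equation becomes the scalar equation $\Phi(\lambda, s) := Q F(\lambda, s x_0 + y(\lambda, s)) = 0$, and since $\Phi(\lambda, 0) \equiv 0$ one factors $\Phi(\lambda, s) = s\,\Psi(\lambda, s)$ with $\Psi(\lambda, s) = \int_0^1 \Phi_s(\lambda, t s)\, dt$; here $\Psi$ is only $C^1$, and this is precisely the place where the $C^2$ hypothesis on $F$ is genuinely used.

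It then remains to solve $\Psi(\lambda, s) = 0$ for $\lambda = \lambda(s)$ near $s = 0$. Chain-rule computations, in which the $L$-contributions are annihilated by $Q$ because $\im L = \ker Q$, give $\Psi(\lambda_0, 0) = Q L x_0 = 0$ and $\Psi_\lambda(\lambda_0, 0) = Q F_{\lambda x}(\lambda_0, 0) x_0$, which is nonzero by the transversality assumption $F_{\lambda x}(\lambda_0, 0) x_0 \notin \im L$ (and $Z$ is one-dimensional). Hence the implicit function theorem yields a $C^1$ curve $s \mapsto \lambda(s)$ with $\lambda(0) = \lambda_0$ and $\Psi(\lambda(s), s) = 0$. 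Setting $x(s) := s x_0 + y(\lambda(s), s)$ gives the desired $C^1$-curve of solutions through $(\lambda_0, 0)$; since $y(\cdot, 0) \equiv 0$ and $y_s(\lambda_0, 0) = 0$ imply $\frac{d}{ds} y(\lambda(s), s)\big|_{s=0} = 0$, one obtains $x(s) = s x_0 + o(s)$, and in particular $x(s) \neq 0$ for $0 < |s|$ small.

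Completeness near $(\lambda_0, 0)$ follows because the reduction is exhaustive: any nearby solution is of the form $s x_0 + y(\lambda, s)$ and satisfies $s\,\Psi(\lambda, s) = 0$, so either $s = 0$, giving the trivial branch, or $\Psi(\lambda, s) = 0$, which by the uniqueness in the last implicit function theorem means $\lambda = \lambda(s)$, i.e.\ the point lies on the constructed curve. I expect the only real subtlety to be this regularity bookkeeping — tracking that $\Psi$ is $C^1$ and that the resulting curve is $C^1$ rather than merely continuous — since everything else is a routine application of the implicit function theorem.
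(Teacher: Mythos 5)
Your Lyapunov--Schmidt argument is correct and is exactly the classical proof in the cited reference \cite[Thm.~I.5.1]{Kielhoefer}: the splitting into auxiliary and bifurcation equations, the factoring $\Phi(\lambda,s)=s\Psi(\lambda,s)$, the observation that $C^2$ regularity of $F$ gives $C^1$ regularity of $\Psi$, and the transversality condition making $\Psi_\lambda(\lambda_0,0)=QF_{\lambda x}(\lambda_0,0)x_0\neq0$ are all the standard ingredients. The paper itself gives no proof of this theorem---it only states it with a citation to Kielh\"ofer---so your argument coincides with the cited source's proof.
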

In order to apply this theorem, we have to study the linearised operator and also the transversality condition. This will be the content of the next subsections.
\subsection{Computing derivatives}
We first want to calculate the partial derivative $F_{(w,\phi)}$ and, in particular, its value at a trivial solution. To this end, we introduce the abbreviation
\[B=B(\lambda,w,\phi)=\frac{1}{2\K(w)^2}\SL|\nabla(\A(\lambda,w,\phi)+\psi^\lambda)|^2.\] With this we have
\[Q=\frac{\langle\K(B+gw)\rangle}{\langle\K\rangle},\quad\sigma R=\K(B+gw-Q).\]
We first evaluate these quantities at a trivial solution $(\lambda,0,0)$. We have 
\[V=y+h,\,\nabla V=\begin{pmatrix}0\\1\end{pmatrix},\,\K=1,\,\A=0,\,\SL\nabla(\A+\psi^\lambda)=\begin{pmatrix}0\\\lambda\end{pmatrix},\,B=\frac{\lambda^2}{2},\,Q=\frac{\lambda^2}{2},\, R=0.\]
Let now $(\lambda,w,\phi)$ be arbitrary and $(\delta w,\delta\phi)$ be a direction. First consider $V_w(w)\delta w$, the partial derivative of $V$ with respect to $w$ evaluated at $w$ and applied to the direction $\delta w$, and abbreviate $V_w=V_w(w)\delta w$; we will also use this abbreviation similarly for other expressions and derivatives when there is no possibility of confusion. Now $V_w$ is the unique solution of
\begin{align*}
	\Delta V_w&=0&\text{in }\Omega_h,\\
	V_w&=\delta w&\text{on }y=0,\\
	V_w&=0&\text{on }y=-h.
\end{align*}
Next, $\A_w$ and $\A_\phi$ are the unique solutions of
\begin{align*}
\left.\begin{aligned}
	\Delta\A_w&=-2\gamma(\phi+\psi^\lambda)\nabla V\cdot\nabla V_w&\text{in }\Omega_h,\\
	\A_w&=0&\text{on }y=0,\\
	\A_w&=0&\text{on }y=-h,
\end{aligned}
\quad\middle|\quad
\begin{aligned}
\Delta\A_w&=-2\gamma(\psi^\lambda)\partial_yV_w&\text{in }\Omega_h,\\
\A_w&=0&\text{on }y=0,\\
\A_w&=0&\text{on }y=-h,
\end{aligned}\right.
\end{align*}
and
\begin{align*}
\left.\begin{aligned}
	\Delta\A_\phi&=-\gamma'(\phi+\psi^\lambda)\delta\phi|\nabla V|^2&\text{in }\Omega_h,\\
	\A_\phi&=0&\text{on }y=0,\\
	\A_\phi&=0&\text{on }y=-h,
\end{aligned}
\quad\middle|\quad
\begin{aligned}
	\Delta\A_\phi&=-\gamma'(\psi^\lambda)\delta\phi&\text{in }\Omega_h,\\
	\A_\phi&=0&\text{on }y=0,\\
	\A_\phi&=0&\text{on }y=-h.
\end{aligned}\right.
\end{align*}
Here and in the following, the left columns are the equations if $(\lambda,w,\phi)$ is arbitrary, and the right columns show the simplifications if $(\lambda,w,\phi)=(\lambda,0,0)$ is a trivial solution. Moreover, it holds that
\begin{align*}
\left.\begin{aligned}
	\K_w&=\K^{-1}\left((1+\Ch w')\Ch\delta w'+w'\delta w'\right),\\
	B_w&=\SL\Big[-\K^{-3}\K_w|\nabla(\A+\psi^\lambda)|^2\\
	&\phantom{=\;}+\K^{-2}\nabla(\A+\psi^\lambda)\cdot\nabla\A_w\Big],\\
	B_\phi&=\SL[\K^{-2}\nabla(\A+\psi^\lambda)\cdot\nabla\A_\phi],\\
	Q_w&=\frac{\langle\K_w(B+gw)+\K(B_w+g\delta w)\rangle}{\langle\K\rangle}\\
	&\phantom{=\;}-\frac{\langle\K(B+gw)\rangle\langle\K_w\rangle}{\langle\K\rangle^2},\\
	Q_\phi&=\frac{\langle\K B_\phi\rangle}{\langle\K\rangle},\\
	\sigma R_w&=\K_w(B+gw-Q)+\K(B_w+g\delta w-Q_w),\\
	\sigma R_\phi&=\K(B_\phi-Q_\phi)
\end{aligned}
\quad\middle|\quad
\begin{aligned}
	\K_w&=\Ch\delta w',\\
	B_w&=-\lambda^2\Ch\delta w'+\lambda\SL\partial_y\A_w,\\
	B_\phi&=\lambda\SL\partial_y\A_\phi,\\
	Q_w&=\langle B_w\rangle,\\
	Q_\phi&=\langle B_\phi\rangle,\\
	\sigma R_w&=B_w+g\delta w-Q_w,\\
	\sigma R_\phi&=B_\phi-Q_\phi
\end{aligned}\right.
\end{align*}
Notice that we exploited the fact that $\Ch$ maps functions with zero average to functions with zero average. Finally, we conclude
\begin{align*}
	F_w&=\left(\delta w-\partial_x^{-2}(R\Ch\delta w'+(1+\Ch w')R_w+\delta w'\Ch R+w'\Ch R_w),-\A_w\right),\\
	F_\phi&=\left(-\partial_x^{-2}((1+\Ch w')R_\phi+w'\Ch R_\phi),\delta\phi-\A_\phi\right)
\end{align*}
in general, and
\begin{subequations}\label{eq:Fder_trivial}
\begin{align}
	F_w&=\left(\delta w-\sigma^{-1}\partial_x^{-2}(\lambda\Pro\SL\partial_y\A_w-\lambda^2\Ch\delta w'+g\delta w),-\A_w\right),\\
	F_\phi&=\left(-\sigma^{-1}\lambda\partial_x^{-2}(\Pro\SL\partial_y\A_\phi),\delta\phi-\A_\phi\right)
\end{align}
\end{subequations}
evaluated at a trivial solution.

We note that, since $\M$ is compact due to Lemma \ref{lma:M_prop}, also its derivative is compact. Thus, $F_{(w,\phi)}=\mathrm{Id}-\M_{(w,\phi)}$ is a compact perturbation of the identity and hence a Fredholm operator with index zero.

\subsection{The good unknown}
Before we proceed with the investigation of local bifurcation, we first introduce an isomorphism, which facilitates the computations later and is sometimes called $\T$-isomorphism in the literature (for example, in \cite{EhrnEschWahl11,Varholm20}). The discovery of the importance of such a new variable (here $\theta$) goes back to Alinhac \cite{Alinhac89}, who called it the \enquote{good unknown} in a very general context, and Lannes \cite{Lannes05}, who introduced it in the context of water wave equations.
\begin{lemma}
	Let
	\[Y\coloneqq\left\{\theta\in C_{\per,\e}^{0,\alpha}(\overline{\Omega_h})\cap H_\per^1(\Omega_h):\SL\theta\in C_{0,\per,\e}^{1,\alpha}(\R),\theta=0\mathrm{\ on\ }y=-h\right\},\]
	equipped with
	\[\|\theta\|_Y\coloneqq\|\theta\|_{C_\per^{0,\alpha}(\overline{\Omega_h})\cap H_\per^1(\Omega_h)}+\|\SL\theta\|_{C_\per^{1,\alpha}(\R)},\]
	and assume that $\lambda\neq 0$. Then
	\[\T(\lambda)\colon Y\to X,\quad\T(\lambda)\theta=\left(-\frac{\SL\theta}{\lambda},\theta-\frac{\psi_y^\lambda}{\lambda}V[\SL\theta]\right)\]
	is an isomorphism. Its inverse is given by
	\[[\T(\lambda)]^{-1}(\delta w,\delta\phi)=\delta\phi-\psi_y^\lambda V[\delta w].\]
\end{lemma}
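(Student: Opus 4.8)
The plan is to exhibit the inverse explicitly --- which the statement already hands us --- and then to check, by direct computation, that both $\T(\lambda)$ and the candidate map
\[
G(\delta w,\delta\phi)\coloneqq\delta\phi-\psi_y^\lambda\,V[\delta w]
\]
are well-defined bounded linear operators between the indicated spaces and that $G\circ\T(\lambda)=\mathrm{Id}_Y$ and $\T(\lambda)\circ G=\mathrm{Id}_X$. Since a bounded two-sided inverse is produced by hand, no open mapping argument is needed. The two facts used throughout are that $v\mapsto V[v]$ is linear and continuous (from the explicit Fourier representation of $V$ together with Schauder and energy estimates for \eqref{eq:BVP_for_V}), with $V[v]|_{y=0}=v$ when $\langle v\rangle=0$ and $V[v]|_{y=-h}=0$, and that $\psi_y^\lambda$ is a bounded Lipschitz function of $y$ alone with $\psi_y^\lambda(0)=\lambda$ by the definition \eqref{eq:trivial_sol} of the trivial solution.

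First I would check $\T(\lambda)\colon Y\to X$. Linearity is clear. For $\theta\in Y$ the first component $-\SL\theta/\lambda$ lies in $C_{0,\per,\e}^{1,\alpha}(\R)$ precisely because $\SL\theta$ does (this is the defining extra regularity of $Y$) and $\lambda\neq 0$. For the second component, $\theta\in C_{\per,\e}^{0,\alpha}(\overline{\Omega_h})\cap H_\per^1(\Omega_h)$, while $V[\SL\theta]\in C_\per^{1,\alpha}(\overline{\Omega_h})\cap H_\per^1(\Omega_h)$ by elliptic regularity (consistently with Lemma \ref{lma:ConformalMapping}(iv)), and multiplication by the fixed function $\psi_y^\lambda$ preserves $C_\per^{0,\alpha}(\overline{\Omega_h})\cap H_\per^1(\Omega_h)$ and evenness in $x$; hence $\theta-\frac{\psi_y^\lambda}{\lambda}V[\SL\theta]$ lies in $C_{\per,\e}^{0,\alpha}(\overline{\Omega_h})\cap H_\per^1(\Omega_h)$. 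The point is the boundary traces: at $y=-h$ both $\theta$ and $V[\SL\theta]$ vanish, and at $y=0$ the trace of the second component equals $\SL\theta-\frac{\psi_y^\lambda(0)}{\lambda}\SL\theta=\SL\theta-\SL\theta=0$ by $\psi_y^\lambda(0)=\lambda$. Collecting the Schauder/energy bounds for $V$ together with $\sup_{[-h,0]}|\psi_y^\lambda|<\infty$ gives boundedness of $\T(\lambda)$.

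In exactly the same way I would check $G\colon X\to Y$: for $(\delta w,\delta\phi)\in X$ one has $G(\delta w,\delta\phi)\in C_{\per,\e}^{0,\alpha}(\overline{\Omega_h})\cap H_\per^1(\Omega_h)$ since $\delta\phi$ and $\psi_y^\lambda V[\delta w]$ both do; it vanishes on $y=-h$ because $\delta\phi$ and $V[\delta w]$ do; and its trace on the top is $\SL[G(\delta w,\delta\phi)]=\SL\delta\phi-\psi_y^\lambda(0)\,\delta w=-\lambda\,\delta w\in C_{0,\per,\e}^{1,\alpha}(\R)$, which is exactly the regularity demanded in the definition of $Y$. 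Finally, using linearity of $V$ in its boundary data, $G(\T(\lambda)\theta)=\bigl(\theta-\tfrac{\psi_y^\lambda}{\lambda}V[\SL\theta]\bigr)-\psi_y^\lambda V\bigl[-\tfrac{\SL\theta}{\lambda}\bigr]=\theta$, and since $\SL[G(\delta w,\delta\phi)]=-\lambda\,\delta w$ the first component of $\T(\lambda)G(\delta w,\delta\phi)$ is $-\tfrac1\lambda(-\lambda\,\delta w)=\delta w$ while the second is $G(\delta w,\delta\phi)-\tfrac{\psi_y^\lambda}{\lambda}V[-\lambda\,\delta w]=\delta\phi$. Hence $\T(\lambda)$ is an isomorphism with the stated inverse.

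I do not expect a genuine obstacle; the whole content is the careful bookkeeping of the three function spaces. The one thing worth stressing --- and the only place where something has to be arranged rather than merely observed --- is that the combination $\theta-\frac{\psi_y^\lambda}{\lambda}V[\SL\theta]$ is, up to the boundary, only as regular as $\theta$ itself, i.e.\ $C^{0,\alpha}_{\per}\cap H^1_{\per}$, which is what $X$ requires, even though its trace on $y=0$ collapses to $0$; and that the identity $\psi_y^\lambda(0)=\lambda$ coming from \eqref{eq:trivial_sol} is precisely what makes both the top and bottom boundary conditions, and hence membership in $X$ resp.\ $Y$, come out correctly.
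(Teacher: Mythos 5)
Your proof is correct and follows the same route as the paper, which simply asserts that both maps are well-defined and inverse to each other by "a simple computation". You have spelled out that computation, in particular the crucial use of $\psi_y^\lambda(0)=\lambda$ and the linearity of $V[\cdot]$ in its boundary data, and verified that the various traces land in the right spaces, which is exactly what the terse proof in the paper leaves to the reader.
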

\begin{proof}
	Both $\T(\lambda)$ and $[\T(\lambda)]^{-1}$ are well-defined, and a simple computation shows that they are inverse to each other.
\end{proof}

Let us now consider a trivial solution $(\lambda,0,0)$. In view of the isomorphism $\T$, we introduce
\[\LL(\lambda)\coloneqq [F_{(w,\phi)}(\lambda,0,0)]\circ [\T(\lambda)]\colon Y\to X\]
whenever $\lambda\neq 0$. In order to simplify $\LL(\lambda)\theta$, we notice that
\[\A_w(\SL\theta)+\A_\phi(\psi_y^\lambda V[\SL\theta])=(\psi_y^\lambda-\lambda)V[S\theta].\]
Indeed, the function $f\coloneqq\A_w(\SL\theta)+\A_\phi(\psi_y^\lambda V[\SL\theta])-(\psi_y^\lambda-\lambda)V[S\theta]$ satisfies
\begin{align*}
	\Delta f&=-2\gamma(\psi^\lambda)\partial_yV[\SL\theta]-\gamma'(\psi^\lambda)\psi_y^\lambda V[\SL\theta]-\psi_{yyy}^\lambda V[\SL\theta]-2\psi_{yy}^\lambda\partial_y V[\SL\theta]\\
	&=-2\gamma(\psi^\lambda)\partial_yV[\SL\theta]-\gamma'(\psi^\lambda)\psi_y^\lambda V[\SL\theta]+\gamma'(\psi^\lambda)\psi_y^\lambda V[\SL\theta]+2\gamma(\psi^\lambda)\partial_y V[\SL\theta]=0
\end{align*}
in $\Omega_h$ and vanishes at the top and bottom. By \eqref{eq:Fder_trivial} and additionally using $\Ch\SL\theta_x=\SL\partial_y V[\SL\theta]$, we can thus write
\begin{subequations}\label{eq:Ltheta}
\begin{align}
	\LL_1(\lambda)\theta&=-\lambda^{-1}\SL\theta-\sigma^{-1}\partial_x^{-2}\left(\lambda\Pro\SL\partial_y(\A_\phi\theta+V[\SL\theta])+(\gamma(0)-\lambda^{-1}g)\SL\theta\right),\\
	\LL_2(\lambda)\theta&=\theta-(\A_\phi\theta+V[\SL\theta]).
\end{align}
\end{subequations}
Notice that, under the assumption $\theta\in C_\per^{2,\alpha}(\overline{\Omega_h})$, $\LL_2(\lambda)\theta$ is the unique solution of
\begin{align}\label{eq:L2theta_rewritten}
	\Delta[\LL_2(\lambda)\theta]=\Delta\theta+\gamma'(\psi^\lambda)\theta\text{ in }\Omega_h,\quad\LL_2(\lambda)\theta=0\text{ on }y=0\text{ and }y=-h,
\end{align}
and $\LL_1(\lambda)\theta$ is (in the set of periodic functions with zero average) uniquely determined by
\begin{align}\label{eq:L1theta_rewritten}
	[\LL_1(\lambda)\theta]_{xx}=-\lambda^{-1}\SL\theta_{xx}-\sigma^{-1}\left(\lambda\Pro\SL\partial_y(\theta-\LL_2(\lambda)\theta)+(\gamma(0)-\lambda^{-1}g)\SL\theta\right).
\end{align}
\subsection{Kernel}
Clearly, it suffices to study the kernel of $\LL$; here and in the following, we will suppress the dependency of $\LL$ on $\lambda$. From \eqref{eq:Ltheta}, \eqref{eq:L2theta_rewritten}, and \eqref{eq:L1theta_rewritten} we infer that
\begin{align*}
	\LL\theta=0\;\Longleftrightarrow\;\theta\in C_\per^{2,\alpha}(\overline{\Omega_h})\text{ and }\begin{cases}\Delta\theta+\gamma'(\psi^\lambda)\theta=0\quad\text{and}\\\lambda^{-1}\SL\theta_{xx}+\sigma^{-1}\left(\lambda\Pro\SL\partial_y\theta+(\gamma(0)-\lambda^{-1}g)\SL\theta\right)=0;\end{cases}
\end{align*}
notice that $\LL\theta=0$ implies $\SL\theta\in C^{2,\alpha}_{0,\per}(\R)$ and thus $\theta=\A_\phi\theta+V[\SL\theta]\in C_\per^{2,\alpha}(\overline{\Omega_h})$. Let us now write $\theta(x,y)=\sum_{k=0}^\infty\theta_k(y)\cos(k\nu x)$ as a Fourier series. Then we easily see that $\LL\theta=0$ if and only if \eqref{eq:LLtheta0=0} and for all $k\ge1$ \eqref{eq:LLthetak=0} holds, where
\begin{align}\label{eq:LLtheta0=0}
	\left(\partial_y^2+\gamma'(\psi^\lambda)\right)\theta_0=0,
\end{align}
noticing that $\theta_0(0)=0$ is already included in the definition of $Y$, and
\begin{subequations}\label{eq:LLthetak=0}
\begin{align}
	\left(\partial_y^2+\gamma'(\psi^\lambda)-(k\nu)^2\right)\theta_k&=0,\label{eq:LL1thetak=0}\\
	\left(-\sigma\lambda^{-1}(k\nu)^2+\gamma(0)-\lambda^{-1}g\right)\theta_k(0)+\lambda\partial_y\theta_k(0)&=0.
\end{align}
\end{subequations}
Henceforth, we shall assume that 
\begin{align}\label{ass:SL-spectrum}
	0\text{ is not in the Dirichlet spectrum of }\partial_y^2+\gamma'(\psi^\lambda)\text{ on }[-h,0],
\end{align}
Thus, we see that \eqref{eq:LLtheta0=0} only has the trivial solution $\theta_0=0$, recalling that $\theta_0(0)=\theta_0(-h)=0$ by membership of $\theta$ in $Y$.

Let us now turn to $k\ge 1$ and recall that $\theta_k(-h)=0$ by $\theta\in Y$. First suppose that $(k\nu)^2$ is in the Dirichlet spectrum of $\partial_y^2+\gamma'(\psi^\lambda)$ on $[-h,0]$. Then necessarily $\theta_k(0)=0$ provided \eqref{eq:LL1thetak=0} is satisfied. Hence, \eqref{eq:LLthetak=0} can only hold if $\theta_k=0$. On the other hand, suppose that $(k\nu)^2$ is not in the Dirichlet spectrum of $\partial_y^2+\gamma'(\psi^\lambda)$ on $[-h,0]$. Then we find that \eqref{eq:LL1thetak=0} is equivalent to the statement that $\theta_k$ is a multiple of $\beta^{-(k\nu)^2,\lambda}$, where $\beta=\beta^{\mu,\lambda}$ is defined to be the unique solution of the boundary value problem
\begin{align}\label{eq:beta}
	\beta_{yy}+(\gamma'(\psi^\lambda)+\mu)\beta=0\text{ on }[-h,0],\quad\beta(-h)=0,\quad\beta(0)=1.
\end{align}
Indeed, this problem has a unique solution for $\mu=-(k\nu)^2$, which can be seen as follows: Let $\tilde\beta$ solve the initial value problem
\[\tilde\beta_{yy}+(\gamma'(\psi^\lambda)+\mu)\tilde\beta=0\text{ on }[-h,0],\quad\tilde\beta(-h)=0,\quad\tilde\beta_y(-h)=1.\]
Then $\tilde\beta(0)\neq 0$ by the assumption on the Dirichlet spectrum. Therefore, $\beta$ solves \eqref{eq:beta} if and only if $\beta=\tilde\beta/\tilde\beta(0)$. Hence, \eqref{eq:LLthetak=0} has a nontrivial solution $\theta_k$ if and only if the dispersion relation
\[d(-(k\nu)^2,\lambda)=0,\]
where
\begin{align}\label{eq:d(k,lambda)}
	d(\mu,\lambda)\coloneqq\beta_y^{\mu,\lambda}(0)+\sigma\lambda^{-2}\mu+\lambda^{-1}\gamma(0)-\lambda^{-2}g,
\end{align}
is satisfied, and in this case $\theta_k$ is a multiple of $\beta^{-(k\nu)^2,\lambda}$.
\begin{remark}
	Clearly, $d(\mu,\lambda)$ is at first defined if and only if $\mu$ is not in the Dirichlet spectrum of $-\partial_y^2-\gamma'(\psi^\lambda)$ on $[-h,0]$. If this property fails to hold, we set $d(\mu,\lambda)\coloneqq\infty$ in the following.
\end{remark}
We summarise our results concerning the kernel:
\begin{lemma}\label{lma:kernel}
	Given $\lambda\neq 0$ and under the assumption \eqref{ass:SL-spectrum}, a function $\theta\in Y$, admitting the Fourier decomposition $\theta(x,y)=\sum_{k=0}^\infty\theta_k(y)\cos(k\nu x)$, is in the kernel of $\LL(\lambda)$ if and only if $\theta_0=0$ and for each $k\ge 1$
	\begin{enumerate}[label=(\alph*)]
		\item $\theta_k=0$, or
		\item $(k\nu)^2$ is not in the Dirichlet spectrum of $\partial_y^2+\gamma'(\psi^\lambda)$ on $[-h,0]$, $\theta_k$ is a multiple of $\beta^{-(k\nu)^2,\lambda}$, and the dispersion relation
		\[d(-(k\nu)^2,\lambda)=0\]
		holds, with $d$ given in \eqref{eq:d(k,lambda)}.
	\end{enumerate}
\end{lemma}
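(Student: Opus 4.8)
The plan is to unwind the definitions of $\LL(\lambda)$ via the explicit formulas \eqref{eq:Ltheta} and to show that membership in the kernel is, step by step, equivalent to a system of ODEs for the Fourier modes of $\theta$. First I would record the key regularity bootstrap: if $\LL(\lambda)\theta=0$ then from $\LL_2(\lambda)\theta=0$ we get $\theta=\A_\phi\theta+V[\SL\theta]$, which forces $\SL\theta\in C^{2,\alpha}_{0,\per}(\R)$ (since $\LL_1(\lambda)\theta=0$ gives an equation for $[\SL\theta]_{xx}$ with a right-hand side in $C^{0,\alpha}$), and hence $\theta\in C^{2,\alpha}_\per(\overline{\Omega_h})$ by elliptic regularity applied to the defining problem of $V[\SL\theta]$ and $\A_\phi$. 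With this regularity in hand, \eqref{eq:L2theta_rewritten} and \eqref{eq:L1theta_rewritten} apply and $\LL\theta=0$ becomes the coupled system: $\Delta\theta+\gamma'(\psi^\lambda)\theta=0$ in $\Omega_h$ together with the surface condition $\lambda^{-1}\SL\theta_{xx}+\sigma^{-1}(\lambda\Pro\SL\partial_y\theta+(\gamma(0)-\lambda^{-1}g)\SL\theta)=0$, plus the boundary data $\theta=0$ on $y=-h$ already built into $Y$.

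Next I would separate variables. Writing $\theta(x,y)=\sum_{k\ge 0}\theta_k(y)\cos(k\nu x)$ (legitimate because $\theta$ is even and $L$-periodic in $x$), the interior equation $\Delta\theta+\gamma'(\psi^\lambda)\theta=0$ decouples into \eqref{eq:LLtheta0=0} for $k=0$ and \eqref{eq:LL1thetak=0} for $k\ge 1$. The surface condition similarly decouples: its $k=0$ Fourier component vanishes identically because of the projection $\Pro$ onto zero-average functions (so no extra constraint at $k=0$ beyond $\theta_0(0)=0$, which is part of $Y$), while for $k\ge 1$ the projection is the identity on $\cos(k\nu x)$ and one reads off exactly the second equation in \eqref{eq:LLthetak=0}, namely $(-\sigma\lambda^{-1}(k\nu)^2+\gamma(0)-\lambda^{-1}g)\theta_k(0)+\lambda\partial_y\theta_k(0)=0$. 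One must be slightly careful to check that $\SL\theta_{xx}$ contributes the term $-\lambda^{-1}(k\nu)^2\theta_k(0)\cos(k\nu x)$ with the correct sign and that no $k=0$ mode survives; this is the only bookkeeping point, and it is routine.

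Then I would invoke the standing hypothesis \eqref{ass:SL-spectrum}: since $0$ is not a Dirichlet eigenvalue of $\partial_y^2+\gamma'(\psi^\lambda)$ on $[-h,0]$, the problem \eqref{eq:LLtheta0=0} with $\theta_0(-h)=\theta_0(0)=0$ has only $\theta_0\equiv 0$, giving the first assertion. For $k\ge 1$ I would split on whether $(k\nu)^2$ lies in the Dirichlet spectrum of $\partial_y^2+\gamma'(\psi^\lambda)$. If it does, then a solution of \eqref{eq:LL1thetak=0} with $\theta_k(-h)=0$ is either identically zero or satisfies $\theta_k(0)=0$ (being a Dirichlet eigenfunction); in the latter case $\partial_y\theta_k(0)\ne 0$ (otherwise $\theta_k\equiv 0$ by ODE uniqueness), so the surface condition forces $\lambda=0$, contradicting $\lambda\ne 0$ — hence $\theta_k\equiv 0$, i.e.\ case (a). If $(k\nu)^2$ is not in the Dirichlet spectrum, then the space of solutions of \eqref{eq:LL1thetak=0} with $\theta_k(-h)=0$ is one-dimensional, spanned by $\beta^{-(k\nu)^2,\lambda}$ as in \eqref{eq:beta} (well-posedness of \eqref{eq:beta} being exactly the normalisation argument already given in the text via $\tilde\beta(0)\ne 0$), and substituting $\theta_k=c_k\beta^{-(k\nu)^2,\lambda}$ into the surface condition and dividing by $c_k$ (when $c_k\ne 0$) yields precisely $d(-(k\nu)^2,\lambda)=0$ with $d$ as in \eqref{eq:d(k,lambda)}; conversely if this dispersion relation holds then every multiple of $\beta^{-(k\nu)^2,\lambda}$ works. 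This is case (b). Assembling the three cases over all $k$ gives the claimed characterisation.

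The main obstacle is not any single hard estimate but rather the care needed in the regularity bootstrap and in the Fourier bookkeeping around the nonlocal operators $\Ch$ and $\Pro$: one has to be sure that the passage from \eqref{eq:Ltheta} to the pointwise/ODE form \eqref{eq:L2theta_rewritten}--\eqref{eq:L1theta_rewritten} is justified (which needs $\theta\in C^{2,\alpha}_\per$, itself a consequence of $\LL\theta=0$), and that the $k=0$ surface component genuinely drops out because of $\Pro$. Everything else is ODE theory — existence/uniqueness for linear second-order problems and the elementary fact that a boundary value problem with zero Dirichlet data has a nontrivial solution precisely when the relevant number is an eigenvalue.
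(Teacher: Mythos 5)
Your proof is correct and follows essentially the same route as the paper: bootstrap $\theta\in C^{2,\alpha}_\per$ from $\LL\theta=0$, pass to the pointwise/ODE form \eqref{eq:L2theta_rewritten}--\eqref{eq:L1theta_rewritten}, decouple into Fourier modes, dispose of the $k=0$ mode via \eqref{ass:SL-spectrum}, and split the $k\ge 1$ analysis on whether $(k\nu)^2$ is a Dirichlet eigenvalue. You actually spell out the Dirichlet-spectrum case (that $\theta_k(0)=0$ together with the surface condition and $\lambda\neq 0$ forces $\partial_y\theta_k(0)=0$ and hence $\theta_k\equiv 0$) in slightly more detail than the paper does, but the argument is the same.
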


\subsection{Range}
Before we proceed with the investigation of the transversality condition, we first prove that the range of $\LL$ can be written as an orthogonal complement with respect to a suitable inner product. This will be helpful later. To this end, we introduce the inner product
\[\langle(f_1,g_1),(f_2,g_2)\rangle\coloneqq\langle f_1',f_2'\rangle_{L^2([0,L])}+\langle\nabla g_1,\nabla g_2\rangle_{L^2(\Omega_h^*)}\]
for $f_1,f_2\in H^1_{0,\per}(\R)$, $g_1,g_2\in H^1_\per(\Omega_h)$; in order to avoid misunderstanding, we point out that the index \enquote{0} in $H^1_{0,\per}(\R)$ means \enquote{zero average} as before and not \enquote{zero boundary values}. This inner product is positive definite on the space
\[H^1_{0,\per}(\R)\times\left\{\tilde g\in H^1_\per(\Omega_h):\tilde g\big|_{y=-h}=0\right\}.\]
Notice that
\[\langle f_1',f_2'\rangle_{L^2([0,L])}=-\langle f_1,f_2''\rangle_{L^2([0,L])}\]
if $f_2\in H^2_\per(\R)$ and that
\[\langle\nabla g_1,\nabla g_2\rangle_{L^2(\Omega_h^*)}=-\langle g_1,\Delta g_2\rangle_{L^2(\Omega_h^*)}+\langle\SL g_1,\SL\partial_y g_2\rangle_{L^2([0,L])}\]
if $g_2\in H^2_\per(\Omega_h)$ and $g_1=0$ on $y=-h$.

Using \eqref{eq:Ltheta} we now compute for smooth $\theta,\vartheta\in Y$
\begin{align*}
	&\langle(\sigma\lambda^{-1}\SL\theta,\theta),\LL\vartheta\rangle\\
	&=-\langle\theta,\Delta(\vartheta-(\A_\phi\vartheta+V[\SL\vartheta]))\rangle_{L^2(\Omega_h^*)}+\langle\SL\theta,\SL\partial_y(\vartheta-(\A_\phi\vartheta+V[\SL\vartheta]))\rangle_{L^2([0,L])}\\
	&\phantom{=\;}-\langle\SL\theta,-\sigma\lambda^{-2}\SL\vartheta_{xx}-\Pro\SL\partial_y(\A_\phi\vartheta+V[S\vartheta])-\lambda^{-1}(\gamma(0)-\lambda^{-1}g)\SL\vartheta\rangle_{L^2([0,L])}\\
	&=\langle\nabla\theta,\nabla\vartheta\rangle_{L^2(\Omega_h^*)}-\langle\theta,\gamma'(\psi^\lambda)\vartheta\rangle_{L^2(\Omega_h^*)}-\sigma\lambda^{-2}\langle\SL\theta_x,\SL\vartheta_x\rangle_{L^2([0,L])}\\
	&\phantom{=\;}-\lambda^{-1}(\gamma(0)-\lambda^{-1}g)\langle\SL\theta,\SL\vartheta\rangle_{L^2([0,L])}
\end{align*}
making use of $\langle\SL\theta\rangle=0$. Noticing that the terms at the beginning and at the end of this computation only involve at most first derivatives of $\theta$ and $\vartheta$, an easy approximation argument shows that this relation also holds for general $\theta,\vartheta\in Y$. Moreover, since the last expression is symmetric in $\theta$ and $\vartheta$, we can also go in the opposite direction with reversed roles and arrive at the symmetry property
\[\langle(\sigma\lambda^{-1}\SL\theta,\theta),\LL\vartheta\rangle=\langle\LL\theta,(\sigma\lambda^{-1}\SL\vartheta,\vartheta)\rangle.\]
Thus, the range of $\LL$ is the orthogonal complement of $\{(\sigma\lambda^{-1}\SL\theta,\theta):\theta\in\ker\LL\}$ with respect to $\langle\cdot,\cdot\rangle$. Indeed, one inclusion is an immediate consequence of the symmetry property and the other inclusion follows from the facts that we already know that $\LL$ is Fredholm with index zero and that $\LL$ gains no additional kernel when extended to functions $\theta$ of class $H^1$.

\subsection{Transversality condition}
Assuming that the kernel is spanned by $\theta(x,y)=\beta^{-(k\nu)^2,\lambda}(y)\cos(k\nu x)$, we have to investigate whether $\LL_\lambda\theta$ is not in the range of $\LL$, which is equivalent to
\[\langle(\sigma\lambda^{-1}\SL\theta,\theta),\LL_\lambda\theta\rangle\neq 0\]
by the preceding considerations. Differentiating \eqref{eq:Ltheta} with respect to $\lambda$, for general $\theta$ it holds that
\begin{align*}
	\LL_{\lambda,1}\theta&=\lambda^{-2}\SL\theta-\sigma^{-1}\partial_x^{-2}\left(\Pro\SL\partial_y(\A_\phi\theta+V[\SL\theta])+\lambda\Pro\SL\partial_y\A_{\phi\lambda}\theta+\lambda^{-2}g\SL\theta\right),\\
	\LL_{\lambda,2}\theta&=-\A_{\phi\lambda}\theta,
\end{align*}
where $\A_{\phi\lambda}\theta\in C_\per^{2,\alpha}(\overline{\Omega_h})$ is the unique solution of
\begin{align*}
	\Delta(\A_{\phi\lambda}\theta)&=-\gamma''(\psi^\lambda)\partial_\lambda\psi^\lambda\theta&\text{in }\Omega_h,\\
	\A_{\phi\lambda}\theta&=0&\text{on }y=0\text{ and }y=-h.
\end{align*}
Thus, we have
\begin{align*}
	&\langle(\sigma\lambda^{-1}\SL\theta,\theta),\LL_\lambda\theta\rangle\\
	&=-\langle\theta,\Delta(-\A_{\phi\lambda}\theta)\rangle_{L^2(\Omega_h^*)}+\langle\SL\theta,-\SL\partial_y\A_{\phi\lambda}\theta\rangle_{L^2([0,L])}\\
	&\phantom{=\;}-\langle\sigma\lambda^{-1}\SL\theta,\lambda^{-2}\SL\theta_{xx}-\sigma^{-1}\left(\Pro\SL\partial_y(\A_\phi\theta+V[\SL\theta])+\lambda\Pro\SL\partial_y\A_{\phi\lambda}\theta+\lambda^{-2}g\SL\theta\right)\rangle_{L^2([0,L])}\\
	&=-\langle\theta,\gamma''(\psi^\lambda)\partial_\lambda\psi^\lambda\theta\rangle_{L^2(\Omega_h^*)}-\langle\SL\theta,2\sigma\lambda^{-3}\SL\theta_{xx}+(\lambda^{-2}\gamma(0)-2\lambda^{-3}g)\SL\theta\rangle_{L^2([0,L])}
\end{align*}
whenever $\LL_1\theta=0$. Now let $\theta(x,y)=\beta^{-(k\nu)^2,\lambda}(y)\cos(k\nu x)$ and notice that $f=\partial_\lambda\beta^{-(k\nu)^2,\lambda}$ solves
\[f_{yy}+(\gamma'(\psi^\lambda)-(k\nu)^2)f=-\gamma''(\psi^\lambda)\beta^{-(k\nu)^2,\lambda}\partial_\lambda\psi^\lambda\text{ on }[-h,0],\quad f(0)=f(-h)=0.\]
Therefore,
\begin{align*}
	&\frac2L\langle(\sigma\lambda^{-1}\SL\theta,\theta),\LL_\lambda\theta\rangle\\
	&=\int_{-h}^0\beta^{-(k\nu)^2,\lambda}\left(\partial_\lambda\beta^{-(k\nu)^2,\lambda}_{yy}+(\gamma'(\psi^\lambda)-(k\nu)^2)\partial_\lambda\beta^{-(k\nu)^2,\lambda}\right)\,dy+2\sigma\lambda^{-3}(k\nu)^2\\
	&\phantom{=\;}-\lambda^{-2}\gamma(0)+2\lambda^{-3}g\\
	&=\partial_\lambda\beta_y^{-(k\nu)^2,\lambda}(0)+2\sigma\lambda^{-3}(k\nu)^2-\lambda^{-2}\gamma(0)+2\lambda^{-3}g
\end{align*}
after integrating by parts. Thus, we have proved:
\begin{lemma}\label{lma:transversality_condition}
	Given $\lambda\neq 0$ and assuming that the kernel of $\LL(\lambda)$ is one-dimensional spanned by $\theta(x,y)=\beta^{-(k\nu)^2,\lambda}(y)\cos(k\nu x)$ for some $k\ge 1$, the transversality condition
	\[\LL_\lambda(\lambda)\theta\notin\im\LL(\lambda)\]
	is equivalent to
	\[d_\lambda(-(k\nu)^2,\lambda)\neq 0,\]
	with $d$ given in \eqref{eq:d(k,lambda)}.
\end{lemma}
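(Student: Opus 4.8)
The plan is to reduce the transversality condition to a scalar nonvanishing condition and then to recognise that scalar as $d_\lambda(-(k\nu)^2,\lambda)$. First I would invoke the description of the range obtained in the previous subsection: since $\LL(\lambda)$ is Fredholm of index zero and its range is, by the symmetry property established there, the $\langle\cdot,\cdot\rangle$-orthogonal complement of $\{(\sigma\lambda^{-1}\SL\vartheta,\vartheta):\vartheta\in\ker\LL(\lambda)\}$, which under our hypothesis is the line through $(\sigma\lambda^{-1}\SL\theta,\theta)$, the transversality condition $\LL_\lambda(\lambda)\theta\notin\im\LL(\lambda)$ holds if and only if
\[\langle(\sigma\lambda^{-1}\SL\theta,\theta),\LL_\lambda(\lambda)\theta\rangle\neq 0.\]

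Next I would simply quote the computation carried out immediately before the statement, which evaluates this pairing for $\theta(x,y)=\beta^{-(k\nu)^2,\lambda}(y)\cos(k\nu x)$ as
\[\frac{2}{L}\langle(\sigma\lambda^{-1}\SL\theta,\theta),\LL_\lambda(\lambda)\theta\rangle=\partial_\lambda\beta_y^{-(k\nu)^2,\lambda}(0)+2\sigma\lambda^{-3}(k\nu)^2-\lambda^{-2}\gamma(0)+2\lambda^{-3}g.\]
It then remains only to observe that the right-hand side is exactly $d_\lambda(-(k\nu)^2,\lambda)$: differentiating the defining expression $d(\mu,\lambda)=\beta_y^{\mu,\lambda}(0)+\sigma\lambda^{-2}\mu+\lambda^{-1}\gamma(0)-\lambda^{-2}g$ in $\lambda$ gives $\partial_\lambda\beta_y^{\mu,\lambda}(0)-2\sigma\lambda^{-3}\mu-\lambda^{-2}\gamma(0)+2\lambda^{-3}g$, which at $\mu=-(k\nu)^2$ coincides termwise with the expression above. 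Since $L>0$, the pairing is nonzero if and only if $d_\lambda(-(k\nu)^2,\lambda)\neq 0$, which together with the first paragraph yields the asserted equivalence.

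The genuine computational work is already contained in the lines preceding the statement, so what remains is bookkeeping; the only point that needs a word of care---and the one I expect to be the main, if modest, obstacle---is the meaningfulness of $\partial_\lambda\beta_y^{-(k\nu)^2,\lambda}(0)$ and hence of $d_\lambda(-(k\nu)^2,\lambda)$. Here I would note that the standing hypothesis that $\ker\LL(\lambda)$ is spanned by $\beta^{-(k\nu)^2,\lambda}(y)\cos(k\nu x)$ forces, via Lemma \ref{lma:kernel}(b), that $(k\nu)^2$ is not in the Dirichlet spectrum of $\partial_y^2+\gamma'(\psi^\lambda)$ on $[-h,0]$; since $\psi^\lambda$ depends in a $C^1$ manner on $\lambda$ (because $\gamma\in C_{\text{loc}}^{2,1}(\R)$) and Dirichlet eigenvalues depend continuously on the potential, this exclusion persists for $\lambda'$ in a neighbourhood of $\lambda$. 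Representing $\beta^{-(k\nu)^2,\lambda'}$ via the fundamental solution $\tilde\beta$ of the associated initial value problem as in the construction around \eqref{eq:beta}, and using smooth dependence of solutions of linear ODEs on parameters, one sees that $\lambda'\mapsto\beta_y^{-(k\nu)^2,\lambda'}(0)$ is $C^1$ near $\lambda$, which legitimises the differentiation of $d(-(k\nu)^2,\cdot)$. Finally, the identity that $f=\partial_\lambda\beta^{-(k\nu)^2,\lambda}$ solves the inhomogeneous boundary value problem used in that pre-statement computation is obtained by differentiating \eqref{eq:beta} in $\lambda$, and the ensuing integration by parts is routine.
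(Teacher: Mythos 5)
Your proposal is correct and follows the same route as the paper: reduce the transversality condition to the nonvanishing of the scalar $\langle(\sigma\lambda^{-1}\SL\theta,\theta),\LL_\lambda(\lambda)\theta\rangle$ via the orthogonality description of $\im\LL(\lambda)$, evaluate that pairing for $\theta=\beta^{-(k\nu)^2,\lambda}\cos(k\nu\cdot)$ by integration by parts, and identify the result with $\tfrac{L}{2}\,d_\lambda(-(k\nu)^2,\lambda)$. The only thing you leave implicit is the actual evaluation of the pairing (differentiating \eqref{eq:Ltheta} in $\lambda$, obtaining the boundary value problem for $\A_{\phi\lambda}\theta$, and integrating by parts against $\beta^{-(k\nu)^2,\lambda}$), which is precisely the substantive computation; your closing remarks on the $C^1$-dependence of $\beta_y^{-(k\nu)^2,\lambda}(0)$ on $\lambda$ and the persistence of $(k\nu)^2$ being outside the Dirichlet spectrum are a welcome addition that the paper leaves tacit.
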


\subsection{Result on local bifurcation}
We summarise our result of this section applying Theorem \ref{thm:CrandallRabinowitz}.
\begin{theorem}\label{thm:LocalBifurcation}
	Assume that there exists $\lambda_0\neq 0$ such that \eqref{ass:SL-spectrum} holds for $\lambda=\lambda_0$ and such that the dispersion relation
	\[d(-(k\nu)^2,\lambda_0)=0,\]
	with $d$ given by \eqref{eq:d(k,lambda)}, has exactly one solution $k_0\in\N$, and assume that the transversality condition
	\[d_\lambda(-(k_0\nu)^2,\lambda_0)\neq 0\]
	holds. Then there exists $\varepsilon>0$ and a $C^1$-curve $(-\varepsilon,\varepsilon)\ni s\mapsto(\lambda(s),w(s),\phi(s))$ with $(\lambda(0),w(0),\phi(0))=(\lambda_0,0,0)$, $w(s)\neq 0$ for $s\neq 0$, and $F(\lambda(s),w(s),\phi(s))=0$. Moreover, all solutions of $F(\lambda,w,\phi)=0$ in a neighbourhood of $(\lambda_0,0,0)$ are on this curve or are trivial. Furthermore, the curve admits the asymptotic expansion $(w(s),\phi(s))=s\T(\lambda_0)\theta+o(s)$, where
	\begin{align*}
		\theta(x,y)&=\beta^{-(k_0\nu)^2,\lambda_0}(y)\cos(k_0\nu x),\\
		[\T(\lambda_0)\theta](x,y)&=\left(-\frac{1}{\lambda_0},\beta^{-(k_0\nu)^2,\lambda_0}(y)-\frac{\psi_y^{\lambda_0}(y)\sinh(k_0\nu(y+h))}{\lambda_0\sinh(k_0\nu h)}\right)\cos(k_0\nu x).
	\end{align*}
	These solutions give rise to proper water waves, that is, additionally \eqref{eq:additional_requirements} is satisfied.
\end{theorem}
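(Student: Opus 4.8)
The plan is to apply the Crandall--Rabinowitz theorem (Theorem~\ref{thm:CrandallRabinowitz}) with $X$ as defined in Section~\ref{sec:Preliminaries}, $U=\Om$, and the operator $F$. The hypotheses of that theorem are checked as follows. First, $F(\cdot,0,0)=0$ because the points $(\lambda,0,0)$ are trivial solutions, and $F$ is of class $C^2$ near $(\lambda_0,0,0)$ by Lemma~\ref{lma:M_prop}. Second, we must produce a $\lambda_0$ for which $F_{(w,\phi)}(\lambda_0,0,0)$ is Fredholm of index zero with one-dimensional kernel, and verify transversality. Fredholmness of index zero was already established (since $F_{(w,\phi)}=\mathrm{Id}-\M_{(w,\phi)}$ with $\M_{(w,\phi)}$ compact). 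For the kernel, I would use the isomorphism $\T(\lambda_0)\colon Y\to X$ to pass from $\ker F_{(w,\phi)}(\lambda_0,0,0)$ to $\ker\LL(\lambda_0)$, and invoke Lemma~\ref{lma:kernel}: under assumption~\eqref{ass:SL-spectrum} for $\lambda=\lambda_0$, the kernel consists of Fourier modes $\theta_k(y)\cos(k\nu x)$ that are nonzero precisely when $d(-(k\nu)^2,\lambda_0)=0$, in which case $\theta_k$ is a multiple of $\beta^{-(k\nu)^2,\lambda_0}$. Since we assume the dispersion relation has exactly one solution $k_0\in\N$, the kernel of $\LL(\lambda_0)$ is one-dimensional, spanned by $\theta(x,y)=\beta^{-(k_0\nu)^2,\lambda_0}(y)\cos(k_0\nu x)$, hence so is $\ker F_{(w,\phi)}(\lambda_0,0,0)$, spanned by $\T(\lambda_0)\theta$. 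The transversality condition $F_{\lambda(w,\phi)}(\lambda_0,0,0)\,[\T(\lambda_0)\theta]\notin\im F_{(w,\phi)}(\lambda_0,0,0)$ is, after composing with $\T(\lambda_0)$ (an isomorphism, so it preserves the range complement), equivalent to $\LL_\lambda(\lambda_0)\theta\notin\im\LL(\lambda_0)$; by Lemma~\ref{lma:transversality_condition} this holds iff $d_\lambda(-(k_0\nu)^2,\lambda_0)\neq 0$, which is assumed.

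Having verified all hypotheses, Theorem~\ref{thm:CrandallRabinowitz} yields $\varepsilon>0$ and a $C^1$-curve $s\mapsto(\lambda(s),w(s),\phi(s))$ through $(\lambda_0,0,0)$ solving $F=0$, with the local uniqueness statement and the asymptotics $(w(s),\phi(s))=s\,\T(\lambda_0)\theta+o(s)$. The explicit form of $\T(\lambda_0)\theta$ displayed in the theorem is obtained by plugging $\SL\theta = \cos(k_0\nu x)$ (up to the constant, since $\beta^{-(k_0\nu)^2,\lambda_0}(0)=1$) into the formula for $\T(\lambda)$ and using the explicit representation~\eqref{eq:V_explicit} for $V[\SL\theta]$, which gives $V[\cos(k_0\nu x)](x,y)=\frac{\sinh(k_0\nu(y+h))}{\sinh(k_0\nu h)}\cos(k_0\nu x)$; one also needs $w(s)\neq 0$ for $s\neq 0$, which follows because the first component of $\T(\lambda_0)\theta$ is $-\frac{1}{\lambda_0}\cos(k_0\nu x)\neq 0$, so the $o(s)$ term cannot cancel it for small $s\neq 0$.

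The remaining point---and the one requiring a little care rather than a black-box citation---is that these bifurcating solutions are \emph{proper} water waves, i.e.\ that~\eqref{eq:additional_requirements} holds along the curve near $s=0$ (so that Lemma~\ref{lma:Reformulation} applies and $(w(s),(\phi(s)+\psi^{\lambda(s)})\circ H^{-1})$ genuinely solves~\eqref{eq:OriginalEquations}). Here I would argue by continuity: at $s=0$ we have $w=0$, so the surface is the flat line $\{(x,h):x\in\R\}$, for which the parametrisation $x\mapsto(x+(\Ch w)(x),w(x)+h)=(x,h)$ is clearly injective and $w=0>-h$ strictly. Both conditions in~\eqref{eq:additional_requirements} are open conditions in the $C^{1,\alpha}$-topology on $w$ (injectivity of the parametrised curve is stable under small $C^1$ perturbations since the flat line has nonvanishing tangent $(1,0)$ and no self-intersections, and $w>-h$ is manifestly open in $C^0$); since $s\mapsto w(s)$ is continuous into $C^{1,\alpha}_{0,\per,\e}(\R)$ with $w(0)=0$, there is $\varepsilon'\in(0,\varepsilon]$ such that~\eqref{eq:additional_requirements} holds for $|s|<\varepsilon'$. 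Shrinking $\varepsilon$ to $\varepsilon'$ completes the proof. I expect the main obstacle to be purely bookkeeping: making sure the compositions with the isomorphism $\T(\lambda_0)$ are carried through consistently when transferring the kernel and the transversality condition, and confirming that $\T$ is well-defined and invertible precisely at $\lambda=\lambda_0\neq 0$ (which is exactly the standing assumption $\lambda_0\neq 0$).
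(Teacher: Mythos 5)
Your proposal is correct and follows essentially the same route as the paper: invoke Lemmas~\ref{lma:M_prop}, \ref{lma:kernel}, and \ref{lma:transversality_condition}, transfer the kernel and transversality statements through the isomorphism $\T(\lambda_0)$, apply Theorem~\ref{thm:CrandallRabinowitz}, and use the asymptotic expansion plus openness of \eqref{eq:additional_requirements} to get $w(s)\neq 0$ and the proper-wave property after shrinking $\varepsilon$. One small point worth tightening: since $\LL(\lambda)=F_{(w,\phi)}(\lambda,0,0)\circ\T(\lambda)$ with a $\lambda$-dependent isomorphism, the chain/product rule gives $\LL_\lambda(\lambda_0)\theta=F_{\lambda(w,\phi)}(\lambda_0,0,0)\T(\lambda_0)\theta+F_{(w,\phi)}(\lambda_0,0,0)\T_\lambda(\lambda_0)\theta$, and the equivalence of the two transversality conditions relies on the observation that the second summand lies in $\im F_{(w,\phi)}(\lambda_0,0,0)=\im\LL(\lambda_0)$ — not merely on $\T(\lambda_0)$ being an isomorphism, as your phrasing suggests.
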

\begin{proof}
	It is straightforward to apply Theorem \ref{thm:CrandallRabinowitz} in view of Lemmas \ref{lma:M_prop}, \ref{lma:kernel}, and \ref{lma:transversality_condition}, since $F_{(w,\phi)}(\lambda_0,0,0)$ coincides with $\LL(\lambda_0)$ up to the isomorphism $\T(\lambda_0)$. Moreover, the asymptotic expansion tells us that $w(s)\neq 0$ after possibly shrinking $\varepsilon$. Furthermore, after again possibly shrinking $\varepsilon$, we see that also \eqref{eq:additional_requirements} is satisfied along the curve.
\end{proof}

\section{Conditions for local bifurcation}
In this section we further study the conditions on local bifurcation in Theorem \ref{thm:LocalBifurcation}.

In general, one cannot exclude the possibility of kernels of dimension larger than one. In some cases, which we will study below, the kernel can be at most two-dimensional or is exactly one-dimensional. However, in general kernels of arbitrarily large dimension should be expected to appear; we refer at this point to the discussions in \cite{AasVar18,EHR09} for the case without surface tension.

\subsection{General conclusions}\label{sec:general_conclusions}
In order to put hands on $\beta^{\mu,\lambda}_y(0)$, the usual strategy \cite{BBCW04,Varholm20} is to introduce the Prüfer angle $\vartheta=\vartheta(y,\mu)$ corresponding to the initial value problem
\begin{gather*}
-u''-\gamma'(\psi^\lambda)u=\mu u\text{ on }[-h,0],\\
u(-h)=0,\quad u'(-h)=1,
\end{gather*}
as the unique continuous representative of $\mathrm{arg}(u'+iu)$ with $\vartheta(-h,\mu)=\mathrm{arg}(1)=0$. Thus,
\[\beta^{\mu,\lambda}_y(0)=\cot(\vartheta(0,\mu)),\]
and one is, for fixed $\lambda$, left to search for intersections of this quantity with the straight line
\begin{align*}
	A(\mu)\coloneqq-\sigma\lambda^{-2}\mu-\lambda^{-1}\gamma(0)+\lambda^{-2}g
\end{align*}
when studying the dispersion relation. This is the same as to search for eigenvalues $\mu$ of the following Sturm--Liouville problem with eigenvalue-dependent boundary condition:
\begin{subequations}\label{eq:spectral}
	\begin{align}
		-\beta''-\gamma'(\psi^\lambda)\beta&=\mu \beta\quad\text{ on }[-h,0],\\
		\beta(-h)&=0,\\
		-\sigma^{-1}\lambda^2 \beta'(0)+\sigma^{-1}\lambda^2 (\lambda^{-2}g -\lambda^{-1}\gamma(0))\beta(0)&=\mu\beta(0).
	\end{align}
\end{subequations}
Now \cite[Thm. 2.1.]{BBCW04} yields the following result.
\begin{proposition}\label{thm:branches}
	For fixed $\lambda\ne0$, the problem \eqref{eq:spectral} has discrete spectrum, bounded from below, and accumulating only at $+\infty$. Moreover, one of the following alternatives occur:
	\begin{enumerate}[label=(\roman*)]
		\item All eigenvalues are real, there are algebraically two eigenvalues on the initial
branch, $\mathcal B_0$, of the Prüfer graph, and all other branches contain precisely one
simple eigenvalue.
		\item All eigenvalues are real, $\mathcal B_0$ contains no eigenvalues, but, for some $k>0$, $\mathcal B_k$ contains algebraically three eigenvalues and all other branches contain
precisely one simple eigenvalue.
		\item There are two non-real eigenvalues appearing as a conjugate pair, $\mathcal B_0$ contains no eigenvalues, and all other branches contain precisely one simple eigenvalue.
	\end{enumerate}
	Here, $\mathcal B_0$, $\mathcal B_1$, $\ldots$ denote the branches of the graph of $\mu\mapsto\cot(\vartheta(0,\mu))=\beta^{\mu,\lambda}_y(0)$, listed from left to right.
\end{proposition}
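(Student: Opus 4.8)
The plan is to reduce Proposition \ref{thm:branches} directly to the abstract Sturm--Liouville result \cite[Thm. 2.1.]{BBCW04}, which classifies the spectrum of a regular Sturm--Liouville problem with an eigenvalue-dependent boundary condition of the form $a\beta'(0)+b\beta(0)=\mu(c\beta'(0)+d\beta(0))$ subject to the Herglotz/sign condition $\delta\coloneqq bc-ad>0$ (or $<0$). First I would put \eqref{eq:spectral} into their normal form: the differential expression is $-\beta''-\gamma'(\psi^\lambda)\beta=\mu\beta$ on $[-h,0]$, which is regular because $\gamma'(\psi^\lambda)\in C([-h,0])$ by the assumption $\gamma\in C^{2,1}_{\mathrm{loc}}(\R)$ together with continuity of $\psi^\lambda$; the left endpoint carries the separated condition $\beta(-h)=0$; and the right endpoint carries the $\mu$-dependent condition, which in the notation above corresponds to $c=0$, $d=1$ (so that $c\beta'(0)+d\beta(0)=\beta(0)$), $a=-\sigma^{-1}\lambda^2$, $b=\sigma^{-1}\lambda^2(\lambda^{-2}g-\lambda^{-1}\gamma(0))$. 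Then $\delta=bc-ad=-a=\sigma^{-1}\lambda^2>0$, so the hypothesis of \cite[Thm. 2.1.]{BBCW04} is met for every $\lambda\ne0$, and their theorem gives at once that the spectrum is discrete, bounded below, accumulating only at $+\infty$, with exactly the trichotomy (i)--(iii) stated.

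Second, I would translate the eigenvalue count into the language of the Pr\"ufer graph $\mu\mapsto\cot(\vartheta(0,\mu))=\beta^{\mu,\lambda}_y(0)$. The eigenvalues of \eqref{eq:spectral} are precisely the $\mu$ for which $\cot(\vartheta(0,\mu))=A(\mu)$, where $A(\mu)=-\sigma\lambda^{-2}\mu-\lambda^{-1}\gamma(0)+\lambda^{-2}g$ is the affine function from \eqref{eq:d(k,lambda)}; this is just the restatement already carried out in the text preceding the proposition, using $\beta^{\mu,\lambda}_y(0)=\cot(\vartheta(0,\mu))$ and the definition of $d(\mu,\lambda)$. The branches $\mathcal B_0,\mathcal B_1,\ldots$ are the maximal intervals of $\mu$ on which $\vartheta(0,\cdot)$ stays strictly between two consecutive multiples of $\pi$ (so that $\cot\circ\vartheta(0,\cdot)$ is a continuous, strictly decreasing function running from $+\infty$ to $-\infty$ on each branch except possibly the leftmost one, which starts at a finite value determined by $\vartheta(-h,\mu)=0$). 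Since $A$ has negative slope $-\sigma\lambda^{-2}<0$, on each full branch $\mathcal B_k$, $k\ge1$, the strictly decreasing graph of $\cot\circ\vartheta(0,\cdot)$ meets the line $A$ in at least one point; \cite[Thm. 2.1.]{BBCW04} refines this to the statement that, apart from the one anomalous branch identified in (i), (ii), or (iii), every such crossing is transverse and simple, giving exactly one simple eigenvalue per branch, while the anomalous branch carries algebraic multiplicity two (case (i), on $\mathcal B_0$) or three (case (ii), on some $\mathcal B_k$), or is replaced by a conjugate pair of non-real eigenvalues off the real axis (case (iii)), with $\mathcal B_0$ then empty. I would simply cite this and note that the ordering ``from left to right'' of the $\mathcal B_k$ is consistent with the ordering used in \cite{BBCW04} because the Pr\"ufer angle at the left endpoint is fixed to $0$ by our normalisation $u(-h)=0$, $u'(-h)=1$.

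The only genuinely non-routine point is verifying that our boundary data really do fall under the hypotheses of \cite[Thm. 2.1.]{BBCW04}: one must check the sign condition $\delta>0$ (done above, $\delta=\sigma^{-1}\lambda^2>0$ since $\sigma>0$ and $\lambda\ne0$), and one must make sure the regularity assumptions on the coefficient (continuity, or at worst $L^1$, of $\gamma'(\psi^\lambda)$ on the compact interval $[-h,0]$) are satisfied, which is immediate here. I expect the main obstacle to be purely expository: stating precisely which normalisation of the Pr\"ufer branches is used so that the indexing $\mathcal B_0,\mathcal B_1,\dots$ matches the indexing in the dispersion relation $d(-(k\nu)^2,\lambda)=0$ later on. Once the dictionary is fixed, the proposition is an immediate corollary, and I would close the proof with one sentence to that effect.
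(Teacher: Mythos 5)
Your proposal is essentially the paper's own proof: the paper states only ``Now \cite[Thm. 2.1.]{BBCW04} yields the following result'' immediately after introducing the Pr\"ufer angle and the eigenvalue problem \eqref{eq:spectral}, and you have supplied exactly the routine verification (recasting \eqref{eq:spectral} in the normal form $a\beta'(0)+b\beta(0)=\mu(c\beta'(0)+d\beta(0))$ with $c=0$, $d=1$, $a=-\sigma^{-1}\lambda^2$, and computing the determinant $\sigma^{-1}\lambda^2>0$) that justifies that citation. One small caveat worth fixing: you hedge on which sign of the determinant triggers the trichotomy in \cite{BBCW04}; since the affine function $A(\mu)$ is strictly \emph{decreasing} (slope $-\sigma\lambda^{-2}<0$), it is the non-Herglotz (anti-Nevanlinna) direction, and your determinant has the sign that puts you in that case, so the trichotomy --- rather than the simpler real-and-simple dichotomy --- is what applies; you should state this outright rather than leaving the alternative open.
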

In \cite{Varholm20} (using $z=-\mu$ instead of $\mu$ and with the length of the interval scaled to $1$) it was proved that $\beta^{\mu,\lambda}_y(0)$ is strictly monotonically decreasing in $\mu$ on each branch and satisfies the bounds
\begin{align}\label{eq:bounds_beta}
	v(\mu+\sup\gamma')\le\beta^{\mu,\lambda}_y(0)\le v(\mu+\inf\gamma')
\end{align}
with
\[v(z)\coloneqq\sqrt{-z}\coth(h\sqrt{-z})\]
on the (possibly empty) intervals
\[I_j=\begin{cases}(j^2\pi^2/h^2-\inf\gamma',(j+1)^2\pi^2/h^2-\sup\gamma'),&j\in\N,\\(-\infty,\pi^2/h^2-\sup\gamma'),&j=0.\end{cases}\]
With this, the following theorem can be proved.
\begin{proposition}\label{thm:large lambda}
	For $|\lambda|$ sufficiently large, there is at least one solution $\mu$ of $d(\mu,\lambda)=0$ on the leftmost branch of $\mu\mapsto\beta^{\mu,\lambda}_y(0)$, and the lowest such solution tends to $-\infty$ as $|\lambda|\to\infty$.
\end{proposition}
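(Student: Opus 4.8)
The plan is to locate a solution of the dispersion relation $d(\mu,\lambda)=0$ on the leftmost Prüfer branch $\mathcal{B}_0$ by combining the two-sided bound \eqref{eq:bounds_beta} for $\beta^{\mu,\lambda}_y(0)$ with the explicit, $\lambda$-independent behaviour of the comparison function $v(z)=\sqrt{-z}\coth(h\sqrt{-z})$. Recall that $d(\mu,\lambda)=0$ means precisely $\beta^{\mu,\lambda}_y(0)=A(\mu)$, where $A(\mu)=-\sigma\lambda^{-2}\mu-\lambda^{-1}\gamma(0)+\lambda^{-2}g$ is an affine function of $\mu$ whose slope $-\sigma\lambda^{-2}$ tends to $0$ and whose value at any fixed $\mu$ tends to $0$ as $|\lambda|\to\infty$. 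On the interval $I_0=(-\infty,\pi^2/h^2-\sup\gamma')$, which is contained in the leftmost branch, \eqref{eq:bounds_beta} sandwiches $\beta^{\mu,\lambda}_y(0)$ between $v(\mu+\sup\gamma')$ and $v(\mu+\inf\gamma')$; both of these are continuous, strictly decreasing functions of $\mu$ on $I_0$ that run from $+\infty$ (as $\mu\to-\infty$) down to a finite value at the right endpoint. This monotone decay from $+\infty$ is the feature we exploit.

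First I would fix a reference point, say $\mu_*\in I_0$, and observe that for $|\lambda|$ large the line $A$ is nearly flat and nearly zero there, so in particular $A(\mu_*)$ lies strictly below $v(\mu_*+\inf\gamma')$, hence below $\beta^{\mu_*,\lambda}_y(0)$ would be too weak — instead I use the lower bound: pick $\mu_*$ so that $v(\mu_*+\sup\gamma')$ is some fixed positive number $M>0$; then $\beta^{\mu_*,\lambda}_y(0)\ge M$, while $A(\mu_*)\to 0$, so for $|\lambda|$ large enough $\beta^{\mu_*,\lambda}_y(0)>A(\mu_*)$. Next I would send $\mu\to-\infty$ along $I_0$: there the upper bound gives $\beta^{\mu,\lambda}_y(0)\le v(\mu+\inf\gamma')$, and $v(\mu+\inf\gamma')\sim\sqrt{-\mu-\inf\gamma'}$ grows only like $\sqrt{-\mu}$, whereas $A(\mu)=-\sigma\lambda^{-2}\mu+O(1)$ grows linearly in $-\mu$. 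Hence for each fixed $\lambda\ne 0$ there is $\mu$ sufficiently negative with $A(\mu)>v(\mu+\inf\gamma')\ge\beta^{\mu,\lambda}_y(0)$. Since $\beta^{\mu,\lambda}_y(0)$ is continuous (and strictly decreasing) on the whole branch $I_0$ while $A$ is continuous, the intermediate value theorem yields a solution $\mu(\lambda)\in I_0$ of $d(\mu,\lambda)=0$ on the leftmost branch, for every $|\lambda|$ above the threshold found in the previous step.

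For the second assertion — that the lowest such solution tends to $-\infty$ — I would argue by contradiction or, more directly, by an a priori location estimate. Suppose $\mu(\lambda)$ is a solution on $\mathcal{B}_0$; then $A(\mu(\lambda))=\beta^{\mu(\lambda),\lambda}_y(0)\ge v(\mu(\lambda)+\sup\gamma')\ge 0$ whenever $\mu(\lambda)\le -\sup\gamma'$ (so that the argument of $v$ is nonnegative in the relevant sense). From $-\sigma\lambda^{-2}\mu(\lambda)-\lambda^{-1}\gamma(0)+\lambda^{-2}g=\beta^{\mu(\lambda),\lambda}_y(0)\ge v(\mu(\lambda)+\sup\gamma')$ and the lower bound $v(z)\ge\sqrt{-z}$ for $z\le 0$, one gets $-\sigma\lambda^{-2}\mu(\lambda)+O(|\lambda|^{-1})\ge\sqrt{-\mu(\lambda)-\sup\gamma'}$, i.e. $-\mu(\lambda)\ge c\,\lambda^{2}\big(\sqrt{-\mu(\lambda)}+O(1)\big)-O(|\lambda|)$ for a constant $c>0$; solving this inequality for $-\mu(\lambda)$ shows $-\mu(\lambda)\to\infty$ as $|\lambda|\to\infty$, and in fact $-\mu(\lambda)\gtrsim\lambda^{4}$ up to lower-order corrections. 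A clean way to package this is: the lowest solution $\mu_{\min}(\lambda)$ satisfies $\beta^{\mu_{\min},\lambda}_y(0)=A(\mu_{\min})$, the left side is $\le v(\mu_{\min}+\inf\gamma')=O(\sqrt{-\mu_{\min}})$, the right side is $\ge -\sigma\lambda^{-2}\mu_{\min}-C$, so $-\sigma\lambda^{-2}\mu_{\min}\le C+O(\sqrt{-\mu_{\min}})$; if $-\mu_{\min}$ stayed bounded along a subsequence $\lambda_n\to\infty$ this would force $0\le C$ in the limit — not yet a contradiction — so one must instead use the lower bound $\beta^{\mu_{\min},\lambda}_y(0)\ge v(\mu_{\min}+\sup\gamma')$ together with the fact that on $\mathcal{B}_0$ the quantity $v(\mu_{\min}+\sup\gamma')$ is bounded below by a positive constant once $\mu_{\min}$ is bounded, while $A(\mu_{\min})\to 0$; that contradiction shows $\mu_{\min}(\lambda)$ cannot remain bounded, hence $\mu_{\min}(\lambda)\to-\infty$.

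The main obstacle, and the point requiring the most care, is the bookkeeping of which branch the constructed solution lies on and the uniform-in-$\lambda$ handling of the endpoints: the bounds \eqref{eq:bounds_beta} are only valid on the intervals $I_j$, which exclude the Dirichlet eigenvalues of $-\partial_y^2-\gamma'(\psi^\lambda)$ where $\beta^{\mu,\lambda}_y(0)$ blows up, so I must make sure the intermediate-value argument is run strictly inside $I_0$ and that the threshold on $|\lambda|$ is chosen so that $A$ crosses the graph of $\beta^{\cdot,\lambda}_y(0)$ before reaching the right endpoint of $I_0$ (equivalently, that $\mathcal{B}_0$ is genuinely the leftmost branch and that $A$ has not already overtaken $\beta^{\cdot,\lambda}_y(0)$ at the far left for moderate $|\lambda|$). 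Once the geometry of the Prüfer graph near $\mathcal{B}_0$ is pinned down via Proposition \ref{thm:branches} and the monotonicity/bounds from \cite{Varholm20}, the rest is a routine asymptotic comparison of a line of small slope with a function of square-root growth.
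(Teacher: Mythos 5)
Your proof is correct and follows essentially the same route as the paper's: the paper also fixes a reference point $\mu_0<-\sup\gamma'$ with $v(\mu_0+\sup\gamma')=1$, uses $A(\mu_0)\to 0$ against the lower bound $\beta_y^{\mu_0,\lambda}(0)\ge 1$, combines this with the comparison of the $\sqrt{-\mu}$ upper bound against the linear growth of $A$ and the intermediate value theorem, and deduces the second assertion from the same boundedness contradiction. The only small point to make explicit in your last step is that $\mu_{\min}\le\mu_*$ (which you already have from the first part, since a solution exists in $(-\infty,\mu_*)$), because $v(\cdot+\sup\gamma')$ is \emph{not} positive on all of $\mathcal B_0$; with $\mu_{\min}\le\mu_*$ fixed one gets $v(\mu_{\min}+\sup\gamma')\ge v(\mu_*+\sup\gamma')=M>0$, which is the uniform lower bound your contradiction needs.
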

\begin{proof}
	First we note that the slope of $A$ and the value of $A$ at $\mu=0$ approaches $0$ as $|\lambda|\to\infty$. Moreover, on the leftmost branch, $\beta^{\mu,\lambda}_y(0)$ is strictly monotonically decreasing, converges to $-\infty$ as $\mu$ approaches the first Dirichlet eigenvalue of $-\partial_y^2-\gamma'(\psi^\lambda)$, and satisfies the in $\lambda$ uniform bounds \eqref{eq:bounds_beta}, which in particular imply that it behaves like $\sqrt{-\mu}$ for $-\mu$ large. From these observations the first assertion follows for $|\lambda|$ large enough such that $A(\mu_0)<1$, where $\mu_0<-\sup\gamma'$ is such that $v(\mu_0+\sup\gamma')=1$, and the second assertion follows easily.
\end{proof}

Now we want to say something more about the conditions for local bifurcation imposing sign conditions on $\gamma'$ and/or $\gamma''$. 
In fact, it will be sufficient to assume that $\gamma'$ is \enquote*{not too positive}.

\paragraph{Dispersion relation}

Note that in the irrotational setting, the kernel is at most two-dimensional; see \cite{Jones89}. We begin by generalising this result.
\begin{proposition}
	If $\sup \gamma'\le \frac{\pi^2}{h^2}$, then there are at most two non-positive solutions $\mu$ to the equation $d(\mu, \lambda)=0$.
\end{proposition}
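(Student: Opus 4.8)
The plan is to analyze the function $\mu\mapsto\beta_y^{\mu,\lambda}(0)$ on the interval $(-\infty,0]$ and exploit the structure provided by Proposition~\ref{thm:branches} together with the monotonicity and bounds \eqref{eq:bounds_beta}. Recall that solving $d(\mu,\lambda)=0$ amounts to finding intersections of the graph of $\mu\mapsto\beta_y^{\mu,\lambda}(0)$ with the straight line $A(\mu)=-\sigma\lambda^{-2}\mu-\lambda^{-1}\gamma(0)+\lambda^{-2}g$, which has \emph{negative} slope $-\sigma\lambda^{-2}<0$ since $\sigma>0$. The key observation is that under the hypothesis $\sup\gamma'\le\pi^2/h^2$, the interval $I_0=(-\infty,\pi^2/h^2-\sup\gamma')$ contains all of $(-\infty,0]$, so the lower bound in \eqref{eq:bounds_beta}, namely $\beta_y^{\mu,\lambda}(0)\ge v(\mu+\inf\gamma')$, holds on the whole negative axis. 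This means that on the leftmost branch $\mathcal B_0$ the function $\beta_y^{\mu,\lambda}(0)$ stays bounded below by $v(\mu+\inf\gamma')$, which tends to $+\infty$ as $\mu\to-\infty$; hence $\mathcal B_0$ cannot have a vertical asymptote going to $-\infty$ for finite $\mu$, i.e.\ there is no Dirichlet eigenvalue of $-\partial_y^2-\gamma'(\psi^\lambda)$ in $(-\infty,0]$ except possibly the first one, and in fact the bound forces $\mathcal B_0$ to cover all of $(-\infty,\mu_1)$ where $\mu_1$ is the first Dirichlet eigenvalue, with $\mu_1\ge\pi^2/h^2-\sup\gamma'\ge 0$.

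The next step is to rule out branches other than $\mathcal B_0$ contributing non-positive solutions. Since $\mu_1\ge 0$, all branches $\mathcal B_k$ with $k\ge 1$ live in $\mu>\mu_1\ge0$ (they are listed from left to right and $\mathcal B_0$ already exhausts $(-\infty,\mu_1)$), so they are irrelevant for non-positive $\mu$. Therefore every non-positive solution of $d(\mu,\lambda)=0$ lies on $\mathcal B_0$, which is the graph of a single continuous, strictly decreasing function of $\mu$ on $(-\infty,\mu_1)$. Now count intersections of a strictly decreasing function with a strictly decreasing line: generically this could be any number, so strict monotonicity alone is not enough — I need convexity/concavity information or the finer algebraic multiplicity count from Proposition~\ref{thm:branches}. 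The cleanest route is the multiplicity statement: by Proposition~\ref{thm:branches}, the spectral problem \eqref{eq:spectral} has, counting algebraic multiplicity, \emph{at most two} eigenvalues on $\mathcal B_0$ in every one of the three alternatives (two on $\mathcal B_0$ in case (i), zero in cases (ii) and (iii)). Since solutions of $d(\mu,\lambda)=0$ with $\mu$ on $\mathcal B_0$ are precisely the eigenvalues of \eqref{eq:spectral} lying on $\mathcal B_0$, and all non-positive solutions lie on $\mathcal B_0$, there are at most two non-positive solutions, counted with multiplicity; in particular at most two distinct ones.

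The main obstacle I anticipate is making precise the claim that $\mathcal B_0$ — the leftmost branch of the Prüfer graph — is exactly the graph over $(-\infty,\mu_1)$ and that the eigenvalue count on $\mathcal B_0$ from Proposition~\ref{thm:branches} is genuinely the count of solutions of $d(\mu,\lambda)=0$ restricted to that branch, including algebraic multiplicities coming from the eigenvalue-dependent boundary condition. One must check that a solution $\mu\le 0$ of $d(\mu,\lambda)=0$ is not in the Dirichlet spectrum (so $\beta^{\mu,\lambda}$ is well-defined there) — this follows from $\mu_1\ge0$ — and that the pairing between zeros of $d(\cdot,\lambda)$ and eigenvalues of \eqref{eq:spectral} respects multiplicity, which is standard for such problems (the characteristic function of \eqref{eq:spectral} is, up to a nonvanishing factor, $\mu\mapsto\beta_y^{\mu,\lambda}(0)-A(\mu)$, i.e.\ essentially $d(\mu,\lambda)$ times $\lambda^2\sigma^{-1}$). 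Once this bookkeeping is in place, the result follows immediately from Proposition~\ref{thm:branches}. An alternative, more self-contained argument avoiding the multiplicity subtleties would use the bounds \eqref{eq:bounds_beta} to sandwich $\beta_y^{\mu,\lambda}(0)$ between the two concave functions $v(\mu+\sup\gamma')$ and $v(\mu+\inf\gamma')$ and argue about crossings of the line $A$, but pushing that through rigorously to a clean ``at most two'' still seems to need the branch structure, so I would present the Proposition~\ref{thm:branches}-based proof as the primary one.
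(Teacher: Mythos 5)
Your proposal is correct and takes essentially the same route as the paper's (one-line) proof: all non-positive $\mu$ lie on the leftmost branch $\mathcal B_0$ because the bounds \eqref{eq:bounds_beta} together with $\sup\gamma'\le\pi^2/h^2$ force the first Dirichlet eigenvalue to be $\ge 0$, and Proposition \ref{thm:branches} caps the eigenvalue count on $\mathcal B_0$ at two. (One small slip: the \emph{lower} bound in \eqref{eq:bounds_beta} is $v(\mu+\sup\gamma')$, not $v(\mu+\inf\gamma')$, but this does not affect the argument.)
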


\begin{proof}
	It follows from Proposition \ref{thm:branches} that there are at most two solutions on the leftmost branch of $\mu \mapsto \beta_y^{\mu, \lambda}(0)$ and by assumption and \eqref{eq:bounds_beta} this branch has non-negative right endpoint.
\end{proof}

\begin{remark}\label{rem:two_sol}
	If there appears the case that $d(-(k\nu)^2,\lambda)=0$ has exactly two different solutions $k_1,k_2\in\N$, $k_1<k_2$, one can handle this problem as follows: If $k_2/k_1\notin\N$, then one can consider, from the beginning, only functions with period $L/k_1$ and $L/k_2$, respectively, to obtain two different solution curves made up of functions with period $L/k_1$ and $L/k_2$, respectively. If, on the other hand, $k_2/k_1\in\N$, then this procedure can only be done in the space of functions with period $L/k_2$, yielding only one corresponding curve. Similar arguments apply in the case of higher-dimensional kernels (which might appear if $\sup\gamma'>\frac{\pi^2}{h^2}$).
\end{remark}

It is also of interest to know when we can expect to find a single negative solution. In the irrotational case, a well known criterion is that the Froude number is greater than one, or equivalently, $\lambda^2>gh$. This can also be generalised.

\begin{proposition}
	Assume that $\sup \gamma'< \frac{\pi^2}{h^2}$ and that 
	\[
	\lambda^{-2}g -\lambda^{-1}\gamma(0)< 
	\begin{cases} 
	\sqrt{\sup \gamma'} \cot (h\sqrt{\sup \gamma'}) & \text{if } \sup \gamma'>0,\\
	\frac{1}{h} & \text{if } \sup \gamma'=0,\\
	\sqrt{|\sup \gamma'|} \coth (h\sqrt{|\sup \gamma'}|) & \text{if } \sup \gamma'<0.
	\end{cases}
	\]
	Then there is precisely one negative solution $\mu_0$ to the equation $d(\mu, \lambda)=0$. In particular,
	\begin{enumerate}[label=(\roman*)]
		\item the dispersion relation $d(-(k\nu)^2,\lambda)=0$ has in $\N$ the unique solution $k_0$ if $k_0\in\N$ and $\nu>0$ are chosen such that $-(k_0\nu)^2=\mu_0$;
		\item if $\sup \gamma' < \frac{\pi^2}{4h^2}$, this criterion is satisfied for sufficiently large $|\lambda|$.
	\end{enumerate}
\end{proposition}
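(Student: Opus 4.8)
The plan is to reduce the claim to a statement about the graph of $\mu\mapsto\beta_y^{\mu,\lambda}(0)$ versus the straight line $A(\mu)=-\sigma\lambda^{-2}\mu-\lambda^{-1}\gamma(0)+\lambda^{-2}g$, using the dictionary already established: $d(\mu,\lambda)=0$ precisely when $\beta_y^{\mu,\lambda}(0)=A(\mu)$ (on the set where $\mu$ is not in the Dirichlet spectrum, the only place a solution can occur). Since $\sup\gamma'<\pi^2/h^2$, the interval $I_0=(-\infty,\pi^2/h^2-\sup\gamma')$ contains $0$, so the \emph{leftmost} branch $\mathcal B_0$ of the Prüfer graph covers all of $(-\infty,0]$ (its right endpoint is the first Dirichlet eigenvalue, which by \eqref{eq:bounds_beta} is $\ge\pi^2/h^2-\sup\gamma'>0$). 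On this branch $\beta_y^{\mu,\lambda}(0)$ is strictly decreasing in $\mu$; it tends to $+\infty$ as $\mu\to-\infty$ and, at $\mu=0$, equals $\cot(\vartheta(0,0))$, for which the lower bound in \eqref{eq:bounds_beta} gives
\[
\beta_y^{0,\lambda}(0)\ge v(\sup\gamma')=\sqrt{\sup\gamma'}\,\coth\!\bigl(h\sqrt{\sup\gamma'}\bigr),
\]
with the understanding that $v(\sup\gamma')$ means $1/h$ when $\sup\gamma'=0$ and $\sqrt{|\sup\gamma'|}\coth(h\sqrt{|\sup\gamma'|})$ when $\sup\gamma'>0$ via the identity $\sqrt{-z}\coth(h\sqrt{-z})=\sqrt{z}\cot(h\sqrt{z})$ for $z>0$. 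Meanwhile $A(0)=\lambda^{-2}g-\lambda^{-1}\gamma(0)$, which by hypothesis is strictly less than this same quantity, so $A(0)<\beta_y^{0,\lambda}(0)$.

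Next I would compare slopes and behaviour at $-\infty$. The line $A$ has slope $-\sigma\lambda^{-2}<0$, i.e. it is strictly decreasing; but by the lower bound $\beta_y^{\mu,\lambda}(0)\ge v(\mu+\sup\gamma')\sim\sqrt{-\mu}$ as $\mu\to-\infty$, the branch $\mathcal B_0$ dominates any line for $-\mu$ large, so $\beta_y^{\mu,\lambda}(0)-A(\mu)\to+\infty$ as $\mu\to-\infty$. At $\mu=0$ we have just shown $\beta_y^{0,\lambda}(0)-A(0)>0$ as well. To get exactly one crossing on $(-\infty,0)$ rather than merely an even number, the cleanest route is monotonicity of the \emph{difference}: since $\beta_y^{\mu,\lambda}(0)$ is strictly decreasing on $\mathcal B_0$ and $A$ is strictly decreasing, this does not immediately work, so instead I would argue with the \emph{second} intersection point. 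By Proposition \ref{thm:branches} the branch $\mathcal B_0$ carries algebraically two eigenvalues (counted as solutions of $d(\mu,\lambda)=0$ with $A$), and no branch to the left of it exists; since $\beta_y^{\mu,\lambda}(0)-A(\mu)$ is positive at both ends $\mu\to-\infty$ and $\mu=0^-$, either there are no crossings in $(-\infty,0)$ and both eigenvalues on $\mathcal B_0$ are $\ge 0$, or there are exactly two (a tangency counting double, or two transversal ones), or — and this is what we must rule out — there is exactly one simple crossing in $(-\infty,0)$ together with one in $[0,\infty)$. But a single transversal crossing would force a sign change of $\beta_y^{\mu,\lambda}(0)-A(\mu)$, contradicting positivity at both ends; hence the number of solutions (with multiplicity) of $d(\mu,\lambda)=0$ in $(-\infty,0)$ is even, i.e. $0$ or $2$, and by the algebraic count of two on $\mathcal B_0$, if it is $0$ then both lie in $[0,\infty)$, whereas if it is $2$ both the eigenvalues on $\mathcal B_0$ are negative. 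To force the value $2$ I use the following comparison at $\mu=0$: on $\mathcal B_0$, the existence of two eigenvalues (Proposition \ref{thm:branches}(i), the only alternative available when $\mathcal B_0$ is nonempty and $\gamma'$ is not too positive, since (ii),(iii) have $\mathcal B_0$ empty) together with strict monotonicity means the two eigenvalues $\mu_1\le\mu_2$ straddle nothing — they are just the two solutions of $\beta_y^{\mu,\lambda}(0)=A(\mu)$ on that branch. Because $A(0)$ lies strictly \emph{below} $\beta_y^{0,\lambda}(0)$, and $A$ decreases while on $(0,\text{Dirichlet eigenvalue})$ the branch also decreases but stays $\ge v(\mu+\sup\gamma')$ which is $>A(\mu)$ there (this needs a short check using $\sup\gamma'<\pi^2/h^2$ and the explicit form of $v$), there is \emph{no} solution in $[0,\infty)$ on $\mathcal B_0$; combined with the algebraic count, both eigenvalues are negative, and since $\beta_y-A$ is positive at $-\infty$ and at $0^-$, they coincide — a double root — or there are exactly two. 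In fact the generic picture is one simple negative root; degeneracy would require $d_\lambda$ to vanish too, and re-examining, the correct statement to extract from \eqref{eq:bounds_beta} is that the upper bound $\beta_y^{\mu,\lambda}(0)\le v(\mu+\inf\gamma')$ pins the \emph{leftmost} eigenvalue from below, giving a \emph{single} negative $\mu_0$ once $A(0)$ is below the curve. I would therefore replace the slightly loose count above by: the function $g(\mu):=\beta_y^{\mu,\lambda}(0)-A(\mu)$ is real-analytic on $\mathcal B_0\cap(-\infty,0]$, positive at both endpoints, and by the bounds \eqref{eq:bounds_beta} sandwiched between $v(\mu+\sup\gamma')-A(\mu)$ and $v(\mu+\inf\gamma')-A(\mu)$; a direct convexity/monotonicity analysis of $v$ (which is convex and decreasing) versus the affine $A$ shows each sandwiching function has at most one zero, and the hypothesis $A(0)<v(\sup\gamma')$ forces exactly one zero of the lower-bound function, hence exactly one zero $\mu_0<0$ of $g$.

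For the two addenda: statement (i) is immediate — given that $\mu_0$ is the unique negative solution of $d(\mu,\lambda)=0$, if one picks $k_0\in\N$ and $\nu>0$ with $-(k_0\nu)^2=\mu_0$ then $d(-(k\nu)^2,\lambda)=0$ forces $-(k\nu)^2$ to be a negative solution of $d(\cdot,\lambda)=0$, hence equal to $\mu_0=-(k_0\nu)^2$, hence $k=k_0$. For statement (ii), as $|\lambda|\to\infty$ both the slope $-\sigma\lambda^{-2}$ of $A$ and its intercept $A(0)=\lambda^{-2}g-\lambda^{-1}\gamma(0)$ tend to $0$; the right-hand side of the hypothesised inequality is a fixed positive number when $\sup\gamma'>0$, namely $\sqrt{\sup\gamma'}\cot(h\sqrt{\sup\gamma'})$, which is positive precisely when $h\sqrt{\sup\gamma'}<\pi/2$, i.e. $\sup\gamma'<\pi^2/(4h^2)$ — exactly the extra hypothesis in (ii); and it is $1/h>0$ when $\sup\gamma'=0$ and positive when $\sup\gamma'<0$. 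So under $\sup\gamma'<\pi^2/(4h^2)$ the right-hand side is a strictly positive constant, while $A(0)\to0$, so the strict inequality holds for all $|\lambda|$ large.

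The main obstacle I anticipate is the \emph{uniqueness} (not merely parity) of the negative solution: Proposition \ref{thm:branches} only gives an \emph{algebraic} count of two on $\mathcal B_0$, and a priori these could be a genuine pair, a double root, or one negative plus one non-negative. Pinning them both to the negative axis — equivalently, excluding any solution of $d(\mu,\lambda)=0$ with $\mu\ge0$ on the leftmost branch — is exactly where the quantitative hypothesis $A(0)<v(\sup\gamma')$ must be combined carefully with the monotonicity of $\beta_y^{\mu,\lambda}(0)$ and the explicit bound \eqref{eq:bounds_beta} on the whole interval $(0,\text{first Dirichlet eigenvalue})$, using $\sup\gamma'<\pi^2/h^2$ to guarantee this interval is where $v(\mu+\sup\gamma')$ stays above the line. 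Turning "two algebraic eigenvalues, both forced negative, with $g>0$ at the endpoints" into "exactly one simple negative eigenvalue" then requires the convexity of $v$ to ensure the sandwich functions cannot oscillate; I expect this convexity argument to be the only genuinely delicate computation.
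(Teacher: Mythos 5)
Your overall plan — compare the Prüfer graph $\mu\mapsto\beta_y^{\mu,\lambda}(0)$ on the leftmost branch $\mathcal B_0$ with the line $A(\mu)$ and count crossings — is the same as the paper's. Parts (i) and (ii) are handled correctly: (i) follows by uniqueness of $\mu_0$, and for (ii) you correctly observe that the right-hand side of the hypothesis is a fixed positive constant precisely when $\sup\gamma'<\pi^2/(4h^2)$, while $A(0)\to 0$ as $|\lambda|\to\infty$.

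However, the core of your argument contains a sign error that then cascades. You assert that $\beta_y^{\mu,\lambda}(0)-A(\mu)\to+\infty$ as $\mu\to-\infty$ because $\beta_y$ ``dominates any line''. This is backwards: $A(\mu)=-\sigma\lambda^{-2}\mu-\lambda^{-1}\gamma(0)+\lambda^{-2}g$ has strictly negative slope, so $A(\mu)\to+\infty$ \emph{linearly} as $\mu\to-\infty$, whereas the bounds \eqref{eq:bounds_beta} give $\beta_y^{\mu,\lambda}(0)\sim\sqrt{-\mu}$. Hence the line dominates, and $\beta_y^{\mu,\lambda}(0)-A(\mu)\to-\infty$. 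With the correct signs the picture is: $A>\beta_y$ near $\mu=-\infty$, $A<\beta_y$ at $\mu=0$ (your hypothesis), and $A>\beta_y$ as $\mu$ approaches the first Dirichlet eigenvalue $\mu_1>0$ (since $\beta_y\to-\infty$ there). This gives an \emph{odd} number of crossings in $(-\infty,0)$ and an \emph{odd} number in $(0,\mu_1)$, hence at least one in each. Since this in particular puts eigenvalues on $\mathcal B_0$, alternative (i) of Proposition \ref{thm:branches} must hold, i.e.\ $\mathcal B_0$ carries \emph{algebraically exactly two}, so there is exactly one simple crossing in each of the two intervals, and the negative one is the unique $\mu_0$. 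Your ``even parity'' claim, and the subsequent detour through tangency, double roots, and a convexity argument for $v$, is all built on the wrong asymptotics, and as you yourself concede it does not close the argument. The paper's route is genuinely simpler: one sign change forces one crossing on each side of the origin, and the algebraic count of two caps it.
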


\begin{proof}
	The first assumption guarantees that the leftmost branch of $\mu \mapsto \beta_y^{\mu, \lambda}(0)$ has  positive right endpoint, and the second that $A(0)<\beta_y^{0, \lambda}(0)$. Arguing as in the proof of Proposition \ref{thm:large lambda} we find that there must be one intersection point between $A(\mu)$ and $\beta_y^{\mu, \lambda}(0)$ to the left of the origin and one to the right. Finally, if $\sup \gamma'<\frac{\pi^2}{4h^2}$ then $\beta_y^{0, \lambda}(0)$ is necessarily positive, so that the second condition is satisfied for large $|\lambda|$.
\end{proof}

Note that this reduces to the condition $\lambda^2>gh$ in the irrotational case and to the supercriticality condition
$\lambda^{-2}g -\lambda^{-1} \gamma < 1/h$ in the case of constant vorticity $\gamma$.

\paragraph{Transversality condition}

We next try to find a pretty general sufficient condition for the transversality condition. For this, we will make use of the following general lemma.
\begin{lemma}\label{lma:max_principle}
	Let $c\in C([-h,0])$ with $\sup c<\frac{\pi^2}{4h^2}$, $u\in C^2((-h,0))\cap C([-h,0])$, and $Lu\coloneqq-u''-cu$. We have:
	\begin{enumerate}[label=(\roman*)]
		\item If $Lu\le0$ on $(-h,0)$, $u(-h)=0$, and $u(0)>0$, then $u'(0)>0$.
		\item If $Lu\ge0$ on $(-h,0)$, $u(-h)=0$, and $u(0)\ge0$, then $u\ge0$ on $[-h,0]$.
		\item If $Lu\ge0$ on $(-h,0)$, $u(0)=0$, and $u'(0)>0$, then $u<0$ on $[-h,0)$.
	\end{enumerate}
\end{lemma}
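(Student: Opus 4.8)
The plan is to conjugate $L$ with a carefully chosen positive weight so as to absorb the (possibly positive) potential $c$ and obtain an operator whose zeroth order coefficient is \emph{strictly positive}; for such operators all three assertions are instances of the classical weak maximum principle, comparison principle, and Hopf boundary point lemma. Fix $\omega\in(0,\frac{\pi}{2h})$ with $\omega^2>\sup c$ — possible precisely because $\sup c<\frac{\pi^2}{4h^2}$ — and set $\varphi(y):=\cos(\omega y)$. Then $\varphi>0$ on $[-h,0]$ (as $\omega h<\frac\pi2$), $\varphi(0)=1$, $\varphi'(0)=0$, the ratio $\varphi'/\varphi=-\omega\tan(\omega y)$ is bounded and continuous on $[-h,0]$, and $L\varphi=(\omega^2-c)\varphi=:q\varphi$ with $q=\omega^2-c\ge\omega^2-\sup c>0$ on $[-h,0]$. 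Writing $v:=u/\varphi\in C^2((-h,0))\cap C([-h,0])$, a direct computation using $L\varphi=q\varphi$ yields $\varphi^{-1}Lu=Pv$ on $(-h,0)$, where $P:=-\partial_y^2-2\frac{\varphi'}{\varphi}\partial_y+q$. Since $\varphi>0$, the sign of $Lu$ coincides with that of $Pv$; moreover $v(-h)$ has the sign of $u(-h)$ (vanishing iff $u(-h)$ does), $v(0)=u(0)$, and $v'(0)=u'(0)$ because $\varphi(0)=1$ and $\varphi'(0)=0$. Hence it suffices to prove (i)--(iii) with $L,u$ replaced by $P,v$.

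The single observation doing all the work is: if $Pv\ge0$ on an open subinterval and the minimum of $v$ over its closure is \emph{negative}, attained at an interior point $y_0$, this is impossible, for there $v'(y_0)=0$, $v''(y_0)\ge0$, whence $Pv(y_0)=-v''(y_0)+q(y_0)v(y_0)<0$; dually, $Pv\le0$ precludes a \emph{positive} interior maximum. Part (ii) follows at once: with $Pv\ge0$, $v(-h)=0$, $v(0)\ge0$, the function $v$ cannot be negative anywhere without a negative interior minimum. Part (iii) follows by a connectedness argument: from $v(0)=0$ and $v'(0)>0$ one has $v<0$ immediately to the left of $0$; if $y^\ast:=\inf\{y:v<0\text{ on }(y,0)\}$ satisfies $y^\ast>-h$, then $v(y^\ast)=0=v(0)$ with $v<0$ on $(y^\ast,0)$, producing a negative interior minimum on $[y^\ast,0]$ — contradiction; hence $y^\ast=-h$, and $v(-h)=0$ would likewise give a negative interior minimum on $[-h,0]$, so $v<0$, i.e.\ $u<0$, on $[-h,0)$.

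Part (i) is the delicate case. Here $Pv\le0$, $v(-h)=0$, $v(0)>0$; by the dual observation $\max_{[-h,0]}v$ is attained neither at an interior point with positive value nor at $-h$, so $\max v=v(0)$ is attained only at $y=0$, whence $v'(0)\ge0$, and this must be upgraded to $v'(0)>0=u'(0)$. This is precisely the Hopf boundary point lemma for $P$ (bounded coefficients, $q>0$), which I would invoke, or prove by a short barrier argument: if $v'(0)=0$ then $v\ge\frac12 v(0)$ near $0$, so $Pv\le0$ forces $v''+2\frac{\varphi'}{\varphi}v'\ge\frac12 q\,v(0)>0$ there, and since $v'\to v'(0)=0$ while $\varphi'/\varphi$ is bounded, $v''\ge\mathrm{const}>0$ in a left neighbourhood of $0$, so $v'<v'(0)=0$ and $v$ is strictly decreasing there, contradicting $v<v(0)$ on $[-h,0)$.

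I expect the reduction step, and the bookkeeping around the hypothesis, to be the main point: demanding $\varphi'(0)=0$ (so that $u'(0)=v'(0)$ in (i)) while keeping $\varphi>0$ on an interval of length $h$ forces $\omega<\pi/(2h)$, which is available only under $\sup c<\pi^2/(4h^2)$ — this is exactly where the stated constant is used. A minor technical caveat is that (i) and (iii) tacitly require $u$ to be differentiable at $y=0$ (and the direct argument in (i) uses $u\in C^1$ up to $0$), which I read as implicit in the statement.
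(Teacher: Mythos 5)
Your proof is correct and follows the same overall strategy as the paper — conjugate $L$ by a positive multiple of a cosine so that the transformed operator has nonnegative zeroth-order coefficient and bounded first-order coefficient, then invoke maximum principles. The one substantive difference is the placement of the cosine. You center it at $y=0$, i.e.\ $\varphi(y)=\cos(\omega y)$ with $\varphi'(0)=0$, so that $u'(0)=v'(0)$; this is the natural choice if one wants the boundary derivative to pass through the change of variables unchanged, but it forces you to establish the \emph{strict} inequality $v'(0)>0$ in part (i), which is the Hopf boundary point lemma, and you correctly supply a barrier argument for it. The paper instead shifts the cosine slightly to the right, taking $w(y)=\cos(q(y-y_0))$ with $y_0>0$ small, at the cost of needing $\sup c<\frac{\pi^2}{4(h+y_0)^2}$ (hence the strict inequality in the hypothesis). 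This buys $w'(y)>0$ on all of $[-h,0]$, and then part (i) becomes immediate from the weak maximum principle alone: $v(0)=\max v$ gives $v'(0)\ge0$, and $u'(0)=v'(0)w(0)+v(0)w'(0)>0$ because the second term is strictly positive. So the paper trades a small tilt of the barrier for not having to prove a Hopf lemma. Both routes use the constant $\frac{\pi^2}{4h^2}$ in exactly the same way, and your parts (ii) and (iii) match the paper's (with your connectedness argument in (iii) being a slightly more explicit rendering of the paper's appeal to the strong maximum principle). Your closing caveat is also fair: the lemma implicitly assumes $u'(0)$ exists in (i) and (iii), which holds in all the applications since those $u$ solve ODEs and are $C^2$ up to the boundary.
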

\begin{proof}
	Let $y_0>0$ be sufficiently small so that $\sup c < \frac{\pi^2}{4(h+y_0)^2}$  and choose  $q>0$ so that
	\[
	\sup c < q^2< \frac{\pi^2}{4(h+y_0)^2}.
	\]
	Then the function $w(y)=\cos (q(y-y_0))$ satisfies $w(y)>0$ and $w'(y)>0$ on $[-h,0]$, and
	\[
	-w''-q^2 w=0.
	\]
	Moreover, an easy computation shows that $v\coloneqq u/w$ satisfies
	\[Mv\coloneqq-v''-\frac{2w'}{w}v'+(q^2-c)v=\frac{Lu}{w}\quad\mathrm{on\ }(-h,0).\]
	Now, to this equation we can apply standard maximum principles as $q^2-c\ge0$.
	
	In the situation of (i), we have $Mv\le0$ on $(-h,0)$, $v(-h)=0$, and $v(0)>0$. By the weak maximum principle, it follows that $v(0)=\max v$. Therefore, $v'(0)\ge0$ and consequently $u'(0)=v'(0)w(0)+v(0)w'(0)>0$.
	
	In the situation of (ii), we have $Mv\ge0$ on $(-h,0)$, $v(-h)=0$, and $v(0)\ge0$. By the weak maximum principle, it follows that $v\ge0$ and thus $u\ge0$ on $[-h,0]$.
	
	In the situation of (iii), we have $Mv\ge0$ on $(-h,0)$, $v(0)=0$, and $v'(0)=u'(0)/w(0)>0$. By the strong maximum principle, it follows that $v$ cannot have an interior minimum point on any $(a,0)$, $-h\le a<0$. Hence, $v$ is monotonically increasing on $[-h,0]$ because of $v'(0)>0$. In particular, $v<v(0)=0$ and thus $u<0$ on $[-h,0)$.
\end{proof}
Now let us assume that $\sup \gamma' < \frac{\pi^2}{4h^2}$, and moreover $\mu\le 0$. Observe that $\beta=\beta^{\mu,\lambda}$ satisfies
\[
-\beta''-(\gamma'(\psi^\lambda)+\mu)\beta=0, \quad \beta(-h)=0, \quad \beta(0)=1.
\]
By Lemma \ref{lma:max_principle}(i),(ii), it follows that $\beta'(0)>0$ and $\beta\ge0$ on $[-h,0]$.
Moreover, since
\[
-(\partial_\lambda \psi^\lambda)''-\gamma'(\psi^\lambda)\partial_\lambda \psi^\lambda=0,\quad \partial_\lambda \psi^\lambda(0)=0, \quad (\partial_\lambda \psi^\lambda)'(0)=1,
\]
we find that $\partial_\lambda \psi^\lambda<0$ on $[-h, 0)$ applying Lemma \ref{lma:max_principle}(iii).
Finally, note that $\partial_\lambda \beta$ satisfies 
\[
-\partial_\lambda \beta''-(\gamma'(\psi^\lambda)+\mu)\partial_\lambda \beta=\gamma''(\psi^\lambda)\partial_\lambda \psi^\lambda \beta, \quad 
\partial_\lambda \beta(-h)=0, \quad \partial_\lambda \beta(0)=0.
\]
Assuming that $\gamma''\le 0$, the right hand side is non-negative and  we can apply Lemma \ref{lma:max_principle}(ii) to conclude that $\partial_\lambda \beta\ge 0$ on $[-h, 0]$ and therefore $(\partial_\lambda \beta)'(0) \le 0$. 
Next note that
\[
\gamma(0)=-\lambda \beta'(0)+\lambda^{-1}(-\sigma\mu+g)
\]
if $d(\mu,\lambda)=0$ holds. Substituting this into the transversality condition, we get
\begin{align*}
d_\lambda(\mu,\lambda)&=\partial_\lambda \beta'(0) +2\lambda^{-3}(-\sigma\mu+g)-\lambda^{-2} \gamma(0)\\
&=\partial_\lambda \beta'(0) +\lambda^{-1}\beta'(0)+\lambda^{-3}(-\sigma\mu+g)<0
\end{align*}
if $\lambda<0$. On the other hand, if $\gamma'' \ge 0$ and $\lambda>0$, we get by the same argument that $d_\lambda(\mu,\lambda)>0$.
In the special case of affine vorticity, we have $\gamma''\equiv 0$, and so the argument works irrespective of the sign of $\lambda$.

To summarise, we have proved the following.
\begin{theorem}
	Assume $\sup \gamma'< \frac{\pi^2}{4h^2}$, $\mu\le0$, and
	\begin{enumerate}[label=(\roman*)]
		\item $\gamma''\le0$ and $\lambda<0$, or
		\item $\gamma''\ge0$ and $\lambda>0$.
	\end{enumerate}
	Then $d_\lambda(\mu,\lambda)\ne0$ provided $d(\mu,\lambda)=0$.
\end{theorem}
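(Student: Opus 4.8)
The plan is to compute $d_\lambda(\mu,\lambda)$ explicitly, use the relation $d(\mu,\lambda)=0$ to eliminate $\gamma(0)$, and then read off a definite sign for the resulting expression by means of the maximum principles collected in Lemma \ref{lma:max_principle}. Concretely, I would first differentiate \eqref{eq:d(k,lambda)} in $\lambda$ to obtain $d_\lambda(\mu,\lambda)=\partial_\lambda\beta_y^{\mu,\lambda}(0)+2\lambda^{-3}(g-\sigma\mu)-\lambda^{-2}\gamma(0)$, and then substitute $\gamma(0)=-\lambda\beta_y^{\mu,\lambda}(0)+\lambda^{-1}(g-\sigma\mu)$, which is just $d(\mu,\lambda)=0$ solved for $\gamma(0)$. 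This collapses the formula to
\[
d_\lambda(\mu,\lambda)=\partial_\lambda\beta_y^{\mu,\lambda}(0)+\lambda^{-1}\beta_y^{\mu,\lambda}(0)+\lambda^{-3}(g-\sigma\mu).
\]
Since $g>0$, $\sigma>0$ and $\mu\le0$, the last term $\lambda^{-3}(g-\sigma\mu)$ has the sign of $\lambda$.

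Next I would pin down the remaining two signs. The hypothesis of Lemma \ref{lma:max_principle} is available since $\mu\le0$ gives $\sup(\gamma'(\psi^\lambda)+\mu)\le\sup\gamma'<\tfrac{\pi^2}{4h^2}$. Writing $\beta=\beta^{\mu,\lambda}$, which solves $-\beta''-(\gamma'(\psi^\lambda)+\mu)\beta=0$, $\beta(-h)=0$, $\beta(0)=1$, parts (i) and (ii) give $\beta_y^{\mu,\lambda}(0)>0$ and $\beta\ge0$ on $[-h,0]$, so the middle term of $d_\lambda$ also has the sign of $\lambda^{-1}$, i.e. again of $\lambda$. Differentiating the Cauchy problem \eqref{eq:trivial_sol} in $\lambda$ shows that $\partial_\lambda\psi^\lambda$ solves $-(\partial_\lambda\psi^\lambda)''-\gamma'(\psi^\lambda)\partial_\lambda\psi^\lambda=0$, $\partial_\lambda\psi^\lambda(0)=0$, $(\partial_\lambda\psi^\lambda)_y(0)=1$, so part (iii) yields $\partial_\lambda\psi^\lambda<0$ on $[-h,0)$.

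It remains to control $\partial_\lambda\beta_y^{\mu,\lambda}(0)$. Differentiating \eqref{eq:beta} in $\lambda$ shows that $\partial_\lambda\beta$ solves
\[
-(\partial_\lambda\beta)''-(\gamma'(\psi^\lambda)+\mu)\,\partial_\lambda\beta=\gamma''(\psi^\lambda)\,\partial_\lambda\psi^\lambda\,\beta,\qquad\partial_\lambda\beta(-h)=\partial_\lambda\beta(0)=0.
\]
Because $\partial_\lambda\psi^\lambda\le0$ and $\beta\ge0$ on $[-h,0]$, the right-hand side has the sign of $-\gamma''$. In case (i), where $\gamma''\le0$, it is $\ge0$, so part (ii) applied to $\partial_\lambda\beta$ gives $\partial_\lambda\beta\ge0$ on $[-h,0]$; since $\partial_\lambda\beta(0)=0$ this forces $\partial_\lambda\beta_y^{\mu,\lambda}(0)\le0$. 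In case (ii), where $\gamma''\ge0$, the same argument applied to $-\partial_\lambda\beta$ gives $\partial_\lambda\beta\le0$ on $[-h,0]$ and hence $\partial_\lambda\beta_y^{\mu,\lambda}(0)\ge0$.

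Putting the three signs together: in case (i) ($\lambda<0$) the summands of $d_\lambda(\mu,\lambda)$ are $\le0$, $<0$, $<0$, so $d_\lambda(\mu,\lambda)<0$; in case (ii) ($\lambda>0$) they are $\ge0$, $>0$, $>0$, so $d_\lambda(\mu,\lambda)>0$. In either case $d_\lambda(\mu,\lambda)\ne0$, as claimed. I do not anticipate a genuine obstacle here: the whole argument is sign bookkeeping, and the only points needing care are verifying that $\mu\le0$ is precisely what keeps the zero-order coefficient below $\tfrac{\pi^2}{4h^2}$ so that Lemma \ref{lma:max_principle} applies, and keeping straight whether to invoke that lemma for $\beta$, for $\partial_\lambda\psi^\lambda$, for $\partial_\lambda\beta$, or for $-\partial_\lambda\beta$.
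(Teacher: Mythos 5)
Your proof is correct and follows essentially the same route as the paper: eliminate $\gamma(0)$ via $d(\mu,\lambda)=0$ to reduce $d_\lambda$ to $\partial_\lambda\beta_y^{\mu,\lambda}(0)+\lambda^{-1}\beta_y^{\mu,\lambda}(0)+\lambda^{-3}(g-\sigma\mu)$, then fix the signs of the three summands with Lemma \ref{lma:max_principle} applied to $\beta$, $\partial_\lambda\psi^\lambda$, and $\pm\partial_\lambda\beta$ exactly as the authors do. The sign bookkeeping and the use of $\mu\le0$ to keep the zero-order coefficient below $\pi^2/(4h^2)$ both match the paper's argument.
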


It should be noted that the condition $\sup\gamma'< \frac{\pi^2}{4h^2}$ still allows for waves with critical layers. If we, for example, take 
\[
\gamma(\psi)=a\psi+b
\]
with $a<0$ and $b>0$, then
\[
\psi_y^\lambda(y)=\lambda\cosh(\sqrt{-a}y)-\frac{b}{\sqrt{-a}}\sinh(\sqrt{-a}y),
\]
which changes sign if 
\[
-\frac{b}{\sqrt{-a}} \tanh(\sqrt{-a}h)<\lambda<0.
\]

\subsection{Case of unidirectional flows}\label{sec:ConditionsLocalBifurcation_unidirectional}
We now want to study further the conditions for local bifurcation under the assumption that the trivial stream function $\psi^\lambda$ is a strictly decreasing function of $y$; thus, necessarily $\lambda<0$. In this case the change of variables $p=-\psi^\lambda(y)$ yields the differential equation
\[G_{pp}=-\gamma(-p)G_p^3\text{ on }[p_0,0]\]
for $G(p)=y+h$ corresponding to $\psi^\lambda$; here, $p_0=m(\lambda)<0$. Moreover, the boundary conditions for $\psi^\lambda$ translate to boundary conditions for $G$:
\[G(p_0)=0,\quad G_p(0)=-1/\lambda.\]
Thus,
\begin{align*}
G(p)=\int_{p_0}^p\frac{ds}{\sqrt{\lambda^2+2\Gamma(s)}},\quad\text{where }\Gamma(p)\coloneqq\int_0^p\gamma(-s)\,ds,\quad p_0\le p\le 0,
\end{align*}
provided $\lambda^2>-2\min_{[p_0,0]}\Gamma$. Let us denote $a\coloneqq\sqrt{\lambda^2+2\Gamma}=1/G_p$; in particular, $a(0)=-\lambda$.

We now prove that, when considering the dispersion relation, a certain well-known Sturm--Liouville problem can be investigated equivalently; this Sturm--Liouville problem, in turn, was studied in detail in \cite{Wahlen06b}.
\begin{proposition}
	Let $\lambda<0$, $\mu\in\R$, and $p_0\coloneqq m(\lambda)$. Suppose that $\psi^\lambda$ is strictly decreasing on $[-h,0]$ and that $\lambda^2>-2\min_{[p_0,0]}\Gamma$. Then, $d(\mu,\lambda)=0$ if and only if the Sturm--Liouville problem\begin{subequations}\label{eq:SturmLiouville}
		\begin{align}
		(a^3v_p)_p&=-\mu av\quad\mathrm{on\ }[p_0,0],\label{eq:SturmLiouville1}\\
		a^3(0)v_p(0)&=(-\sigma\mu+g)v(0),\label{eq:SturmLiouville2}\\
		v(p_0)&=0,\label{eq:SturmLiouville3}
		\end{align}	
	\end{subequations}
	has a nontrivial solution.
\end{proposition}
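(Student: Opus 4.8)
The plan is to relate the two boundary value problems by the change of variables $p = -\psi^\lambda(y)$, under which the interval $[-h,0]$ in $y$ corresponds to $[p_0,0]$ in $p$, with $y=-h$ going to $p=p_0$ and $y=0$ going to $p=0$. Recall $\beta = \beta^{\mu,\lambda}$ solves $\beta_{yy} + (\gamma'(\psi^\lambda)+\mu)\beta = 0$ on $[-h,0]$ with $\beta(-h)=0$, $\beta(0)=1$. The first step is to write $\beta(y) = v(p(y))$ and compute how the ODE transforms. Using $p_y = -\psi^\lambda_y = a$ (since $a = 1/G_p$ and $G_p = dy/dp$), one has $\beta_y = a v_p$ and $\beta_{yy} = a(a v_p)_p$. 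For the zeroth-order coefficient, note that $\gamma'(\psi^\lambda) = \gamma'(-p)$ and, differentiating $a^2 = \lambda^2 + 2\Gamma(p)$ with $\Gamma(p) = \int_0^p \gamma(-s)\,ds$, we get $(a^2)_p = 2\gamma(-p)$, hence $a a_p = \gamma(-p)$ and $(a^2)_{pp} = 2\gamma(-p)_p \cdot(-1)\cdot(-1)$... more carefully, $\frac{d}{dp}\gamma(-p) = -\gamma'(-p)$. The key identity to extract is that multiplying the transformed equation by an appropriate power of $a$ turns $a(av_p)_p + (\gamma'(-p)+\mu)v = 0$ into the Sturm–Liouville form $(a^3 v_p)_p = -\mu a v$; this should follow from $(a^3 v_p)_p = a^3 v_{pp} + 3a^2 a_p v_p$ and matching against $a^2 \cdot [a(av_p)_p] = a^2[a^2 v_{pp} + a a_p v_p]$, using $a a_p = \gamma(-p)$ to absorb the first-order term correctly. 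So the first main step is this computation showing \eqref{eq:SturmLiouville1} is equivalent to the $\beta$-equation.

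The second step is to match the boundary conditions. The condition $\beta(-h)=0$ becomes $v(p_0)=0$, which is \eqref{eq:SturmLiouville3}. For the top, $\beta(0)=1$ is merely a normalisation; the spectral content of $d(\mu,\lambda)=0$ is the relation $\beta_y(0) = -\sigma\lambda^{-2}\mu - \lambda^{-1}\gamma(0) + \lambda^{-2}g$ from \eqref{eq:d(k,lambda)}. Using $\beta_y(0) = a(0) v_p(0) = -\lambda v_p(0)$ (recall $a(0)=-\lambda$) and $v(0) = \beta(0) = 1$, and multiplying through by $a(0)^3 = -\lambda^3$ divided appropriately: from $\beta_y(0) = -\lambda v_p(0)$ we get $-\lambda v_p(0) = \lambda^{-2}(-\sigma\mu + g) - \lambda^{-1}\gamma(0)$, and since $\gamma(0) = \gamma(-p)|_{p=0}$ and $a a_p = \gamma(-p)$ gives $a(0)a_p(0) = \gamma(0)$, i.e. $-\lambda a_p(0) = \gamma(0)$; one should check whether the $\gamma(0)$-term is absorbed into the definition of $p_0$ or whether the boundary condition \eqref{eq:SturmLiouville2} as stated already accounts for it — comparing, \eqref{eq:SturmLiouville2} reads $a^3(0) v_p(0) = (-\sigma\mu + g)v(0)$, i.e. $-\lambda^3 v_p(0) = -\sigma\mu + g$, which rearranges to $-\lambda v_p(0) = \lambda^{-2}(-\sigma\mu+g)$, so the $\gamma(0)$ term must be accounted for elsewhere — presumably through $p_0 = m(\lambda)$ depending on $\lambda$ and the precise form of $\Gamma$. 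I would verify this matching carefully, as it is the one place where a stray term could appear.

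The third and final step is to argue the equivalence at the level of \enquote{nontrivial solution exists}. Given the ODE and boundary-condition correspondence, a nontrivial solution $v$ of \eqref{eq:SturmLiouville} pulls back to a nontrivial solution of the $\beta$-system not normalised at $0$; dividing by $v(0)$ (which is nonzero precisely when $\mu$ is not a Dirichlet eigenvalue — in the Dirichlet-eigenvalue case one checks separately that both sides of the claimed equivalence fail, matching the convention $d(\mu,\lambda)=\infty$ from the Remark) recovers $\beta^{\mu,\lambda}$ and the relation $d(\mu,\lambda)=0$; conversely $d(\mu,\lambda)=0$ gives $\beta^{\mu,\lambda}$ which pushes forward to a nontrivial $v$. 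The main obstacle I anticipate is purely bookkeeping: getting every power of $a$ and every sign right in the transformation of the ODE and, especially, in the top boundary condition, since the definitions of $d$, $a$, $\Gamma$, and $p_0$ all interlock there; there is no conceptual difficulty once the change of variables is set up, and the hypothesis $\lambda^2 > -2\min_{[p_0,0]}\Gamma$ is exactly what guarantees $a>0$ so that the substitution is a genuine diffeomorphism and \eqref{eq:SturmLiouville} is a regular Sturm–Liouville problem.
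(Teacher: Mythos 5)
Your proposed substitution is the wrong one, and this is not just bookkeeping: the paper's proof sets $v = B\,G_p = \beta/a$ (where $B(p)=\beta(y(p))$ and $G_p = 1/a$), whereas you push $\beta$ forward without the extra factor $1/a$. With your $v = B$, the ODE transforms to
\[
a(av_p)_p + (\gamma'(-p)+\mu)v = 0,\quad\text{i.e.}\quad a^2 v_{pp} + \gamma(-p)v_p + (\gamma'(-p)+\mu)v = 0,
\]
whereas the target $(a^3v_p)_p = -\mu a v$ expands (using $aa_p=\gamma(-p)$) to $a^2 v_{pp} + 3\gamma(-p)v_p + \mu v = 0$. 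These differ by $2\gamma(-p)v_p - \gamma'(-p)v$, which is not identically zero, and no multiplication of the \emph{equation} by a power of $a$ can remove it — you must change the \emph{unknown}. The extra $\gamma(0)$ you noticed dangling in the top boundary condition is exactly the fingerprint of this: with the correct $v = \beta/a$, one has $a^3 v_p = a^2\tilde v_p - aa_p\tilde v$ (writing $\tilde v = av$), and the $-aa_p\tilde v$ term supplies precisely $-\gamma(0)\tilde v(0)$ at $p=0$, turning $a^3(0)v_p(0)=(-\sigma\mu+g)v(0)$ into $-\lambda d(\mu,\lambda)=0$. Your speculation that the stray $\gamma(0)$ gets ``absorbed through $p_0 = m(\lambda)$'' is incorrect and would not pan out on inspection.

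Concretely, the fix is: take $v := BG_p$. Then $v(p_0)=0$ since $\beta(-h)=0$, and using $G_{pp} = -\gamma(-p)G_p^3$ one computes $a^3 v_p = \beta_y/G_p - B\gamma(-p)$, whence $(a^3v_p)_p = \beta_{yy} + B\gamma'(-p) = -\mu B = -\mu av$ exactly (the $\gamma$-terms cancel against the ones from $G_{pp}$). Evaluating $a^3(0)v_p(0) - (-\sigma\mu+g)v(0)$ then yields $-\lambda\, d(\mu,\lambda)$ directly. The remaining structure of your argument (handling the Dirichlet-spectrum case separately via the $d=\infty$ convention, and noting that the hypothesis $\lambda^2>-2\min\Gamma$ keeps $a>0$ so the change of variables is a diffeomorphism) is aligned with the paper and fine, but the central change of unknown as you wrote it would not produce the stated Sturm--Liouville problem.
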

\begin{proof}
	Let us first assume that $\mu$ is not in the Dirichlet spectrum of $-\partial_y^2-\gamma'(\psi^\lambda)$ on $[-h,0]$. Let $v\coloneqq BG_p$, where $B(p)=\beta(y)$, $\beta=\beta^{\mu,\lambda}$ introduced in \eqref{eq:beta}, and note that $v\not\equiv 0$. We have $v(p_0)=0$ and (by abuse of notation)
	\begin{align*}
	v_p=B_pG_p+BG_{pp}=\beta_yG_p^2-B\gamma(-p)G_p^3.
	\end{align*}
	Thus,
	\begin{align*}
	(a^3v_p)_p&=\left(\frac{\beta_y}{G_p}-B\gamma(-p)\right)_p=\beta_{yy}-\frac{\beta_yG_{pp}}{G_p^2}-\beta_yG_p\gamma(-p)+B\gamma'(-p)\\
	&=-(\gamma'(-p)+\mu)B+\beta_y\gamma(-p)G_p-\beta_yG_p\gamma(-p)+B\gamma'(-p)=-\mu B\\
	&=-\mu av.
	\end{align*}
	Therefore, the solutions of \eqref{eq:SturmLiouville1}, \eqref{eq:SturmLiouville3} are exactly the multiples of $v=BG_p$. For this $v$, we further have
	\[a^3(0)v_p(0)-(-\sigma\mu+g)v(0)=-\lambda^3\left(\frac{\beta_y(0)}{\lambda^2}+\frac{\gamma(0)}{\lambda^3}\right)+\frac{-\sigma\mu+g}{\lambda}=-\lambda d(\mu,\lambda),\]
	from which the assertion follows immediately.
	
	If, on the other hand, $\mu$ is in the Dirichlet spectrum of $-\partial_y^2-\gamma'(\psi^\lambda)$ on $[-h,0]$, then the same computation as above shows that the solutions of \eqref{eq:SturmLiouville1}, \eqref{eq:SturmLiouville3} are exactly the multiples of $\tilde v=\tilde BG_p$, where $\tilde B(p)=\tilde\beta(y)$ and $\tilde\beta$ is determined by
	\[\tilde\beta_{yy}+(\gamma'(\psi^\lambda)+\mu)\tilde\beta=0\text{ on }[-h,0],\quad\tilde\beta(-h)=0,\quad\tilde\beta_y(-h)=1.\]
	Then, however, $\tilde v(0)=0$ as $\tilde\beta(0)=0$ by hypothesis and thus $\tilde v\equiv 0$ whenever \eqref{eq:SturmLiouville2} is satisfied.
\end{proof}
We moreover can prove the following.
\begin{proposition}
	Assume that there exists $0\le\delta<1$ and $C>0$ such that
	\[|\gamma(p)|\le C(|p|^\delta+1),\quad p>0,\]
	and let $\lambda<0$. Then we have:
	\begin{enumerate}[label=(\alph*)]
		\item It holds that $\lambda^2>-2\min_{[p_0,0]}\Gamma$ if $|\lambda|$ is sufficiently large.
		\item If $\lambda^2>-2\min_{[p_0,0]}\Gamma$, there are at most two $\mu<0$ such that \eqref{eq:SturmLiouville} has a nontrivial solution (or, equivalently, $d(\mu,\lambda)=0$).
		\item There is exactly one $\mu_0<0$ such that \eqref{eq:SturmLiouville} has a nontrivial solution (or, equivalently, $d(\mu_0,\lambda)=0$) if $|\lambda|$ is sufficiently large. For such $\lambda$, in particular, the dispersion relation $d(-(k\nu)^2,\lambda)=0$ has in $\N$ the unique solution $k_0$ if $k_0\in\N$ and $\nu>0$ are chosen such that $-(k_0\nu)^2=\mu_0$.
	\end{enumerate}
\end{proposition}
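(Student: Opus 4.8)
\emph{Overall approach.} I would do the three parts in order, since (a) supplies the a priori control that makes the change of variables behind \eqref{eq:SturmLiouville} legitimate, (b) is a short oscillation argument, and (c) is the substantive part, resting on (a) together with Propositions \ref{thm:branches} and \ref{thm:large lambda}.

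\emph{Part (a).} The Cauchy problem \eqref{eq:trivial_sol} has the conserved energy $\tfrac12(\psi^\lambda_y)^2+\int_0^{\psi^\lambda}\gamma=\tfrac{\lambda^2}{2}$, i.e.\ $(\psi^\lambda_y)^2=\lambda^2+2\Gamma(-\psi^\lambda)$ (using $\int_0^q\gamma(\tau)\,d\tau=-\Gamma(-q)$). Let $y_*\in[-h,0)$ be the infimum of those $y$ with $\psi^\lambda_y<0$ on $(y,0]$, well defined since $\psi^\lambda_y(0)=\lambda<0$. On $(y_*,0]$ the function $\psi^\lambda$ decreases from $0$, so $0\le\psi^\lambda\le hM$ there with $M:=\sup_{[y_*,0]}|\psi^\lambda_y|$, and the growth bound yields $|\Gamma(-\psi^\lambda)|\le C\bigl(\tfrac{(hM)^{\delta+1}}{\delta+1}+hM\bigr)$. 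Inserting this into the energy identity gives $M^2\le\lambda^2+2C\bigl(\tfrac{(hM)^{\delta+1}}{\delta+1}+hM\bigr)$, and since $\delta+1<2$ this forces $M\le\sqrt2\,|\lambda|$ once $|\lambda|$ is large; hence $\psi^\lambda=O(|\lambda|)$ and $\Gamma(-\psi^\lambda)=o(\lambda^2)$ on $[y_*,0]$, so $(\psi^\lambda_y)^2=\lambda^2+2\Gamma(-\psi^\lambda)>0$ there, which is incompatible with $\psi^\lambda_y(y_*)=0$ unless $y_*=-h$. Thus $\psi^\lambda$ is strictly decreasing on $[-h,0]$, $p_0=m(\lambda)<0$ with $|p_0|\le\sqrt2\,h|\lambda|$, and $|\Gamma(p)|\le C\bigl(\tfrac{|p|^{\delta+1}}{\delta+1}+|p|\bigr)=o(\lambda^2)$ on $[p_0,0]$, whence $\lambda^2>-2\min_{[p_0,0]}\Gamma$. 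The same bounds, together with $h=\int_{p_0}^0(\lambda^2+2\Gamma)^{-1/2}\,dp$, give $|p_0|=h|\lambda|(1+o(1))$ and $a=\sqrt{\lambda^2+2\Gamma}=|\lambda|(1+o(1))$ uniformly on $[p_0,0]$; I record these for use in (c).

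\emph{Part (b).} If $\mu<0$ and $d(\mu,\lambda)=0$, the preceding proposition provides a nontrivial $v$ with $(a^3v_p)_p=|\mu|\,a\,v$ and $v(p_0)=0$ (and $\mu$ is then not a Dirichlet eigenvalue, else \eqref{eq:SturmLiouville2} would force $v\equiv0$). Normalising $v_p(p_0)>0$, wherever $v>0$ we have $(a^3v_p)_p>0$, so $a^3v_p$ increases and stays positive and $v$ keeps increasing; hence $v>0$ on $(p_0,0]$. Writing $v=\beta^{\mu,\lambda}G_p$ with $G_p>0$ as in the preceding proposition, $\beta^{\mu,\lambda}$ also has no zero in $(p_0,0]$, so the eigenvalue $\mu$ lies on the initial branch $\mathcal B_0$ of the Prüfer graph; by Proposition \ref{thm:branches}, $\mathcal B_0$ carries at most two eigenvalues, so there are at most two such $\mu<0$.

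\emph{Part (c).} Assume $|\lambda|$ is large enough for (a). By the (isospectral) change of variables, the first Dirichlet eigenvalue $\mu_D^{(1)}$ of $-\partial_y^2-\gamma'(\psi^\lambda)$ on $[-h,0]$ equals $\min\bigl\{\int_{p_0}^0a^3v_p^2\,dp\,/\int_{p_0}^0av^2\,dp:\ v(p_0)=v(0)=0\bigr\}$, and $a=|\lambda|(1+o(1))$, $|p_0|=h|\lambda|(1+o(1))$ give $\mu_D^{(1)}=\tfrac{\pi^2}{h^2}(1+o(1))>0$; this is the crux, since $\gamma'(\psi^\lambda)$ has been absorbed into $a$. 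Next, at $\mu=0$ equation \eqref{eq:SturmLiouville1} reads $(a^3v_p)_p=0$, with solution $v(p)=\int_{p_0}^pa^{-3}\,ds$ vanishing at $p_0$; inserting into \eqref{eq:SturmLiouville2} shows $d(0,\lambda)$ has the sign of $1-g\int_{p_0}^0a^{-3}\,dp$, and since $\int_{p_0}^0a^{-3}\,dp=O(\lambda^{-2})\to0$ we get $d(0,\lambda)>0$. Now $d(\mu,\lambda)=\beta^{\mu,\lambda}_y(0)-A(\mu)$ with $A(\mu)=-\sigma\lambda^{-2}\mu-\lambda^{-1}\gamma(0)+\lambda^{-2}g$, and on the domain $(-\infty,\mu_D^{(1)})$ of $\mathcal B_0$ one has $d(\mu,\lambda)\to-\infty$ as $\mu\to-\infty$ (by \eqref{eq:bounds_beta}, $\beta^{\mu,\lambda}_y(0)\sim\sqrt{-\mu}$, while $-A(\mu)\to-\infty$ linearly), $d(0,\lambda)>0$, and $d(\mu,\lambda)\to-\infty$ as $\mu\uparrow\mu_D^{(1)}$ (since $\beta^{\mu,\lambda}_y(0)\to-\infty$). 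Hence $d(\cdot,\lambda)$ has at least one zero in $(-\infty,0)$ and at least one in $(0,\mu_D^{(1)})$, so $\mathcal B_0$ carries at least — hence, by Proposition \ref{thm:branches}, exactly — two eigenvalues, both simple, namely these two. Since by (b) every negative zero of $d(\cdot,\lambda)$ lies on $\mathcal B_0$, exactly one of them is negative; the assertion about $k_0$ then follows at once. The only genuine difficulty is in (c): recognising that the change of variables moves $\gamma'(\psi^\lambda)$ into the coefficient $a\approx|\lambda|$ — which is exactly where $\delta<1$ is used, so that $\Gamma=o(\lambda^2)$ and $a\approx|\lambda|$ — so that $\mu_D^{(1)}$ stays positive and bounded away from $0$ for $|\lambda|$ large irrespective of $\sup\gamma'$; the remaining estimates are routine $o(1)$ bookkeeping.
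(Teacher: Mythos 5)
Your proof is correct, but it takes a genuinely different route from the paper's for parts (b) and (c). For part (a), both arguments control $\Gamma$ by the sublinear growth of $\gamma$ and exploit $\delta+1<2$; you use the first integral $(\psi^\lambda_y)^2=\lambda^2+2\Gamma(-\psi^\lambda)$, while the paper runs a Gronwall estimate on $(\psi^\lambda)^2+(\psi^\lambda_y)^2$. These are essentially equivalent (your version has the small advantage of also yielding, from the same inequality, that $\psi^\lambda$ is strictly decreasing for $|\lambda|$ large, which the paper leaves as the standing hypothesis of the subsection). The real divergence is in (b) and (c): the paper casts \eqref{eq:SturmLiouville} as an eigenvalue problem $T\tilde u=\mu\tilde u$ for a self-adjoint operator in a $\pi_1$-Pontryagin space $L^2([p_0,0])\times\C$ and imports Lemmas 3.8--3.9 of \cite{Wahlen06b}, which say $T$ has at most two negative eigenvalues and exactly one once $\int_{p_0}^0 a^{-3}<1/g$; the large-$\lambda$ estimates then verify the latter. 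You instead argue entirely with oscillation theory: negative eigenvalues of \eqref{eq:SturmLiouville} force $v>0$ on $(p_0,0]$, hence $\beta^{\mu,\lambda}$ is zero-free and $\mu$ lies on $\mathcal B_0$, so Proposition \ref{thm:branches} gives at most two; for (c) you locate $\mu_D^{(1)}\approx\pi^2/h^2$ via a Rayleigh quotient in the $p$-variable, observe $d(0,\lambda)>0$ from the explicit solution at $\mu=0$ (interestingly, this is the very same quantity $1-g\int a^{-3}$ that appears in the paper's citation), note $d\to-\infty$ at both ends of $\mathcal B_0$, and conclude that $\mathcal B_0$ carries exactly two simple eigenvalues with precisely one of them negative. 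Your route avoids the indefinite inner-product machinery entirely and is closer in spirit to the Prüfer-angle analysis the paper already uses in Section \ref{sec:general_conclusions}; the paper's route is shorter at the cost of invoking an external and more specialised toolkit. Both proofs ultimately hinge on $\int_{p_0}^0 a^{-3}\,dp\to 0$, obtained identically from the $o(\lambda^2)$ control on $\Gamma$.
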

\begin{proof}
	In the following, $c>0$ denotes some generic constant which may depend on $\delta$, $C$, and $h$ and can change from line to line. Recalling \eqref{eq:trivial_sol}, we see that
	\begin{align*}
		\frac{d}{dy}\frac12\left((\psi^\lambda)^2+(\psi^\lambda_y)^2\right)&=\psi^\lambda\psi^\lambda_y-\psi^\lambda_y\gamma(\psi^\lambda)\le\psi^\lambda\psi^\lambda_y+C\left(|\psi^\lambda|^\delta+1\right)|\psi^\lambda_y|\\
		&\le c\left((\psi^\lambda)^2+(\psi^\lambda_y)^2+1\right).
	\end{align*}
	Therefore,
	\[(\psi^\lambda)^2+(\psi^\lambda_y)^2\le c(\lambda^2+1)\]
	by Gronwall's inequality and, recalling \eqref{eq:def_m},
	\begin{align}\label{eq:est_p0}
		|p_0|=|m(\lambda)|\le c\sqrt{\lambda^2+1}\le c(|\lambda|+1)
	\end{align}
	in particular. Thus, we have
	\begin{align}\label{eq:est_Gamma}
		\Gamma(p)\ge-\int_{p_0}^0C(|s|^\delta+1)\,ds\ge -C|p_0|(|p_0|^\delta+1)\ge -c\left(|\lambda|^{\delta+1}+1\right),\quad p_0\le p\le 0,
	\end{align}
	which proves (a) in view of $2>\delta+1$.
	
	As for (b) and (c), we follow the idea of \cite{Wahlen06b} (pointing out the fundamental difference to that paper since in our case $p_0$ is not fixed and depends on $\lambda$) and introduce the complex Pontryagin space $\mathbb H=L^2([p_0,0])\times\C$, which becomes, equipped with the inner product
	\[[\tilde u_1,\tilde u_2]=\langle au_1,u_2\rangle_{L^2([p_0,0])}-\sigma b_1\overline{b_2},\]
	a $\pi_1$-space; here and it what follows, we write $\tilde u=(u,b)\in\mathbb H$. After defining the linear operator $T$ via
	\[T\tilde u=\left(-a^{-1}(a^3u')',-a^3(0)\sigma^{-1}u'(0)+g\sigma^{-1}u(0)\right)\]
	with domain
	\[D(T)=\left\{(u,b)\in\mathbb H:u\in H^2((p_0,0)),u(p_0)=0,b=u(0)\right\},\]
	\eqref{eq:SturmLiouville} is cast into the eigenvalue problem $T\tilde u=\mu\tilde u$. In \cite[Lem. 3.8., Lem. 3.9.]{Wahlen06b} (and their proofs) it was shown that $T$ has at most two negative eigenvalues, which implies (b), and exactly one negative eigenvalue if
	\begin{align}\label{eq:Wahlen_cond_onedim}
		\int_{p_0}^0a^{-3}(p)\,dp<\frac1g.
	\end{align}
	Now
	\[a=\sqrt{\lambda^2+2\Gamma}\ge\sqrt{\lambda^2-c\left(|\lambda|^{\delta+1}+1\right)}\]
	by \eqref{eq:est_Gamma}, at least for $|\lambda|$ sufficiently large. Thus and by \eqref{eq:est_p0}, we have
	\[\int_{p_0}^0a^{-3}(p)\,dp\le c(|\lambda|+1)\left(\lambda^2-c\left(|\lambda|^{\delta+1}+1\right)\right)^{-3/2}\overset{|\lambda|\to\infty}{\longrightarrow}0.\]
	In particular, \eqref{eq:Wahlen_cond_onedim} is satisfied for sufficiently large $|\lambda|$, which completes the proof of (c).
\end{proof}

\subsection{Examples}\label{sec:ConditionsLocalBifurcation_examples}
We illustrate our results with certain examples.
\subsubsection{Constant vorticity}
First we suppose that $\gamma$ is a constant. Then,
\[\psi^\lambda(y)=-\frac\gamma2 y^2+\lambda y,\]
so $\psi_y^\lambda<0$ on $[-h,0]$ if and only if
\begin{align}\label{eq:const_vort_lambda_cond_mon}
	\lambda<\begin{cases}0,&\gamma\le 0,\\-\gamma h,&\gamma>0.\end{cases}
\end{align}
Clearly, $(k\nu)^2$ is not in the Dirichlet spectrum of $\partial_y^2$ on $[-h,0]$ for any $k\ge 0$, and it holds that
\[\beta^{-(k\nu)^2,\lambda}(y)=\frac{\sinh(k\nu(y+h))}{\sinh(k\nu h)}.\]
Thus, we have
\[d(-(k\nu)^2,\lambda)=k\nu\coth(k\nu h)-\sigma\lambda^{-2}(k\nu)^2+\lambda^{-1}\gamma-\lambda^{-2}g,\]
and $d(-(k\nu)^2,\lambda)=0$ can equivalently be written as	
\begin{align*}
	\lambda&=\frac{-\gamma\pm\sqrt{\gamma^2+4(\sigma l^2+g)l\coth(lh)}}{2l\coth(lh)}\\
	&=-\frac{\gamma\tanh(lh)}{2l}\pm\sqrt{\frac{\sigma l^2+g}{l}\tanh(lh)+\frac{\gamma^2}{4l^2}\tanh^2(lh)}\eqqcolon\lambda^\pm(l)
\end{align*}
with $l=k\nu$, a formula which can also be found in \cite{Martin13,Wahlen06b}. Notice that we always have $d_\lambda(-(k\nu)^2,\lambda)\neq 0$---also if \eqref{eq:const_vort_lambda_cond_mon} is violated---since the discriminant of $d(-(k\nu)^2,\lambda)=0$, viewed as a quadratic equation in $\lambda^{-1}$, is
\[\gamma^2+4(\sigma(k\nu)^2+g)k\coth(k\nu h)>0.\]
For a more detailed analysis of roots of the dispersion relation, we refer to \cite{Martin13}. Here, we like to mention the characterisation of the cases when the dispersion relation has only the solution $k=1$ at $\lambda=\lambda^\pm(\nu)$, giving rise to an exactly one-dimensional kernel (generated by a function with minimal period $L$) at $\lambda=\lambda^\pm(\nu)$: These are exactly the cases where
\[\frac{\sigma}{gh^2}\ge\frac{\gamma^2h}{6g}+\frac13\mp\frac{\gamma}{6g}\sqrt{\gamma^2h^2+4gh}\]
or
\[\frac{\sigma}{gh^2}<\frac{\gamma^2h}{6g}+\frac13\mp\frac{\gamma}{6g}\sqrt{\gamma^2h^2+4gh}\quad\text{and}\quad\lambda^\pm(\nu)\ne\lambda^\pm(k\nu)\text{ for all }k\ge 2.\]
(The statements for the \enquote{$+$} and \enquote{$-$} situation are to be read separately.)

\subsubsection{Affine vorticity}
If the vorticity is affine, that is, $\gamma(\psi)=a\psi+b$ with $a\ne 0$, then the trivial solutions are given by
\[\psi^\lambda(y)=\begin{cases}\frac{\lambda}{\sqrt{a}}\sin(\sqrt{a}y)+\frac{b}{a}(\cos(\sqrt{a}y)-1),&a>0,\\\frac{\lambda}{\sqrt{-a}}\sinh(\sqrt{-a}y)+\frac{b}{a}(\cosh(\sqrt{-a}y)-1),&a<0.\end{cases}\]
In the case $a<0$, it holds that $\psi_y^\lambda<0$ on $[-h,0]$ if and only if
\[\lambda<\begin{cases}0,&b\le 0,\\-\frac{b}{\sqrt{-a}}\tanh(\sqrt{-a}h),&b>0,\end{cases}\]
and in the case $a>0$ if and only if
\[\lambda<0\quad\text{and}\quad\sqrt{a}h<\cot^{-1}\left(-\frac{b}{\lambda\sqrt{a}}\right),\]
where $\cot^{-1}\colon\R\to(0,\pi)$. But in general, these trivial solutions can have arbitrarily many critical layers if $a>0$. A short computation shows that $(k\nu)^2$ is in the Dirichlet spectrum of $\partial_y^2+a$ on $[-h,0]$ if and only if
\begin{align}\label{eq:affine_vort_Dir_spec}
	\exists m\in\N:a-(k\nu)^2=\frac{\pi^2}{h^2}m^2.
\end{align}
To compute $\beta^{-(k\nu)^2,\lambda}$ and $d(-(k\nu)^2,\lambda)$, we have to distinguish three cases:
\begin{enumerate}[label=\arabic*),leftmargin=*]
	\item $a-(k\nu)^2>0$, which is obviously the only case where \eqref{eq:affine_vort_Dir_spec} can occur: We have
	\[\beta^{-(k\nu)^2,\lambda}(y)=\frac{\sin(\sqrt{a-(k\nu)^2}(y+h))}{\sin(\sqrt{a-(k\nu)^2}h)},\]
	provided \eqref{eq:affine_vort_Dir_spec} fails to hold, and thus
	\[d(-(k\nu)^2,\lambda)=\sqrt{a-(k\nu)^2}\cot(\sqrt{a-(k\nu)^2}h)-\sigma\lambda^{-2}(k\nu)^2+\lambda^{-1}b-\lambda^{-2}g.\]
	\item $a-(k\nu)^2=0$: Here we have
	\[\beta^{-(k\nu)^2,\lambda}(y)=\frac{y+h}{h}\]
	and thus
	\[d(-(k\nu)^2,\lambda)=h^{-1}-\sigma\lambda^{-2}(k\nu)^2+\lambda^{-1}b-\lambda^{-2}g.\]
	\item $a-(k\nu)^2<0$: Here we have
	\[\beta^{-(k\nu)^2,\lambda}(y)=\frac{\sinh(\sqrt{(k\nu)^2-a}(y+h))}{\sinh(\sqrt{(k\nu)^2-a}h)}\]
	and thus
	\begin{align}\label{eq:affine_disp_rel_3}
		d(-(k\nu)^2,\lambda)=\sqrt{(k\nu)^2-a}\coth(\sqrt{(k\nu)^2-a}h)-\sigma\lambda^{-2}(k\nu)^2+\lambda^{-1}b-\lambda^{-2}g.
	\end{align}
\end{enumerate}
In cases 2) and 3), it is furthermore easy to see that automatically $d_\lambda(-(k\nu)^2,\lambda)\ne 0$ provided $d(-(k\nu)^2,\lambda)=0$, since, as in the case of constant vorticity, the (of $\lambda$ independent) discriminant $b^2+4(\sigma(k\nu)^2+g)\beta_y^{-(k\nu)^2,\lambda}(0)$ is strictly positive.

\paragraph{The case $a<0$}
Let us now further investigate the case $a<0$, which has the advantage that we are in situation 3) for any $k$, and extend the ideas of \cite{Martin13} to affine vorticity. We can solve \eqref{eq:affine_disp_rel_3} for $\lambda$ to obtain
\begin{align}\label{eq:aff_vort_lambda(k)}
	\lambda=\lambda^\pm(l)\coloneqq\frac{-b\pm\sqrt{b^2+4(\sigma l^2+g)\sqrt{l^2-a}\coth(\sqrt{l^2-a}h)}}{2\sqrt{l^2-a}\coth(\sqrt{l^2-a}h)}
\end{align}
with $l=k\nu$. In the following, we treat the \enquote{$+$} and \enquote{$-$} situation simultaneously; everything is to be read separately for both cases. Viewing $\lambda^\pm$ as a (smooth) function on $\R$, it is clear that $\lambda^\pm\gtrless 0$ and $\lambda^\pm$ is even; in particular,
\begin{align}\label{eq:affine_lambda_der_0}
	\lambda^\pm_l(0)=\lambda^\pm_{lll}(0)=0.
\end{align} 
Now let us denote $f(l)\coloneqq\sqrt{l^2-a}\coth(\sqrt{l^2-a}h)$. Differentiating the identity
\[\lambda^\pm(l)^2d(-(k\nu)^2,\lambda^\pm(l))=0\]
with respect to $l$ yields
\begin{align}
	\lambda^\pm_l(2\lambda^\pm f+b)&=-(\lambda^\pm)^2f_l+2\sigma l,\nonumber\\
	\lambda^\pm_{ll}(2\lambda^\pm f+b)&=-4\lambda^\pm\lambda^\pm_lf_l-2(\lambda^\pm_l)^2f-(\lambda^\pm)^2f_{ll}+2\sigma.\label{eq:affine_lambda_kk}
\end{align}
Thus, if $\lambda^\pm_l=0$ at some $l>0$, then
\begin{align*}
	\lambda^\pm_{ll}(2\lambda^\pm f+b)=2\sigma-(\lambda^\pm)^2f_{ll}=2\sigma\frac{f_l-lf_{ll}}{f_l},
\end{align*}
noticing that $f_l>0$ for $l>0$. A direct computation shows that
\[f_l-lf_{ll}=(lh)^3\tilde g(\sqrt{l^2-a}h)\]
with
\[\tilde g(x)\coloneqq\frac{\tilde g_1(x)}{\tilde g_2(x)}\coloneqq\frac{\sinh^2x\cosh x+x\sinh x-2x^2\cosh x}{x^3\sinh^3x}.\]
We have $\tilde g_2>0$ on $(0,\infty)$, $\tilde g_1(0)=\tilde g_1'(0)=\tilde g_1''(0)=0$, and
\[\tilde g_1'''(x)=\sinh x(7\sinh^2x-2x^2)+9\sinh x(\cosh^2x-1)+11\cosh x(\sinh x\cosh x-x)>0\]
for $x>0$ because of $\sinh x>x$, $\cosh x>1$. Therefore, it holds that $\tilde g>0$ on $(0,\infty)$. Moreover, notice that
\begin{align}\label{eq:affine_bracket_sign}
	2\lambda^\pm f+b=\frac{\sigma l^2+g+(\lambda^\pm)^2f}{\lambda^\pm}\gtrless 0.
\end{align}
Thus, putting everything together, $\lambda^\pm_{ll}\gtrless 0$ provided $\lambda^\pm_l=0$. In particular, $\lambda^\pm$ can have at most one critical point on $(0,\infty)$, which, if it exists, has to be a local minimum (maximum). Since moreover obviously $\lambda^\pm$ tends to $\pm\infty$ as $l\to\infty$, we conclude that the monotonicity properties of $\lambda^\pm$ can be characterised by its behaviour near $0$.

Let us therefore take a look at $l=0$. By \eqref{eq:affine_lambda_der_0}, \eqref{eq:affine_lambda_kk}, and \eqref{eq:affine_bracket_sign} we see that $\lambda^\pm_{ll}(0)$ has the same sign as $\pm(2\sigma-(\lambda^\pm(0))^2f_{ll}(0))$, or vanishes if and only if $2\sigma=(\lambda^\pm(0))^2f_{ll}(0)$. Let us first consider further the latter case. Differentiating \eqref{eq:affine_lambda_kk} twice more, evaluating at $0$, and using \eqref{eq:affine_lambda_der_0} and $\lambda^\pm_{ll}(0)=0$ yields
\[\lambda^\pm_{llll}(0)(2\lambda^\pm(0)f(0)+b)=-\lambda^\pm(0)^2f_{llll}(0)=3\lambda^\pm(0)^2h^3\tilde g(\sqrt{-a}h)>0\]
after a direct computation. In particular, $\lambda^\pm_{llll}(0)\gtrless 0$. To summarise, we have therefore proved:

If $2\sigma\ge(\lambda^\pm(0))^2f_{ll}(0)$, then $\lambda^\pm$ is strictly increasing (decreasing) on $[0,\infty)$. In particular, for every $k_0\in\N$, $F_{(w,\phi)}(\lambda^\pm(k_0\nu),0,0)$ has a one-dimensional kernel and thus the assumptions of Theorems \ref{thm:LocalBifurcation} and \ref{thm:GlobalBifurcation} are satisfied for $(k_0,\lambda)=(k_0,\lambda^\pm(k_0\nu))$.

If $2\sigma<(\lambda^\pm(0))^2f_{ll}(0)$, then there exists $l^\pm>0$ such that $\lambda^\pm$ is strictly decreasing (increasing) on $[0,l^\pm)$ and strictly increasing (decreasing) on $(l^\pm,\infty)$. In particular, such a conclusion as above cannot always be made. However, denoting $l^\pm_1$ the unique value of $l>0$ satisfying $\lambda^\pm(l)=\lambda^\pm(0)$ and recalling that $\lambda^\pm\to\pm\infty$ as $l\to\infty$, this conclusion can be made provided:
\begin{itemize}
	\item $k_0\nu\ge l^\pm_1$ (in particular, for sufficiently small wavelengths), or
	\item $k_0\nu<l^\pm_1$ and $k_0^\pm\notin\N$, where $k_0^\pm$ is the unique number $k>0$, $k\ne k_0$ such that $\lambda^\pm(k\nu)=\lambda^\pm(k_0\nu)$.
\end{itemize}
If, however, $k_0^\pm\in\N$ in the latter case, the corresponding kernel is two-dimensional, and one can follow the procedure described in Remark \ref{rem:two_sol} for $k_0$ and $k_0^\pm$.

Finally, we mention that $(\lambda^\pm(0))^2f_{ll}(0)$ depends only on $a$, $b$, $g$, and $h$, so that the above cases can be viewed as \enquote{coefficient of surface tension larger/smaller than a certain threshold}. Moreover, it is not difficult to see that
\[\lim_{a\to 0}(\lambda^\pm(0))^2f_{ll}(0)=\frac{b^2h^3}{3}+\frac{2gh^2}{3}\mp\frac{bh^2}{3}\sqrt{b^2h^2+4gh},\]
that is, the threshold in the case $a=0$ is recovered in the limit $a\to 0$, as one would expect.

\paragraph{The case $a>0$}
The case $a>0$ is more involved since $d(\cdot,\lambda)$ has poles at the values of $\mu$ for which \eqref{eq:affine_vort_Dir_spec} holds with $-(k\nu)^2$ replaced by $\mu$. In order to assure \eqref{ass:SL-spectrum}, we assume that
\begin{align}\label{eq:affine_a>0_ass}
	a\notin\left\{\frac{\pi^2}{h^2}m^2:m\in\N\right\}
\end{align}
in view of \eqref{eq:affine_vort_Dir_spec}. Without the presence of surface tension, a detailed analysis of the dispersion relation and the possibility of multi-dimensional kernels can be found in \cite{AasVar18,EhrnEschWahl11,EhrnWahl15}. However, taking surface tension into account, the situation becomes much more difficult due to the competing monotonicity behaviour of $\beta_y^{-(k\nu)^2,\lambda}(0)$, which is strictly increasing in $k$ (where it is defined), and $-\sigma\lambda^{-2}(k\nu)^2$, which is strictly decreasing in $k$, and, moreover, due to the fact that $d(-(k\nu)^2,\lambda)$ cannot be written as a function of only $k$ plus a function of only $\lambda$. In \cite[Lem. 3.6]{AasVar18} it was proved that the limit points of the set of all $a>0$ for which there exists $\lambda\in\R$ such that the corresponding kernel is at least two-dimensional are contained in $\{n^2+\frac{\pi^2}{h^2}m^2:n\in\N_0,m\in\N\}$.
A similar, yet much weaker result (the possible exceptional \enquote{small} set may now depend on $\lambda$) can also be proved in the presence of surface tension. In order to clarify the dependence of $d$ on $a$, we will write $d(-(k\nu)^2,\lambda,a)$ instead of $d(-(k\nu)^2,\lambda)$ in the following lemma.
\begin{lemma}\label{lma:affine_a>0}
	Let $\lambda\ne 0$ and denote
	\[J_\lambda\coloneqq\{a>0:\exists k_1,k_2\in\N,k_1\ne k_2\mathrm{\ with\ }d(-(k_1\nu)^2,\lambda,a)=d(-(k_2\nu)^2,\lambda,a)=0\}.\]
	Then the limit points of $J_\lambda$ are contained in
	\[A\coloneqq\left\{n^2+\frac{\pi^2}{h^2}m^2:n\in\N,m\in\N\right\}.\]
	In particular, $J_\lambda$ consists of isolated points, except possibly those that lie in $A$, and has countable closure.
\end{lemma}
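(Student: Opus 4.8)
The plan is to display, for each fixed $k\in\N$, the map $a\mapsto d(-(k\nu)^2,\lambda,a)$ as a meromorphic function of $a\in\C$, and then to show that $J_\lambda$ is in fact locally finite, which is stronger than and in particular implies the assertion. First I would recall that for affine vorticity $\gamma(\psi)=a\psi+b$ one has $\gamma'\equiv a$, so that, by \eqref{eq:bounds_beta} (or directly from the explicit case distinction preceding the lemma), $\beta_y^{-(k\nu)^2,\lambda}(0)=v(a-(k\nu)^2)$, where $v(z)=\sqrt{-z}\coth(h\sqrt{-z})$ extends to a meromorphic function on $\C$ with simple poles exactly at $z=m^2\pi^2/h^2$, $m\in\N$, which is real-analytic and strictly decreasing on $(-\infty,\pi^2/h^2)$, with $v(0)=1/h$, and satisfies $v(z)=O(\sqrt{-z})$ as $z\to-\infty$ (indeed $\sqrt{-z}\le v(z)\le\sqrt{-z}\coth(1)$ whenever $-z\ge 1/h^2$, since $\coth\ge1$ is decreasing). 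Consequently
\[d(-(k\nu)^2,\lambda,a)=v(a-(k\nu)^2)-\sigma\lambda^{-2}(k\nu)^2+\lambda^{-1}b-\lambda^{-2}g\]
is meromorphic in $a$, with poles precisely at $a\in(k\nu)^2+\{m^2\pi^2/h^2:m\in\N\}$, that is, at exactly the values where $-(k\nu)^2$ lies in the Dirichlet spectrum and $d$ was declared equal to $\infty$.

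Next, for a fixed pair $k_1\ne k_2$ I would set $G_{k_1,k_2}:=d(-(k_1\nu)^2,\lambda,\cdot)-d(-(k_2\nu)^2,\lambda,\cdot)$; the $\lambda^{-1}b$ and $\lambda^{-2}g$ terms cancel, and since $v(a-(k_1\nu)^2)-v(a-(k_2\nu)^2)\to0$ as $a\to-\infty$ (both terms being $\sqrt{-a}+o(1)$), one obtains $G_{k_1,k_2}(a)\to\sigma\lambda^{-2}((k_2\nu)^2-(k_1\nu)^2)\ne0$. Hence $G_{k_1,k_2}$ is meromorphic on $\C$ and not identically zero, so its zero set is discrete in $\C$ and, in particular, finite in each bounded subset of $\R$. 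Since $J_\lambda^{k_1,k_2}:=\{a>0:d(-(k_1\nu)^2,\lambda,a)=d(-(k_2\nu)^2,\lambda,a)=0\}$ is contained in this zero set, it is discrete, and therefore $J_\lambda=\bigcup_{k_1\ne k_2}J_\lambda^{k_1,k_2}$ is countable.

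The remaining and crucial step is an a priori bound on wavenumbers: for every $M>0$ there is $K_0=K_0(\lambda,M)$ such that $d(-(k\nu)^2,\lambda,a)=0$ with $a\in[0,M]$ forces $k\le K_0$. Indeed, once $(k\nu)^2\ge M+1/h^2$ and $a\in[0,M]$ we have $a-(k\nu)^2<0$ and $v(a-(k\nu)^2)\le k\nu\coth(1)$ uniformly in $a$, whereas $d(-(k\nu)^2,\lambda,a)=0$ would require $v(a-(k\nu)^2)$ to equal $\sigma\lambda^{-2}(k\nu)^2-\lambda^{-1}b+\lambda^{-2}g$, which grows quadratically in $k$; so $d(-(k\nu)^2,\lambda,a)<0$ for all sufficiently large $k$, uniformly in $a\in[0,M]$. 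Granting this, every $a\in J_\lambda\cap[0,M]$ lies in some $J_\lambda^{k_1,k_2}$ with $k_1,k_2\le K_0(\lambda,M)$; there are finitely many such pairs, each contributing only finitely many points of $[0,M]$ by the previous step, so $J_\lambda\cap[0,M]$ is finite. As $M$ was arbitrary, $J_\lambda$ is a discrete subset of $(0,\infty)$ without finite accumulation points, which establishes all assertions of the lemma — in fact with the exceptional set empty: the set of limit points of $J_\lambda$ is empty, hence trivially contained in $A$; $J_\lambda$ consists of isolated points; and $\overline{J_\lambda}=J_\lambda$ is countable.

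I expect the main obstacle to be the uniform-in-$a$ growth comparison $v(a-(k\nu)^2)=O(k)$ versus $\sigma\lambda^{-2}(k\nu)^2=\Theta(k^2)$ in the last step (the rest being soft complex analysis), together with verifying carefully that the reduction to the single meromorphic function $v$ is legitimate across the three regimes $a\lessgtr(k\nu)^2$, in particular the analyticity and the value $v(0)=1/h$ at the transition $a=(k\nu)^2$.
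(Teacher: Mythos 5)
Your proof is correct and in fact yields a strictly stronger conclusion than the lemma: $J_\lambda$ is locally finite in $(0,\infty)$, hence has no limit points at all, so the exceptional set $A$ is not needed. The paper takes a different route: it assumes by contradiction that some $a\notin A$ is a limit point of $J_\lambda$, extracts along a sequence $a_i\to a$ pairs $k_{1,i}<k_{2,i}$ solving the dispersion relation, bounds the wavenumbers using strict monotonicity of $f_\lambda(x)=x\coth(hx)-\sigma\lambda^{-2}x^2$ for large $x$, passes to a subsequence with fixed $k_1<k_2$, and then invokes real-analyticity of $l_\lambda(k_1,\cdot)-l_\lambda(k_2,\cdot)$ in a real neighbourhood of $a$ (precisely where $a\notin A$ enters) to reach a contradiction via the identity theorem. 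You replace that local step by the global observation that $v(z)=\sqrt{-z}\coth(h\sqrt{-z})$ is a single-valued meromorphic function on all of $\C$ (being even in $\sqrt{-z}$), so that for each fixed pair the difference $G_{k_1,k_2}$ is meromorphic on $\C$ and, by the limit $\sigma\lambda^{-2}((k_2\nu)^2-(k_1\nu)^2)\ne0$ as $a\to-\infty$, not identically zero; its zero set therefore accumulates nowhere in $\C$, in particular not at poles, so no exclusion of $a\in A$ is required. Your uniform-in-$a$ a priori wavenumber bound (the growth comparison $v=O(k)$ against $\sigma\lambda^{-2}(k\nu)^2=\Theta(k^2)$) plays the same role as the paper's monotonicity of $f_\lambda$, but being uniform over compact $a$-intervals it upgrades the result from ``limit points lie in a countable set'' to genuine local finiteness. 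In short: same analytic flavour, but your use of the global meromorphic continuation of $v$ is a cleaner and stronger version of the paper's local real-analyticity argument.
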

\begin{proof}
	Notice that $a\notin J_\lambda$ if and only if $d(-(k\nu)^2,\lambda)$ is finite for all $k\in\N$. Now suppose that the assertion is false, that is, there exists $a\notin A$ and sequences $(a_i)\subset A$ with $a_i\neq a$ for all $i\in\N$ and $(k_{1,i}),(k_{2,i})\subset\N$ with $k_{1,i}<k_{2,i}$ for all $i\in\N$ such that $d(-(k_{1,i}\nu)^2,\lambda,a_i)=d(-(k_{2,i}\nu)^2,\lambda,a_i)$ for all $i\in\N$ and $a_i\to a$ as $i\to\infty$. Thus, in particular, $l_\lambda(k_{1,i},a_i)=l_\lambda(k_{2,i},a_i)$ where
	\begin{align*}
		l_\lambda(k,a)&\coloneqq\beta^{-(k\nu)^2,\lambda}_y(0)-\sigma\lambda^{-2}(k\nu)^2\\
		&=\begin{cases}\sqrt{a-(k\nu)^2}\cot(\sqrt{a-(k\nu)^2}h)-\sigma\lambda^{-2}(k\nu)^2\,&a>(k\nu)^2,\\h^{-1}-\sigma\lambda^{-2}(k\nu)^2,&a=(k\nu)^2,\\\sqrt{(k\nu)^2-a}\coth(\sqrt{(k\nu)^2-a}h)-\sigma\lambda^{-2}(k\nu)^2,&a<(k\nu)^2.\end{cases}
	\end{align*}
	Let us now denote $f_\lambda(x)\coloneqq x\coth(hx)-\sigma\lambda^{-2}x^2$, $x>0$. Then for large values of $x$, say, $x\ge r_\lambda$, the function $f_\lambda$ is strictly decreasing since $x\coth(hx)$ is exponentially close to $x$. Therefore, necessarily $\sqrt{(k_{1,i}\nu)^2-a_i}\le r_\lambda$ if $(k_{1,i}\nu)^2>a_i$. In particular, $(k_{1,i})$ is bounded. Passing to a suitable subsequence, we can thus assume that $k_{1,i}=k_1$ is constant. Since $f_\lambda(x)\to-\infty$ as $x\to\infty$ and $l_\lambda(k_{2,i},a_i)=l_\lambda(k_1,a_i)$, we conclude that also $(k_{2,i})$ is bounded. Passing to another suitable subsequence, we find that $l_\lambda(k_1,a_i)=l_\lambda(k_2,a_i)$ for some $k_1<k_2$. Since $a\notin A$, the function $l_\lambda(k_1,\cdot)-l_\lambda(k_2,\cdot)$ is finite and analytic in a neighbourhood of $a$, with $a$ being a limit point of its zero set. Thus, this function has to vanish in a neighbourhood of $a$, which obviously does not hold true.
\end{proof}
Let us now consider the transversality condition: At a solution of $d(-(k\nu)^2,\lambda)=0$, it is clear that the transversality condition is satisfied if and only if the discriminant $b^2+4(\sigma(k\nu)^2+g)\beta_y^{-(k\nu)^2,\lambda}(0)$ does not vanish, and this can only happen if $(k\nu)^2<a$, as was already observed earlier. On the other hand, we can also write the transversality condition as
\[d_\lambda(-(k\nu)^2,\lambda)=2\sigma\lambda^{-3}(k\nu)^2-\lambda^{-2}b+2\lambda^{-3}g\ne 0\quad\Leftrightarrow\quad 2(\sigma(k\nu)^2+g)\ne\lambda b.\]

We can now combine our results in the case $a>0$ as follows.
\begin{proposition}
	Let $\lambda\ne 0$ and suppose that the hypotheses of Theorems \ref{thm:LocalBifurcation} and \ref{thm:GlobalBifurcation}, for this fixed $\lambda$, are violated. Then one of the following alternatives occurs:
	\begin{enumerate}[label=(\roman*)]
		\item the dispersion relation $d(-(k\nu)^2,\lambda)=0$ has no solution $k\in\N$ at all;
		\item $a\in N_\lambda$, with $N_\lambda$ being defined as the union of $J_\lambda$ and the set appearing in \eqref{eq:affine_a>0_ass}; moreover, all the conclusions made on $J_\lambda$ in Lemma \ref{lma:affine_a>0} obviously also hold for $N_\lambda$;
		\item there exists exactly one $k\in\N$ such that $d(-(k\nu)^2,\lambda)=0$, and, moreover, $(k\nu)^2<a$ and
		\begin{align*}
			b^2+4(\sigma(k\nu)^2+g)\sqrt{a-(k\nu)^2}\cot(\sqrt{a-(k\nu)^2}h)&=0\quad\mathrm{or,\ equivalently,}\\
			2(\sigma(k\nu)^2+g)&=\lambda b.
		\end{align*}
	\end{enumerate}
\end{proposition}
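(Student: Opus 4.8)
The plan is to argue by a case distinction according to which of the hypotheses of Theorems \ref{thm:LocalBifurcation} and \ref{thm:GlobalBifurcation} fails. Since the remaining requirements of the global bifurcation theorem are of a different, geometric/technical nature and do not concern the linearisation, it suffices to track the three linearisation-level conditions entering Theorem \ref{thm:LocalBifurcation}: assumption \eqref{ass:SL-spectrum}; that $d(-(k\nu)^2,\lambda)=0$ has exactly one solution $k\in\N$; and the transversality condition $d_\lambda(-(k\nu)^2,\lambda)\ne0$ at that solution. Throughout I would use that, for affine $\gamma$, one has $\gamma'(\psi^\lambda)\equiv a$, so $\beta^{-(k\nu)^2,\lambda}$, and in particular $\beta^{-(k\nu)^2,\lambda}_y(0)$, is independent of $\lambda$ and is given explicitly by the three formulae 1)--3) above.

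If \eqref{ass:SL-spectrum} fails, then, because $\gamma'(\psi^\lambda)\equiv a$, the number $-a$ is a Dirichlet eigenvalue of $-\partial_y^2$ on $[-h,0]$, that is $a=\pi^2m^2/h^2$ for some $m\in\N$; this is exactly the set excluded in \eqref{eq:affine_a>0_ass}, so $a\in N_\lambda$ and we are in alternative (ii) (the conclusions of Lemma \ref{lma:affine_a>0} carry over to $N_\lambda$ since the set adjoined to $J_\lambda$ is discrete and unbounded, hence contributes only isolated points and keeps the closure countable). If \eqref{ass:SL-spectrum} holds but the dispersion relation does not have exactly one solution in $\N$, then either it has none — alternative (i) — or it has two distinct solutions $k_1\ne k_2$, for which the finite values $d(-(k_1\nu)^2,\lambda,a)$ and $d(-(k_2\nu)^2,\lambda,a)$ both vanish and therefore coincide, so $a\in J_\lambda\subset N_\lambda$ — again alternative (ii).

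It remains to treat the case where \eqref{ass:SL-spectrum} holds and there is exactly one $k\in\N$ with $d(-(k\nu)^2,\lambda)=0$, but $d_\lambda(-(k\nu)^2,\lambda)=0$. By the computation preceding the statement, this is equivalent to $2(\sigma(k\nu)^2+g)=\lambda b$, which is one of the two asserted identities. For the other, discriminant, form I would observe that, multiplying $d(-(k\nu)^2,\lambda)=0$ by $\lambda^2$ and abbreviating $\beta_y\coloneqq\beta^{-(k\nu)^2,\lambda}_y(0)$ (which does not depend on $\lambda$), the equation becomes the quadratic $\beta_y\lambda^2+b\lambda-(\sigma(k\nu)^2+g)=0$ in $\lambda$; since $d_\lambda$ vanishes at a root of $d$ precisely when that root is a double root, failure of transversality is equivalent to the discriminant $b^2+4(\sigma(k\nu)^2+g)\beta_y$ being zero. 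As $\sigma(k\nu)^2+g>0$ and $b^2\ge0$, this forces $\beta_y<0$, which by inspection of 1)--3) can only occur in case 1), i.e.\ only when $(k\nu)^2<a$, and then $\beta_y=\sqrt{a-(k\nu)^2}\cot(\sqrt{a-(k\nu)^2}h)$; substituting yields the first displayed identity of (iii), while writing the double root as $\lambda=-b/(2\beta_y)$ recovers its equivalence with $2(\sigma(k\nu)^2+g)=\lambda b$.

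I do not anticipate a genuine obstacle. The points deserving care are: checking that the global bifurcation theorem adds no further linearisation-level hypothesis, so that the trichotomy above is exhaustive; the $\lambda$-independence of $\beta^{-(k\nu)^2,\lambda}_y(0)$ for affine $\gamma$, which is what makes the "double root" reformulation of the transversality condition available; and the sign argument that singles out case 1) in alternative (iii).
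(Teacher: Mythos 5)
Your proof is correct and follows essentially the same approach as the paper: the paper's own justification is the discussion that precedes the proposition, where \eqref{ass:SL-spectrum} is identified with $a\notin\{\pi^2 m^2/h^2:m\in\N\}$ via \eqref{eq:affine_vort_Dir_spec}, $J_\lambda$ captures multi-dimensional kernels, and the transversality condition is identified with non-vanishing of the discriminant $b^2+4(\sigma(k\nu)^2+g)\beta_y^{-(k\nu)^2,\lambda}(0)$, which is automatically positive in cases 2) and 3) so failure forces $(k\nu)^2<a$. Your explicit "double-root" reformulation (using the $\lambda$-independence of $\beta_y^{-(k\nu)^2,\lambda}(0)$ to view $\lambda^2 d(-(k\nu)^2,\lambda)=0$ as a quadratic in $\lambda$) makes rigorous the paper's "as in the case of constant vorticity" remark, and your observation that the hypotheses of Theorems \ref{thm:LocalBifurcation} and \ref{thm:GlobalBifurcation} are identical correctly ensures the trichotomy is exhaustive.
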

\begin{remark}
	\begin{itemize}
		\item It is, for example, clear that alternative (iii) cannot occur in the case of a purely linear vorticity function, that is, $b=0$.
		\item A concrete example in the case $a>0$ for which all hypotheses of Theorems \ref{thm:LocalBifurcation} and \ref{thm:GlobalBifurcation} are satisfied is the following: Let $0<a<\nu$ such that $a\ne\pi^2m^2/h^2$ for all $m\in\N$. Thus, \eqref{eq:affine_vort_Dir_spec} does not hold for any $k\in\N_0$ and only case 3) occurs for $k\in\N$. Moreover, the dispersion relation can be solved as in \eqref{eq:aff_vort_lambda(k)} before. Since $\lambda^\pm$, now viewed as a function of $l\in[\nu,\infty)$, converges to $\pm\infty$ and is strictly monotone for large $k$, we can choose a sufficiently large $k_0\in\N$ together with $\lambda=\lambda^+(k_0\nu)$ or $\lambda=\lambda^-(k_0\nu)$ to ensure that the corresponding kernel is exactly one-dimensional and the transversality condition holds.
	\end{itemize}
\end{remark}

\section{Global bifurcation}\label{sec:GlobalBifurcation}
The theory for local bifurcation having set up, we now turn to global bifurcation, which is of course the main motivation of our formulation \enquote{identity $+$ compact}. To this end, we first state the global bifurcation theorem by Rabinowitz.
\begin{theorem}\label{thm:Rabinowitz}
	Let $X$ be a Banach space, $U\subset\R\times X$ open, and $F\in C(U;X)$. Assume that $F$ admits the form $F(\lambda,x)=x+f(\lambda,x)$ with $f$ compact, and that $F_x(\cdot,0)\in C(\R;L(X,X))$. Moreover, suppose that $F(\lambda_0,0)=0$ and that $F_x(\lambda,0)$ has an odd crossing number at $\lambda=\lambda_0$. Let $S$ denote the closure of the set of nontrivial solutions of $F(\lambda,x)=0$ in $\R\times X$ and $\mathcal C$ denote the connected component of $S$ to which $(\lambda_0,0)$ belongs. Then one of the following alternatives occurs:
	\begin{enumerate}[label=(\roman*)]
		\item $\mathcal C$ is unbounded;
		\item $\mathcal C$ contains a point $(\lambda_1,0)$ with $\lambda_1\neq\lambda_0$;
		\item $\mathcal C$ contains a point on the boundary of $U$.
	\end{enumerate}
\end{theorem}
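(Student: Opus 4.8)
The plan is to prove this by contradiction, using Leray--Schauder degree theory together with a topological separation argument, in the spirit of Rabinowitz' original proof \cite{Rabinowitz71} (see also \cite{Kielhoefer}). So suppose that none of the alternatives (i)--(iii) holds. Since $f$ is compact and every solution satisfies $x=-f(\lambda,x)$, the set of solutions of $F=0$ lying in any fixed bounded subset of $\R\times X$ is compact: a bounded sequence of solutions has a subsequence along which $f$, hence $x$, converges, while the $\lambda$-components stay in a compact interval. As alternative (i) fails, $\mathcal C$ is bounded, hence a closed subset of such a compact set and therefore compact; as (iii) fails, $\mathcal C\subset U$ with positive distance to $\partial U$; and as (ii) fails, $\mathcal C\cap(\R\times\{0\})=\{(\lambda_0,0)\}$.

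The core of the argument is to surround $\mathcal C$ by a bounded open set with a controlled boundary. Fixing a ball $B_R$ in $\R\times X$ so large that $\mathcal C$ lies in its interior, the set $K:=S\cap\overline{B_R}$ is a compact metric space and $\mathcal C$ is exactly the connected component of $K$ through $(\lambda_0,0)$, since any connected subset of $K$ meeting $\mathcal C$ lies in $S$, hence in $\mathcal C$ by maximality. By Whyburn's lemma $\mathcal C$ is the intersection of the relatively clopen subsets of $K$ that contain it, so for any prescribed neighbourhood of $\mathcal C$ there is a relatively clopen $\mathcal K\subset K$ with $\mathcal C\subset\mathcal K$ contained in that neighbourhood. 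Choosing $\delta>0$ so small that $F_x(\lambda,0)$ is invertible for $0<|\lambda-\lambda_0|\le\delta$ (possible because the crossing number of $F_x(\cdot,0)$ is defined at $\lambda_0$) and thickening $\mathcal K$ by a sufficiently small uniform amount, I would obtain a bounded open set $\mathcal N\subset\R\times X$ with $\mathcal C\subset\mathcal N$, $\overline{\mathcal N}\subset U$, $\partial\mathcal N\cap S=\emptyset$, and $\overline{\mathcal N}\cap(\R\times\{0\})\subset(\lambda_0-\delta,\lambda_0+\delta)\times\{0\}$; a further modification near the trivial line, exploiting that $F_x(\lambda,0)$ is invertible for $0<|\lambda-\lambda_0|\le\delta$, lets one additionally arrange that $F(\lambda,\cdot)$ has no zero on $\partial\mathcal N_\lambda$ for \emph{every} $\lambda\in\R$, where $\mathcal N_\lambda:=\{x\in X:(\lambda,x)\in\mathcal N\}$, while $\mathcal N$ still contains a full neighbourhood of $[\lambda_0-\delta',\lambda_0+\delta']\times\{0\}$ for some $\delta'\in(0,\delta)$.

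With such an $\mathcal N$ the proof closes by a degree count. For each $\lambda$ the map $F(\lambda,\cdot)$ is a compact perturbation of the identity on $\overline{\mathcal N_\lambda}$ with no zero on $\partial\mathcal N_\lambda$, so $d(\lambda):=\deg(F(\lambda,\cdot),\mathcal N_\lambda,0)$ is well defined; it vanishes for $|\lambda|$ large since then $\mathcal N_\lambda=\emptyset$, and by the homotopy invariance of the Leray--Schauder degree along the varying family of domains $\mathcal N_\lambda$ it does not depend on $\lambda$, so $d\equiv0$. On the other hand, for $\lambda=\lambda_0\pm\delta'$ the only trivial solution in $\overline{\mathcal N_\lambda}$ is $0$ and $F_x(\lambda,0)$ is invertible; excising a small ball about $0$ and homotoping $F(\lambda,\cdot)$ to its linearisation shows that $d(\lambda)$ is the Leray--Schauder index $\operatorname{ind}F_x(\lambda,0)$ plus a contribution coming from the nontrivial solutions, which all lie in $\mathcal K\subset\mathcal N$. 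Since $\mathcal K$ is interior to $\mathcal N$, this nontrivial contribution does not change as $\lambda$ runs from $\lambda_0-\delta'$ to $\lambda_0+\delta'$, and subtracting the two decompositions gives $0=d(\lambda_0+\delta')-d(\lambda_0-\delta')=\operatorname{ind}F_x(\lambda_0+\delta',0)-\operatorname{ind}F_x(\lambda_0-\delta',0)$, which is nonzero precisely because the crossing number of $F_x(\cdot,0)$ at $\lambda_0$ is odd. This contradiction establishes the dichotomy.

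The step I expect to be the main obstacle is the construction of the isolating neighbourhood $\mathcal N$ and the associated bookkeeping: combining Whyburn's separation lemma with the requirements that $\partial\mathcal N$ carry no nontrivial solutions \emph{and} that $\mathcal N$ meet the trivial line only in a controlled segment about $\lambda_0$ --- so that the globally vanishing slice-degree can be played off against the odd local index jump --- is delicate, and is exactly where the compactness of $f$, the failure of all three alternatives, and the invertibility of $F_x(\lambda,0)$ near $\lambda_0$ all have to be used simultaneously.
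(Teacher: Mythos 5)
The paper does not supply its own proof of this theorem: it cites Kielh\"ofer \cite[Thm.~II.3.3]{Kielhoefer} for the case $U=X$ and remarks that the argument for a general open $U$ is practically identical, the new alternative (iii) accounting for $\mathcal C$ possibly approaching $\partial U$. Your sketch reproduces, in outline, exactly that standard Rabinowitz--Kielh\"ofer degree-theoretic argument---Whyburn separation to obtain a relatively clopen $\mathcal K$, thickening to an isolating bounded open set $\mathcal N$, generalised homotopy invariance giving a constant slice degree $d(\lambda)\equiv 0$, and a local additivity decomposition near $\lambda_0$---so you are walking the same route that the paper points to.

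However, the final bookkeeping as you wrote it has a genuine gap. The assertion that ``the nontrivial contribution does not change as $\lambda$ runs from $\lambda_0-\delta'$ to $\lambda_0+\delta'$'' because ``$\mathcal K$ is interior to $\mathcal N$'' is not justified: the degree of $F(\lambda,\cdot)$ over $\mathcal N_\lambda\setminus\overline{B_\rho}$ is homotopy invariant in $\lambda$ only if $F(\lambda,\cdot)$ has no zero on $\partial B_\rho$ throughout $[\lambda_0-\delta',\lambda_0+\delta']$, and this typically fails precisely because the bifurcating continuum $\mathcal C$ emanates from $(\lambda_0,0)$ and must cross the sphere $\partial B_\rho$ at some $\lambda$ in that interval (or another component of $S\cap\overline{B_R}$ accumulating at $(\lambda_0,0)$ may do so). That $\mathcal K$ lies in the interior of $\mathcal N$ controls what happens on $\partial\mathcal N$, not on the excised sphere $\partial B_\rho$, which is itself interior to $\mathcal N$. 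This is exactly the delicate point that the standard proofs handle with care, e.g.\ by arranging the isolating neighbourhood so that near $|\lambda-\lambda_0|=\delta'$ its slices reduce to a ball carrying only the trivial solution, and by performing the additivity and excision in the two half-slabs $\lambda\gtrless\lambda_0$ separately before comparing. You flagged the construction of $\mathcal N$ as the main obstacle---correctly---but the gap I point to persists even after $\mathcal N$ is built, in the step that converts ``$d\equiv 0$'' and ``the local index flips'' into a contradiction.
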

The proof of this theorem in the case $U=X$ can be found in \cite[Thm. II.3.3]{Kielhoefer} and is practically identical to the proof for general $U$.

Now we can prove the following result.

\begin{theorem}\label{thm:GlobalBifurcation}
	Assume that there exists $\lambda_0\neq 0$ such that \eqref{ass:SL-spectrum} holds for $\lambda=\lambda_0$ and such that the dispersion relation
	\[d(-(k\nu)^2,\lambda_0)=0,\]
	with $d$ given by \eqref{eq:d(k,lambda)}, has exactly one solution $k_0\in\N$, and assume that the transversality condition
	\[d_\lambda(-(k_0\nu)^2,\lambda_0)\neq 0\]
	holds. Let $S$ denote the closure of the set of nontrivial solutions of $F(\lambda,w,\phi)=0$ in $\R\times X$ satisfying \eqref{eq:additional_requirements} (that is, solutions giving rise to a proper water wave) and $\mathcal C$ denote the connected component of $S$ to which $(\lambda_0,0,0)$ belongs. Then one of the following alternatives occurs:
	\begin{enumerate}[label=(\roman*)]
		\item $\mathcal C$ is unbounded in the sense that there exists a sequence $(\lambda_n,w_n,\phi_n)\in\mathcal C$ such that
		\begin{enumerate}[label=(\alph*)]
			\item $|\lambda_n|\to\infty$, or
			\item $\|w_n\|_{C_\per^{0,\delta}(\R)}\to\infty$ for any $\delta\in(6/7,1]$, or
			\item $\|\gamma((\phi_n+\psi^{\lambda_n})\circ H[w_n+h]^{-1})\|_{L^p(\Omega_{w_n}^*)}\to\infty$ for any $p>1$ (that is, the total vorticity in each copy of the fluid domain measured in $L^p$ is unbounded)
		\end{enumerate}
		as $n\to\infty$;
		\item $\mathcal C$ contains a point $(\lambda_1,0,0)$ with $\lambda_1\neq\lambda_0$;
		\item $\mathcal C$ contains a sequence $(\lambda_n,w_n,\phi_n)$ such that $(w_n)$ converges to some $w$ in the space $C_{0,\per,\e}^{1,\alpha}(\R)$ and the surface $S_w$ determined by $w$ via \eqref{eq:Sw} is \emph{not} of class $C^{1,\beta}$ for any $\beta>0$;
		\item $\mathcal C$ contains a point $(\lambda,w,\phi)$ such that
		\[x\mapsto(x+(\Ch w)(x),w(x)+h)\mathrm{\ is\ \textit{not}\ injective\ on\ }\R,\]
		that is, self-intersection of the surface profile occurs;
		\item $\mathcal C$ contains a point $(\lambda,w,\phi)$ such that there exists $x\in\R$ with
		\[w(x)=-h,\]
		that is, intersection of the surface profile with the flat bed occurs.
	\end{enumerate}
\end{theorem}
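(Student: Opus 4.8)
The plan is to deduce the theorem from the global bifurcation theorem of Rabinowitz (Theorem \ref{thm:Rabinowitz}). I would work on the open set of proper water waves,
\[U\coloneqq\{(\lambda,w,\phi)\in\Om:\eqref{eq:additional_requirements}\text{ holds}\}\subset\R\times X,\]
which contains every trivial point $(\lambda,0,0)$ (there $\K=1$, the parametrisation is the identity, and $0>-h$), with $F(\lambda,0,0)=0$ since $\A(\lambda,0,0)=0$ and $R(\lambda,0,0)=0$. By Lemma \ref{lma:M_prop}, $F=\mathrm{Id}-\M$ is $C^2$ with $\M_{(w,\phi)}(\cdot,0,0)$ compact and continuous in $\lambda$; and since the Crandall--Rabinowitz data at $\lambda_0$ (simple kernel, transversality) are exactly those supplied by Lemmas \ref{lma:kernel}, \ref{lma:transversality_condition} and used in Theorem \ref{thm:LocalBifurcation}, the standard link between the Crandall--Rabinowitz condition and the crossing number (see \cite{Kielhoefer}) gives that $F_{(w,\phi)}(\lambda,0,0)$ has crossing number $\pm1$, in particular odd, at $\lambda_0$. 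As $\M$ is only compact on the sets $\Om_\varepsilon$, I would write $U=\bigcup_{\varepsilon>0}U_\varepsilon$ with $U_\varepsilon\coloneqq\{(\lambda,w,\phi)\in U:(1+\Ch w')^2+w'^2>\varepsilon\}$ and apply Theorem \ref{thm:Rabinowitz} on each $U_\varepsilon$, on which $\M$ is compact; as $\varepsilon\to0$ the $U_\varepsilon$ increase to $U$ and the components $\mathcal C_\varepsilon$ of the corresponding solution sets increase, $\mathcal C$ contains each $\mathcal C_\varepsilon$ (hence the local curve of Theorem \ref{thm:LocalBifurcation}, so it is nontrivial), and distinguishing whether some $\mathcal C_\varepsilon$ is already unbounded, whether some meets a trivial point $(\lambda_1,0,0)$ with $\lambda_1\ne\lambda_0$, or whether every $\mathcal C_\varepsilon$ reaches $\partial U_\varepsilon$, one obtains: $\mathcal C$ is unbounded in $\R\times X$; or $\mathcal C$ contains such a $(\lambda_1,0,0)$; or $\mathcal C$ contains a sequence $(\lambda_n,w_n,\phi_n)$ along which $\inf_x((1+\Ch w_n')^2+w_n'^2)\to0$, or the surface parametrisation loses injectivity, or $\min_x w_n\to-h$.

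The second case is alternative (ii). In the third, I would first upgrade the convergence: if $|\lambda_n|$, $\|w_n\|_{C_\per^{0,\delta}(\R)}$, and the $L^p$-vorticity remain bounded along the sequence (otherwise alternative (i) is in force, see below), the a priori estimates below produce a subsequence with $w_n\to w$ in $C_\per^{1,\alpha}(\R)$ and $\phi_n\to\phi$ in $X$, and $(\lambda,w,\phi)$ solves $F=0$; then $\inf_x((1+\Ch w')^2+w'^2)=0$ forces, by the contrapositive of Lemma \ref{lma:ConformalMapping}(iv), that $S_w$ is not of class $C^{1,\beta}$ for any $\beta>0$ (alternative (iii)), loss of injectivity for $w$ is (iv), and $w(x)=-h$ for some $x$ is (v). For the first case I would show that unboundedness of $\mathcal C$ in $\R\times X$ together with boundedness of $|\lambda_n|$, of $\|w_n\|_{C_\per^{0,\delta}(\R)}$ for some $\delta\in(6/7,1]$, and of the $L^p$-vorticity for some $p>1$ leads to a contradiction (namely boundedness of $\|(w_n,\phi_n)\|_X$), so that (i)(a), (i)(b), or (i)(c) must occur.

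The a priori estimates needed for the last two points are the technical core, and I expect them to be the main obstacle. One would pull the $L^p$-vorticity bound back to $\Omega_h$ (Jacobian $|\nabla V_n|^2$), insert it into the elliptic problem defining $\A_n=\phi_n$ to obtain $W^{2,p}$- and hence $C^{1,\beta'}$-bounds on $\phi_n$, use Lemma \ref{lma:ConformalMapping} to control $V_n$ and $R(\lambda_n,w_n,\phi_n)$, and then recover $w_n$ from the Babenko-type relation $w_n''=(1+\Ch w_n')R(\lambda_n,w_n,\phi_n)+w_n'\Ch R(\lambda_n,w_n,\phi_n)$, closing the loop with interpolation inequalities such as $\|w_n'\|_{C^{0,\alpha}}\lesssim\|w_n\|_{C^{0,\delta}}^{1-\theta}\|w_n\|_{C^{2,\alpha}}^\theta$; it is the exponent bookkeeping in these — involving $\delta$, $\alpha$, and the two derivatives regained by $\partial_x^{-2}$ — that pins down the threshold $\delta>6/7$. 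The genuinely delicate point is that these bounds a priori involve $\sup_x((1+\Ch w_n')^2+w_n'^2)^{-1/2}$, so one must simultaneously ensure that the sequence stays away from the degenerate set; where it does not, one is thrown back into the third abstract case and hence into alternatives (iii)--(v) after the same compactness upgrade. Making the unboundedness alternative, the degeneration of the conformal parametrisation, and the interpolation estimates interlock cleanly is where the real work lies; the remainder is bookkeeping.
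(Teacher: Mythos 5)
Your proposal follows the paper's own route essentially step for step: exhaust $\Om$ by the sets $\Om_\varepsilon$, apply Theorem \ref{thm:Rabinowitz} there with the odd crossing number supplied by Lemmas \ref{lma:kernel} and \ref{lma:transversality_condition}, use Schauder/Calder\'on--Zygmund estimates together with the interpolation between $C^{0,\delta}$ and $C^{2,\beta}$ (with the exponent count indeed enforcing $\delta>6/7$) to show that failure of (i)(a)--(c) gives boundedness in $\R\times X$, and convert the degenerate-boundary case into alternative (iii) via Lemma \ref{lma:ConformalMapping}(iv). The one step you elide in (iii) is the Darboux--Picard argument (as in \cite{ConsVarva11}) showing that $H[w+h]$, built from the degenerate limit $w$ via \eqref{eq:V_explicit} under the standing hypothesis that (iv) and (v) fail, really is \emph{the} conformal parametrisation of $\Omega_w$ from Lemma \ref{lma:ConformalMapping} (so that $h$ is its conformal mean depth); without that identification, vanishing of $\SL\nabla V[w+h]$ does not yet contradict $C^{1,\beta}$ regularity of $S_w$.
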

\begin{proof}
	As was already observed in Lemma \ref{lma:M_prop}, our nonlinear operator $F$ is of class $C^2$ and admits the form \enquote{identity $+$ compact} on each $\Om_\varepsilon$, $\varepsilon>0$. Moreover, it is well-known that $F_{(w,\phi)}(\lambda,w,\phi)$ has an odd crossing number at $(\lambda_0,0,0)$ provided $F_{(w,\phi)}(\lambda_0,0,0)$ is a Fredholm operator with index zero and one-dimensional kernel, and the transversality condition holds. These properties, in turn, are consequences of the hypotheses of the theorem in view of Lemmas \ref{lma:kernel} and \ref{lma:transversality_condition} since $F_{(w,\phi)}(\lambda_0,0,0)$ coincides with $\LL(\lambda_0)$ up to an isomorphism. For each $\varepsilon>0$, we can thus apply Theorem \ref{thm:Rabinowitz} with $U$ chosen to be the interior of $\Om_\varepsilon$. Suppose now that neither alternative (iv) nor (v) is valid. Then, on each $\Om_\varepsilon$, $\mathcal C$ coincides with its counterpart obtained from Theorem \ref{thm:Rabinowitz}. Since $\varepsilon>0$ is arbitrary and $\Om=\bigcup_{\varepsilon>0}\Om_\varepsilon$, it is evident that $\mathcal C$ has to contain a point in $\partial\Om$ whenever $\mathcal C$ is bounded in $\R\times X$ and (ii) fails to hold.
	
	Let us further consider the alternative that $\mathcal C$ contains a point in $\partial\Om$. Then, as stated in (iii), there exists a sequence $(\lambda_n,w_n,\phi_n)$ such that $(w_n)$ converges to some $w$ in $C_{0,\per,\e}^{1,\alpha}(\R)$ and 
	\begin{align}\label{eq:alternative(iii)}
	1+(\Ch w')(x)=w'(x)=0\text{ for some }x\in\R.
	\end{align}
	(We now essentially follow the proof of the reverse direction of \cite[Thm. 2.2]{ConsVarva11}.) Then $H[w+h]=U[w+h]+iV[w+h]$ is holomorphic, and, since $w\in C_\per^{1,\alpha}(\R)$, moreover $U[w+h],V[w+h]\in C^{1,\alpha}(\overline{\Omega_h})$ by \cite[Lem. 2.1]{ConsVarva11}. Additionally assuming that \eqref{eq:additional_requirements} holds for $w$ (otherwise we (iv) or (v) would be valid), an application of the Darboux--Picard Theorem shows that $H[w+h]$ is a conformal mapping from $\Omega_h$ onto $\Omega_w$, which admits an extension as a homeomorphism between the closures of	these domains, with $\R\times\{0\}$ being mapped onto $S_w$ and $\R\times\{-h\}$ being mapped onto $\R\times\{0\}$. Since this conformal mapping is unique up to translations in the variable $x$ (which of course leave regularity properties invariant) by Lemma \ref{lma:ConformalMapping}(ii), the surface cannot be of class $C^{1,\beta}$ for any $\beta>0$ in view of \eqref{eq:alternative(iii)} and Lemma \ref{lma:ConformalMapping}(iv) combined with \eqref{eq:length_of_nablaV_on_top}.
	
	It remains to investigate alternative (i) further. In order to show that it can be stated as above, we have to show that, in view of alternative (i) of Theorem \ref{thm:Rabinowitz}, $\mathcal C$ is bounded in $\R\times X$ if (a)--(c) (and also the other alternatives (ii),(iii)) fail to hold. In particular, we shall assume that $(\lambda,w,\phi)\in\mathcal C$, the $w$'s are bounded in $C_\per^{0,\delta}(\R)$ for some $\delta\in(6/7,1)$, $\|\gamma((\phi+\psi^{\lambda})\circ H[w+h]^{-1})\|_{L^p(\Omega_w^*)}$ remains bounded for some $p>1$, and $\K(w)\ge\varepsilon$ for some $\varepsilon>0$. Since the  $L^\infty$-norms of the $w$'s are bounded, the Lebesgue measure of $\Omega_w^*$ is bounded so that we can assume without loss of generality that $p$ is close enough to $1$ such that $\eta\coloneqq2-2/p<\min\{1/(1-\delta)-7,\alpha\}$. We now consider $\delta$, $\varepsilon$, and $\eta$ fixed, and let $\beta\in(0,\eta]$, $r,t\in(1,\infty)$ to be chosen later. In the following, $c>0$ denotes some generic constants that may depend on $\delta$, $\varepsilon$, $\eta$, $\beta$, $r$, and $t$, and can change from step to step, and we will frequently make use of Young's inequality $|ab|\le a^q/q+b^{q'}/q'$ for $a,b\in\R$ and $q\in(1,\infty)$, where $1/q+1/q'=1$; we let $r'$ and $t'$ such that $1/r+1/r'=1$ and $1/t+1/t'=1$. First note that
	\[|\gamma(s)|\le c|s|+c,\quad s\in\R,\]
	and
	\[\|V[w+h]\|_{C_\per^{1,\beta}(\overline{\Omega_h})}\le c\|w\|_{C_\per^{1,\beta}(\R)}+c\]
	similar to \eqref{eq:est_V}. Now, along $\mathcal C$ we have $\phi=\A(\lambda,w,\phi)$ and thus
	\begin{align*}
		\|\phi\|_{C_\per^{0,\eta}(\overline{\Omega_h})\cap H_\per^1(\Omega_h)}&\le c\|\phi\|_{W^{2,p}(\Omega_h^*)}\le c\|-\gamma(\phi+\psi^\lambda)|\nabla V[w+h]|^2+\gamma(\psi^\lambda)\|_{L^p(\Omega_h^*)}\\
		&\le c\|\gamma(\phi+\psi^\lambda)|\nabla V[w+h]|^{1/p}\|_{L^p(\Omega_h^*)}\|\nabla V[w+h]\|_\infty^{1+\eta/2}+c|\lambda|+c\\
		&\le c\|\gamma((\phi+\psi^{\lambda})\circ H[w+h]^{-1})\|_{L^p(\Omega_w^*)}\left(\|w\|_{C_\per^{1,\beta}(\R)}^{1+\eta/2}+1\right)+c|\lambda|+c\\
		&\le c\|\gamma((\phi+\psi^{\lambda})\circ H[w+h]^{-1})\|_{L^p(\Omega_w^*)}^{r'}+c\|w\|_{C_\per^{1,\beta}(\R)}^{r(1+\eta/2)}+c|\lambda|+c
	\end{align*}
	after using Sobolev's embedding, the Calderón-Zygmund inequality (see \cite[Chapter 9]{GilbargTrudinger}; notice that on the right-hand side the term $\|\phi\|_{L^p(\Omega_h^*)}$ can be left out because of unique solvability of the Dirichlet problem associated to $\Delta$), $2-1/p=1+\eta/2$, the Schauder estimate for $V[w+h]$, and a change of variables via $H[w+h]$. Therefore, similar to \eqref{eq:est_R}, \eqref{eq:est_M1} (but a bit refined), we infer the Schauder estimates 
	\begin{align}\label{eq:est_phi}
		&\|\phi\|_{C_\per^{2,\beta}(\overline{\Omega_h})}\le c(\|\phi\|_{C_\per^{0,\beta}(\overline{\Omega_h})}+|\lambda|+1)\|V[w+h]\|_{C_\per^{1,\beta}(\overline{\Omega_h})}^2+c|\lambda|+c\nonumber\\
		&\le c\|\phi\|_{C_\per^{0,\beta}(\overline{\Omega_h})}^{\frac{2}{r(1+\eta/2)}+1}+c\|V[w+h]\|_{C_\per^{1,\beta}(\overline{\Omega_h})}^{2+r(1+\eta/2)}+c|\lambda|^{\frac{2}{r(1+\eta/2)}+1}+c\nonumber\\
		&\le c\|\gamma((\phi+\psi^{\lambda})\circ H[w+h]^{-1})\|_{L^p(\Omega_w^*)}^c+c\|w\|_{C_\per^{1,\beta}(\R)}^{2+r(1+\eta/2)}+c|\lambda|^c+c,
	\end{align}
	then, using the global Lipschitz continuity of $|\cdot|$ on $\R^2$ and $s\mapsto 1/s$ on $[\varepsilon,\infty)$ for the terms involving $\K(w)$,
	\begin{align*}
		&\|R(\lambda,w,\phi)\|_{C_\per^{0,\beta}(\R)}\\
		&\le c(\|w\|_{C_\per^{1,\beta}(\R)}+1)\left(\|\phi\|_{C_\per^{1,\beta}(\overline{\Omega_h})}^2+\lambda^2+\|w\|_{C_\per^{0,\beta}(\R)}\right)\\
		&\le c\|w\|_{C_\per^{1,\beta}(\R)}^{5+r(2+\eta)}+c\|\phi\|_{C_\per^{1,\beta}(\overline{\Omega_h})}^{\frac{2}{4+r(2+\eta)}+2}+c|\lambda|^{\frac{2}{4+r(2+\eta)}+2}+c\\
		&\le c\|\gamma((\phi+\psi^{\lambda})\circ H[w+h]^{-1})\|_{L^p(\Omega_w^*)}^c+c\|w\|_{C_\per^{1,\beta}(\R)}^{5+r(2+\eta)}+c|\lambda|^c+c,
	\end{align*}
	and finally
	\begin{align*}
		&\|\M_1(\lambda,w,\phi)\|_{C_\per^{2,\beta}(\R)}\le c(1+\|w\|_{C_\per^{1,\beta}(\R)})\|R(\lambda,w,\phi)\|_{C_\per^{0,\beta}(\R)}\\
		&\le c\|w\|_{C_\per^{1,\beta}(\R)}^{6+r(2+\eta)}+c\|R(\lambda,w,\phi)\|_{C_\per^{0,\beta}(\R)}^{\frac{1}{5+r(2+\eta)}+1}+c\\
		&\le c\|\gamma((\phi+\psi^{\lambda})\circ H[w+h]^{-1})\|_{L^p(\Omega_w^*)}^c+c\|w\|_{C_\per^{1,\beta}(\R)}^{6+r(2+\eta)}+c|\lambda|^c+c.
	\end{align*}
	Therefore and since we have $w=\M_1(\lambda,w,\phi)$ along $\mathcal C$,
	\begin{align*}
		\|w\|_{C_\per^{2,\beta}(\R)}\le c\|\gamma((\phi+\psi^{\lambda})\circ H[w+h]^{-1})\|_{L^p(\Omega_w^*)}^c+c\|w\|_{C_\per^{1,\beta}(\R)}^{6+r(2+\eta)}+c|\lambda|^c+c.
	\end{align*}
	By interpolation we have
	\[c\|w\|_{C_\per^{1,\beta}(\R)}^{6+r(2+\eta)}\le c\|w\|_{C_\per^{2,\beta}(\R)}^{(6+r(2+\eta))\frac{1+\beta-\delta}{2+\beta-\delta}}\|w\|_{C_\per^{0,\delta}(\R)}^{\frac{6+r(2+\eta)}{2+\beta-\delta}}\le\frac12\|w\|_{C_\per^{2,\beta}(\R)}^{t(6+r(2+\eta))\frac{1+\beta-\delta}{2+\beta-\delta}}+c\|w\|_{C_\per^{0,\delta}(\R)}^{t'\frac{6+r(2+\eta)}{2+\beta-\delta}}\]
	due to Young's inequality with epsilon. We now choose $\beta$ close enough to $0$ and $r,t$ close enough to $1$ such that
	\[t(6+r(2+\eta))\frac{1+\beta-\delta}{2+\beta-\delta}=1.\]
	This is possible since
	\[(6+(2+\eta))\frac{1-\delta}{2-\delta}<1\quad\Leftrightarrow\quad\eta<\frac{1}{1-\delta}-7.\]
	Hence,
	\begin{align*}
		\frac12\|w\|_{C_\per^{2,\beta}(\R)}\le c\|\gamma((\phi+\psi^{\lambda})\circ H[w+h]^{-1})\|_{L^p(\Omega_w^*)}^c+c\|w\|_{C_\per^{0,\delta}(\R)}^{t'\frac{6+r(2+\eta)}{2+\beta-\delta}}+c|\lambda|^c+c.
	\end{align*}
	This and inserting this inequality in \eqref{eq:est_phi} completes the proof in view of the obvious estimate $\|(w,\phi)\|_X\le c(\|w\|_{C_\per^{2,\beta}(\R)}+\|\phi\|_{C_\per^{2,\beta}(\overline{\Omega_h})})$.
\end{proof}
\begin{remark}
	If, for example, $\gamma$ is bounded, the theorem remains obviously valid if alternative (i)(c) is left out. 
\end{remark}
\begin{remark}
	In the case of pure gravity waves, it is sometimes possible to get rid of an alternative like (ii) above; cf. \cite{ConsStr04,ConsStrVarv16}, for example. The argument to eliminate such an outcome is typically based on a maximum principle argument, and this type of argument appears to be unavailable when capillary effects are taken into account.
\end{remark}

\begin{proposition}
	In Theorem \ref{thm:GlobalBifurcation} the alternative (iii) can be replaced by
	\begin{enumerate}
		\item[(iii')]
		\begin{enumerate}[label=(\alph*)]
			\item $\inf_{(\lambda,w,\phi)\in\mathcal C}\min_{\R}\SL|\nabla(\phi+\psi^\lambda)|=0$, or
			\item $\sup_{(\lambda,w,\phi)\in\mathcal C}\max_{\R}\kappa[w]=\infty$ (that is, the maximal mean curvature of the surface profile is unbounded from above), or
			\item $\sup_{(\lambda,w,\phi)\in\mathcal C}Q(\lambda,w,\phi)=\infty$ (that is, the free energy at the surface is unbounded from above).
		\end{enumerate}
	\end{enumerate}
\end{proposition}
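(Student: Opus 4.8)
The plan is to show that, under the hypotheses of Theorem~\ref{thm:GlobalBifurcation}, whenever the alternatives (i), (ii), (iv), (v) all fail, one of the three alternatives in (iii') must hold; this is exactly what is needed to replace (iii) by (iii'). So assume (i), (ii), (iv), (v) all fail. As established in the proof of Theorem~\ref{thm:GlobalBifurcation}, the component $\mathcal C$ is then bounded in $\R\times X$ (otherwise, by the a priori estimates in that proof, one of (i)(a)--(c) would hold) and must contain a point of $\partial\Om$, so there is a sequence of genuine nontrivial solutions $(\lambda_n,w_n,\phi_n)\in\mathcal C$ with $w_n\to w$ in $C_{0,\per,\e}^{1,\alpha}(\R)$ and $\K(w)(x_0)=0$ for some $x_0\in\R$; in particular $\K(w_n)(x_0)>0$ for each $n$ but $\K(w_n)(x_0)\to\K(w)(x_0)=0$, while $w_n(x_0)\to w(x_0)$ and $\lambda_n$ stays bounded.

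The analytic input is the Bernoulli equation \eqref{eq:OriginalEquations_Bernoulli}. Along $\mathcal C$ one has $\phi=\A(\lambda,w,\phi)$, so by Lemma~\ref{lma:Reformulation} together with the identities \eqref{eq:OriginalBernoulliFlat} and \eqref{eq:length_of_nablaV_on_top} every solution $(\lambda,w,\phi)\in\mathcal C$ satisfies, pointwise on $\R$,
\[
\sigma\kappa[w]=\frac{\SL|\nabla(\phi+\psi^\lambda)|^2}{2\K(w)}+\K(w)\bigl(gw-Q(\lambda,w,\phi)\bigr).
\]
Now suppose, for a contradiction, that (iii')(a), (iii')(b), (iii')(c) all fail, so that $\delta_0:=\inf_{\mathcal C}\min_\R\SL|\nabla(\phi+\psi^\lambda)|>0$, $M:=\sup_{\mathcal C}\max_\R\kappa[w]<\infty$, and $\mathcal Q:=\sup_{\mathcal C}Q(\lambda,w,\phi)<\infty$. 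From the definition of $Q$ together with $\K(w)>0$ and $g>0$ one gets $Q(\lambda,w,\phi)\ge g\min_\R w$, so, $\mathcal C$ being bounded in $X$ and hence in $C^0$, the numbers $Q(\lambda_n,w_n,\phi_n)$ stay bounded. Evaluating the displayed identity at $x_0$ along the sequence yields
\[
\sigma M\ge\sigma\kappa[w_n](x_0)\ge\frac{\delta_0^2}{2\K(w_n)(x_0)}-\K(w_n)(x_0)\,\bigl|gw_n(x_0)-Q(\lambda_n,w_n,\phi_n)\bigr|,
\]
and since $\K(w_n)(x_0)\to0$ while $w_n(x_0)$ and $Q(\lambda_n,w_n,\phi_n)$ stay bounded, the right-hand side tends to $+\infty$, a contradiction. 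Hence one of (iii')(a)--(c) holds, which proves the proposition.

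A few routine points must still be filled in. First, the infima and suprema defining $\delta_0$, $M$, $\mathcal Q$ should be taken over the genuine nontrivial solutions in $\mathcal C$ rather than over all of its closure; this is handled by continuity on $\Om$ of the functionals involved---using elliptic regularity, $\phi=\A(\lambda,w,\phi)\in C^{2,\alpha}$, to make sense of $\SL\nabla(\phi+\psi^\lambda)$ and of $\kappa[w]$---together with density. Second, the sequence produced by the proof of Theorem~\ref{thm:GlobalBifurcation} may, after a small perturbation preserving $w_n\to w$ in $C^{1,\alpha}$, be taken to consist of such genuine solutions, so that the displayed Bernoulli identity is available at $x_0$ for each $n$. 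I expect the only mild obstacle to be this closure/continuity bookkeeping and the verification that every term in the identity is finite along the sequence---which is precisely where boundedness of $\mathcal C$ in $\R\times X$, forced by the failure of alternative (i), enters; the mechanism of the argument itself is simply that a surface speed bounded away from zero together with a conformal factor $\K(w)$ degenerating to zero forces the mean curvature to blow up through the Bernoulli relation.
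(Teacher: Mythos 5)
Your approach is essentially the same as the paper's: rearrange the surface Bernoulli equation (via \eqref{eq:OriginalBernoulliFlat} and \eqref{eq:length_of_nablaV_on_top}) so that, when $\K(w)\to 0$ along a sequence in $\mathcal C$ produced by alternative (iii), one is forced into (iii')(a), (iii')(b), (iii')(c), or $w$ unbounded below, the last being absorbed by (v). One algebraic slip worth flagging: the correct rearrangement is $\sigma\kappa[w]=\SL|\nabla(\phi+\psi^\lambda)|^2/(2\K(w)^2)+gw-Q(\lambda,w,\phi)$, so your displayed identity is off by an overall factor of $\K(w)$ on the left (what you wrote equals $\sigma\kappa[w]\K(w)$); since the dominant term still blows up at the degenerating point while $gw_n(x_0)$ and $Q_n$ stay bounded by failure of (i), (v), and (iii')(c), the contradiction goes through unchanged.
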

\begin{proof}
	Recalling the original Bernoulli equation (cf. \eqref{eq:OriginalBernoulliFlat}, \eqref{eq:length_of_nablaV_on_top}) we have
	\[\frac{\SL|\nabla(\phi+\psi^\lambda)|^2}{2((1+\Ch w')^2+w'^2)}-\sigma\kappa[w]+gw=Q\]
	for $(\lambda,w,\phi)\in\mathcal C$. Thus, if (iii) holds, that is, $\inf_{(\lambda,w,\phi)\in\mathcal C}\min_{\R}(1+\Ch w')^2+w'^2=0$ in particular, then (iii')(a), (iii')(b), or (iii')(c) occurs or $w$ is unbounded from below, the latter of which, however, is already absorbed in alternative (v) (or, in other words, is prevented by the bed).
\end{proof}

\paragraph{Acknowledgement} This project has received funding from the European Research Council (ERC) under the European Union’s Horizon 2020 research and innovation programme (grant agreement no 678698).

\bibliographystyle{amsplain}
\bibliography{bib_2DBifurcation}

\end{document}